\newcommand{\C}{\mathbb{C} }
\newcommand{\R}{\mathbb{R} }
\newcommand{\Z}{\mathbb{Z} }
\newcommand{\T}{\mathbb{T} }
\newcommand{\N}{\mathbb{N} }
\newcommand{\Zmod}[1]{\Z/#1\Z}
\newcommand{\unitsmod}[1]{(\Z/#1\Z)^\times}
\newcommand{\latticepoint}{m}
\newcommand{\toruspoint}{\xi}
\newcommand{\latticesubset}{F}
\newcommand{\fxn}{f}
\newcommand{\lpnorm}[2]{\left\| #2 \right\|_{\ell^{#1}(\Z^\dimension)}}
\newcommand{\disup}[3]{\sup_{#1 \leq #2 < 2#1} #3}
\newcommand{\inparentheses}[1]{\left( #1 \right)}
\newcommand{\inbrackets}[1]{\left[ #1 \right]}
\newcommand{\inbraces}[1]{\left\{ #1 \right\}}
\newcommand{\sizeof}{\#}
\newcommand{\absolutevalueof}[1]{\left\lvert #1 \right\rvert}
\newcommand{\eof}[1]{e \left({#1} \right)} 
\newcommand{\arithmeticFT}[1]{\widehat{#1}}
\newcommand{\latticeFT}[1]{\widehat{#1}}
\newcommand{\torusFT}[1]{\widehat{#1}}
\newcommand{\contFT}[1]{\widetilde{#1}}
\newcommand{\indicator}[1]{\mathbf{1}_{#1}}
\newcommand{\convolvedwith}{*}
\newcommand{\dimension}{d}
\newcommand{\degree}{k}
\newcommand{\unit}{a}
\newcommand{\modulus}{q}
\newcommand{\bigmodulus}{Q}
\newcommand{\Fareyfraction}{\unit/\modulus}
\newcommand{\majorarcs}{\mathfrak{M}}
\newcommand{\minorarcs}{\mathfrak{m}}
\newcommand{\twistedGausssum}[1]{G(\unit,\modulus,#1)}
\newcommand{\acceptableradii}{\mathcal{R}_{\degree,\dimension}} 
\newcommand{\numberoflatticepoints}{N_{\degree,\dimension}}
\newcommand{\hypothesis}[1]{H_{\degree}\left(#1 \right)}
\newcommand{\meanvaluehypothesis}[1]{MVH_{\degree}\left(#1 \right)}
\newcommand{\powerloss}{\theta} 
\newcommand{\singlesavings}{\omega} 
\newcommand{\dyadicsavings}{\nu} 
\newcommand{\error}{{E}_{\radius}}
\newcommand{\HLmultiplier}{\arithmeticFT{C_\radius}}
\newcommand{\HLop}{C_{\radius}}
\newcommand{\fracHLmultiplier}{\arithmeticFT{C_{\radius}^{\Fareyfraction}}}
\newcommand{\lowfracHLmultiplier}{\arithmeticFT{C_{\radius}^{\Fareyfraction, low}}}
\newcommand{\highfracHLmultiplier}{\arithmeticFT{C_{\radius}^{\Fareyfraction, high}}}
\newcommand{\fracHLop}{C_{\radius}^{\Fareyfraction}}
\newcommand{\lowfracHLop}{C_{\radius}^{\Fareyfraction, low}}
\newcommand{\highfracHLop}{C_{\radius}^{\Fareyfraction, high}}
\newcommand{\lowfracHLkernel}{K_{\radius}^{\Fareyfraction, low}}
\newcommand{\lowHLop}{C_{\radius}^{low}}
\newcommand{\highHLop}{C_{\radius}^{high}}
\newcommand{\dyadicradius}{R}
\newcommand{\radius}{r}
\newcommand{\bigradius}{R}
\newcommand{\radii}{\mathcal{R}}
\newcommand{\altitude}{\alpha}
\newcommand{\arithmetichigherorderspheremeasure}{\sigma_\radius}
\newcommand{\conthigherordersphere}{\mathcal{S}_{\radius}^{\degree,\dimension}} 
\newcommand{\conthigherorderspheremeasure}{d\sigma_\radius}
\newcommand{\surfacemeasure}{\sigma}
\newcommand{\operatornorm}[2]{\left\Arrowvert #2 \right\Arrowvert_{#1}}
\newcommand{\ellpoperatornorm}[1]{\operatornorm{\ell^p(\Z^\dimension) \to \ell^p(\Z^\dimension)}{#1}}
\newcommand{\elltwooperatornorm}[1]{\operatornorm{\ell^2(\Z^\dimension) \to \ell^2(\Z^\dimension)}{#1}}
\newcommand*{\discreteHLmaxop}{M_*}
\newcommand{\averagefxn}{A_{\radius}\fxn} 
\newcommand{\avgop}{A_{\radius}} 
\newcommand{\maxfxn}{M_\radii \fxn}
\newcommand{\maxop}{M_\radii}
\newcommand{\fullmaxop}{M_{\acceptableradii}}
\newcommand*{\setof}[1]{\left\{#1\right\}}
\newcommand*{\density}{\delta}
\DeclareMathOperator{\Dil}{Dil}
\newcommand*{\dilateby}[1]{\Dil_{#1}}
\newcommand*{\bigfrequencydilation}{\Delta}
\newcommand{\arithmeticsphere}[1]{S^{\degree,\dimension}_{#1}} 
\newcommand{\arithmetichigherordersphere}{S^{\degree,\dimension}_{\radius}} 
\newcommand{\moment}{s}
\newtheorem{proposition}{Proposition}[section]
\newtheorem{lemma}{Lemma}[section]
\newtheorem{theorem}{Theorem}
\newtheorem{corollary}{Corollary}
\newtheorem*{MSW}{Magyar--Stein--Wainger}{\bf}{\it}
{\bf}{\it}
\newtheorem*{Ionescu}{Ionescu}{\bf}{\it}
\newtheorem*{Wooley}{Wooley}{\bf}{\it}
\newtheorem*{Steckin}{Ste{\v{c}}kin's estimate}{\bf}{\it}
\newtheorem*{approximationlemma}{The Approximation Formula}{\bf}{\it}
\newtheorem*{SupHypothesis}{Sup Hypothesis $\hypothesis{\powerloss}$}{\bf}{\it}
\newtheorem*{MeanValueHypothesis}{Mean Value Hypothesis $\meanvaluehypothesis{\dimension}$}{\bf}{\it} 
\theoremstyle{definition}
\newtheorem{definition}{Definition}[section]
\theoremstyle{remark}
\newtheorem{remark}{Remark}[section]
\title[Estimates for discrete k-spherical maximal functions]
{ 
Restricted weak-type endpoint estimates for discrete k-spherical maximal functions}
\author{
Kevin Hughes 
}
\address{
Kevin Hughes\\
School of Mathematics\\
The University of Edinburgh\\
EDINBURGH\\
Scotland, UK\\
}
\begin{document}

\begin{abstract}
In this paper, we use the Approximation Formula for the Fourier transform of the solution set of lattice points on k-spheres and methods of Bourgain and Ionescu to refine the \(\ell^p(\mathbb{Z}^d)\)-boundedness results for discrete k-spherical maximal functions to a restricted weak-type result at the endpoint. 
Moreover we introduce a novel Approximation Formula for a single average; this allows us to improve our bounds for discrete k-spherical maximal functions along sparse subsequences of radii by exploiting recent progress of Wooley on the Vinogradov mean value conjectures. 
In particular we have improved bounds for lacunary discrete k-spherical maximal functions when \(k \geq 3\). 
We introduce a density-parameter, which may be viewed as a discrete version of Minkowski dimension used in related works on the conitnuous phenomena, to prove our results for sparse averages. 
%
\end{abstract}

\maketitle 
\tableofcontents 

\newpage
\section{Introduction} 

Fix the degree \(\degree \geq 2\) and dimension \(\dimension\), positive integers such that $\dimension > \degree$. As in \cite{Hughes_Vinogradov}, we define the \emph{arithmetic $\degree$-sphere of radius $\radius$ in $\dimension$ dimensions} as 
\begin{equation}
\arithmetichigherordersphere := \{\latticepoint \in \Z^\dimension : \sum_{i=1}^{\dimension} |\latticepoint_i|^\degree=r^\degree\} 
. 
\end{equation}
The $\degree$-sphere of radius $\radius$ contains $\numberoflatticepoints(\radius) = \# \arithmetichigherordersphere$ lattice points, $\arithmetichigherordersphere$ is possibly non-empty only when $\radius^\degree \in \N$. We denote the set of positive radii $\radius$ such that $\arithmetichigherordersphere$ is not empty by $\acceptableradii$. 
For a function $\fxn : \Z^\dimension \to \C$ and $\radius \in \acceptableradii$, we introduce the \emph{$\degree$-spherical averages}, 
\begin{equation}
\averagefxn(x) 
= \frac{1}{\numberoflatticepoints(\radius)} \sum_{y \in \arithmetichigherordersphere} \fxn(x-y) 
\end{equation} 
for each \(x \in \Z^\dimension\). 
And for a subsequence $\radii \subset \acceptableradii$, we introduce the maximal function over $\radii$ as 
\begin{equation}\label{eq:maxop_definition}
\maxfxn 
= \sup_{\radius \in \radii} \absolutevalueof{\averagefxn} 
. 
\end{equation}

These maximal functions are the arithmetic analogues of continuous maximal functions over $\degree$-spheres in Euclidean space. In the continuous setting, maximal functions associated to compact convex hypersurfaces are bounded on a range of $L^p(\R^\dimension)$ spaces depending on the geometry of the hypersurface and dimensional properties of the collection of radii -- see \cite{Stein_spherical}, \cite{Bourgain_circular}, \cite{BNW}, \cite{DRdF_lacunary}, \cite{STW_endpoint_spherical}, \cite{STW_pointwise_lacunary} and \cite{SteinHA}. 
%
%
In particular we were motivated by the results of \cite{Calderon_lacunary_spherical}, \cite{Coifman_Weiss_lacunary_spherical}, \cite{Seeger_Wainger_Wright} and \cite{Duoandikoetxea_Vargas}. 

\subsection{Previous results}
Motivated by this Euclidean phenomenon, in \cite{Magyar_dyadic}, Magyar initiated the study of arithmetic $\degree$-spherical maximal functions and proved that the dyadic versions of the maximal operator in \eqref{eq:maxop_definition} are uniformly bounded for a range of $\ell^p(\Z^\dimension)$ spaces depending on the degree and dimension. 
%
%
For continuous maximal functions, there is an intrinsic dilation invariance that allows one to deduce that the boundedness of the dyadic maximal operator is equivalent to the full maximal function by the use of Littlewood--Paley theory. However, this argument fails in the arithmetic setting as $\Z^\dimension$ lacks dilation invariance, and a new idea was needed to understand the full maximal function. Building on Magyar's work, Magyar--Stein--Wainger \cite{MSW_spherical} studied the full maximal function for degree $\degree = 2$ proving the arithmetic analogue of Stein's spherical maximal theorem of  \cite{Stein_spherical}. 
\begin{MSW}
Let $\degree=2$ and $\dimension \geq 5$, then $\fullmaxop$ is bounded on $\ell^p(\Z^\dimension)$ for $p>\frac{\dimension}{\dimension-2}$.
\end{MSW}
\noindent
Analogous to the Bourgain's restricted weak-type result for the continuous spherical maximal function in 3 or more dimensions, see \cite{Bourgain}, Ionescu improved the Magyar--Stein--Wainger result by proving the restricted weak-type result at the endpoint $p=\frac{\dimension}{\dimension-2}$ in \cite{Ionescu_spherical}. 
\begin{Ionescu}
If $\degree=2$ and $\dimension \geq 5$, then $\fullmaxop$ is bounded from $\ell^{\frac{\dimension}{\dimension-2},1}_{rest}(\Z^\dimension)$ to $\ell^{\frac{\dimension}{\dimension-2},\infty}(\Z^\dimension)$.
\end{Ionescu}
%

\noindent 
These results are sharp, except possibly for removing the restricted assumption in Ionescu's result. 
%

In another direction, \cite{Magyar_ergodic} extended the results of Magyar--Stein--Wainger to positive definite, nondegenerate, homogeneous integral forms to prove boundedness of the corresponding maximal operator on $\ell^2(\Z^\dimension)$ and pointwise convergence of their ergodic averages when $\dimension>(\degree-1)2^\degree$; this includes the family of $\degree$-spheres considered here. 
%
%
The motivating problem for this paper is to sharpen Magyar's results by obtaining the full range of \(\ell^p(\Z^\dimension)\)-spaces on which \(\fullmaxop\) is bounded. 
Using refined estimates for exponential sums coming from recent progress in Waring's problem, see \cite{Wooley_efficient_congruencing1} or \cite{Wooley_best_in_June2014}, and oscillatory integrals, see \cite{BNW}, the author improved Magyar's results in \cite{Hughes_Vinogradov} for the family of $\degree$-spheres to 
\setcounter{theorem}{-1}
\begin{theorem}\label{theorem:Hughes_thesis_bound}
If $\degree \geq 3$ and $\dimension>2\degree^2(\degree-1)$, then $A_{\radii_{\degree,\dimension}}$ is bounded on $\ell^p(\Z^\dimension)$ for $p>\frac{\dimension}{\dimension - \degree^2(\degree-1)}$.
\end{theorem}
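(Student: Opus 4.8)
The plan is to prove this by running the Hardy--Littlewood circle method for the averaging multipliers $\arithmeticFT{\arithmeticspheremeasure{\radius}}$, following the strategy of Magyar--Stein--Wainger, with the continuous $\degree$-spherical estimates of \cite{BNW} and the Weyl-type bounds of Wooley supplying the analytic and arithmetic inputs. First I would invoke the Approximation Formula: for each admissible radius $\radius$, using a Farey dissection of $\T$ at level $\radius^{\degree-1}$, one writes $\avgop=\sum_{\modulus=1}^{\infty}\HLop^{\modulus}+\error$, where $\HLop^{\modulus}=\sum_{\unit\bmod\modulus,\,(\unit,\modulus)=1}\fracHLop$, the multiplier $\arithmeticFT{\fracHLop}$ is supported in an $O\big((\modulus\radius)^{-1}\big)$-neighbourhood of $\unit/\modulus$ and there equals a Gauss-sum coefficient $\modulus^{-\dimension}\,\Gausssum$ (of modulus $\lesssim_\varepsilon\modulus^{-\dimension/\degree+\varepsilon}$, by the classical bound $|\Gausssum|\lesssim_\varepsilon\modulus^{\dimension(1-1/\degree)+\varepsilon}$) times a smooth cutoff times $\contFT{\conthigherorderspheremeasure}\big(\radius(\toruspoint-\unit/\modulus)\big)$ --- the Fourier transform of the surface measure of the continuous $\degree$-sphere $\conthigherordersphere$ --- while the minor-arc remainder obeys $\elltwooperatornorm{\error}\lesssim_\varepsilon\radius^{-\delta+\varepsilon}$ for some $\delta=\delta(\degree,\dimension)>0$ coming from the Weyl exponent encoded in $\hypothesis{\powerloss}$.

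\emph{Single-modulus maximal estimate, and summing.} The core is the power-saving bound, valid for $p$ in the asserted range,
\[
\Big\|\sup_{\radius\in\acceptableradii}\big|\HLop^{\modulus}\fxn\big|\Big\|_{\ell^p(\Z^\dimension)}\lesssim_\varepsilon\modulus^{-\lambda(p)+\varepsilon}\,\lpnorm{p}{\fxn},\qquad\lambda(p)>1.
\]
On $\ell^2$ this follows from the disjointness of the frequency supports of the $\fracHLop$ (so the $\modulus^{-\dimension/\degree}$ Gauss-sum gain is not lost), together with the $L^2(\R^\dimension)$-maximal bound for the continuous $\degree$-sphere passed to the discrete setting. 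For $p$ near the $L^p(\R^\dimension)$-boundedness threshold of that continuous maximal operator, one transfers the continuous theorem --- in the sharp quantitative form of \cite{BNW}, which is needed because for $\degree\ge 3$ the surface $\conthigherordersphere$ has points of vanishing Gaussian curvature, so the elementary decay of $\contFT{\conthigherorderspheremeasure}$ does not suffice --- to the sublattices $\modulus\Z^\dimension$ and estimates the arithmetic convolution factor separately; interpolating the two estimates produces $\lambda(p)>1$ precisely for $p>\frac{\dimension}{\dimension-\degree^2(\degree-1)}$, the hypothesis $\dimension>2\degree^2(\degree-1)$ ensuring that all the estimates being interpolated hold with enough room. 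Since $\lambda(p)>1$, summing over $\modulus$ converges and controls the main term $\sup_{\radius}\big|\sum_{\modulus}\HLop^{\modulus}\fxn\big|$. For the remainder I would interpolate $\elltwooperatornorm{\error}\lesssim\radius^{-\delta+\varepsilon}$ against the trivial bound $\ellpoperatornorm{\error}\lesssim 1$ to obtain a decay $\radius^{-\delta(p)}$ with $\delta(p)>0$ for every $p>1$, and then dominate $\sup_{\radius}|\error\fxn|$ by the sum over dyadic blocks of $\radius$, each of which contributes a summable factor.

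\emph{Main obstacle.} The whole difficulty sits in the single-modulus step, and specifically in taking the supremum over the \emph{entire} family $\acceptableradii$ simultaneously: since $\Z^\dimension$ carries no dilation symmetry one cannot pass to a dyadic maximal operator via Littlewood--Paley theory, so $\HLop^{\modulus}$ must be handled uniformly in $\radius$ while keeping the full $\modulus^{-\lambda(p)}$ gain --- this is exactly why one transfers the continuous maximal function over \emph{all} radii rather than running a square function in $\radius$ by hand. Pushing $\lambda(p)>1$ down to $p=\frac{\dimension}{\dimension-\degree^2(\degree-1)}$ then demands the precise $L^p(\R^\dimension)$-range for the $\degree$-sphere from \cite{BNW} (degenerate curvature) together with an interpolation that does not squander the Gauss-sum gain; balancing these against the requirement $\lambda(p)>1$ is what fixes the exponent $\degree^2(\degree-1)$ and the dimension threshold $\dimension>2\degree^2(\degree-1)$.
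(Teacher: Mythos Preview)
Your architecture is correct, but you have the roles of the main term and the error term reversed, and this is a genuine gap. The single-modulus maximal estimate for the main term --- Gauss-sum decay $\modulus^{-\dimension/\degree+\varepsilon}$ combined with the transferred continuous $\degree$-spherical maximal theorem from \cite{BNW} --- already gives $\lambda(p)>1$ for every $p>\frac{\dimension}{\dimension-\degree}$ as soon as $\dimension>2\degree+1$ (this is Lemma~\ref{lemma:maxHLop_bound} here, Lemma~8.1 of \cite{Hughes_Vinogradov}); that is a far smaller threshold than $\frac{\dimension}{\dimension-\degree^2(\degree-1)}$. The exponent $\degree^2(\degree-1)$ and the dimensional constraint $\dimension>2\degree^2(\degree-1)$ come entirely from the error term.

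Your error-term argument is where the proof breaks. You interpolate single-radius bounds $\elltwooperatornorm{\error}\lesssim\radius^{-\nu}$ and $\ellpoperatornorm{\error}\lesssim 1$ to obtain decay $\radius^{-\delta(p)}$ with $\delta(p)>0$ for all $p>1$, and then sum dyadic blocks. But a dyadic block $[\dyadicradius,2\dyadicradius)$ contains $\eqsim\dyadicradius^\degree$ admissible radii, so the passage from a single radius to the dyadic maximal function costs a factor $\dyadicradius^\degree$ on $\ell^1$ via the union bound, and no better estimate is available. One must therefore interpolate the \emph{dyadic-maximal} $\ell^2$ bound $\dyadicradius^{-\nu}$ (the Approximation Formula, \eqref{eq:error_bound}) against this $\ell^1$ growth $\dyadicradius^\degree$; the interpolated dyadic bound decays only for $p>1+\frac{\degree}{2\nu+\degree}$. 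With $\nu=\dimension\powerloss-\degree$ and Wooley's Weyl exponent $\powerloss<\frac{1}{2\degree(\degree-1)}$ from \cite{Wooley_efficient_congruencing1}, this is exactly $p>\frac{\dimension}{\dimension-\degree^2(\degree-1)}$, and $\nu>0$ is exactly $\dimension>2\degree^2(\degree-1)$. Once this bookkeeping is corrected, your direct route is that of \cite{Hughes_Vinogradov}; the present paper instead deduces Theorem~\ref{theorem:Hughes_thesis_bound} by plugging Wooley's $\powerloss$ into Theorem~\ref{theorem:main_theorem} to obtain the restricted weak-type bound at each such $p$ and then invoking Marcinkiewicz interpolation (see Section~\ref{section:connection_to_Vinogradov}).
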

%
In this paper, we refine Theorem~\ref{theorem:Hughes_thesis_bound} to a restricted weak-type endpoint result and deduce further bounds for thin sequences.

\subsection{Summary of results}

For each degree \(\degree \geq 3\), there is much room to improve Theorem~\ref{theorem:Hughes_thesis_bound} in the range of \(p\) and the range of dimension \(\dimension\). 
The expected range of dimensions is difficult to describe; we refer the reader to Conjectures~1 and 2 of \cite{Hughes_Vinogradov}. 
The range of \(p\) is easier to predict in sufficiently high dimensions. 
Assume that the dimension \(\dimension\) is sufficiently large with respect to the degree \(\degree\) so that \(\numberoflatticepoints(\radius) \eqsim \radius^{\dimension-\degree}\) for all sufficiently large \(\radius \in \acceptableradii\). 
Testing the maximal operator on the Dirac delta function, we see that the maximal operator over \(\acceptableradii\), \(\fullmaxop\), fails to be bounded on \(\ell^p(\Z^\dimension)\) for \(p \leq \frac{\dimension}{\dimension-\degree}\), but we expect that \(\fullmaxop\) is bounded on $\ell^p(\Z^\dimension)$ for $p>\frac{\dimension}{\dimension-\degree}$. 

In this paper we improve the range of \(\ell^p(\Z^\dimension)\)-boundedness for \(\maxop\), the \(\degree\)-spherical maximal function over a subset \(\radii \subset \acceptableradii\) when the `size of \(\radii\)' is small. 
Our results are phrased in terms of two hypotheses: \(\hypothesis{\powerloss}\) and \(\meanvaluehypothesis{\dimension}\) which we describe in this section.
Our first hypothesis is a bound on the supremum of exponential sums near rational points. 
\begin{SupHypothesis}
For all \(N \in \N\), suppose that there exists integers $ 1 \leq \unit < \modulus \leq N$ relatively prime such that $\absolutevalueof{t-\unit/\modulus}~\leq~\modulus^{-2}$. Then
\[ 
\sum_{n=1}^N e(t n^\degree + \xi n) 
\lesssim N (\modulus^{-1} + N^{-1} + \modulus N^{-\degree})^\powerloss 
\]
with implicit constants independent of $\xi$, $\modulus$ and $N$.
\end{SupHypothesis}
\noindent 
This hypothesis differs slightly from the hypothesis in \cite{Hughes_Vinogradov}. There, the hypothesis includes a logarithmic-loss; essentially replacing our $\hypothesis{\powerloss}$ below by $\hypothesis{\powerloss-\epsilon}$ for all $\epsilon>0$. This allows us to strengthen our results abstractly. In practice, the exponential sum bounds that we can plug into our hypothesis come with a log-loss so that we can only recover the same bounds for the full maximal function as in \cite{Hughes_Vinogradov}. 
Note that our Sup Hypothesis is better the larger we may take \(\powerloss \in (0,1) \). 

In contrast our method allows for us to obtain new results for lacunary maximal functions for $\degree \geq 3$, and more generally for maximal functions over thin subsequences of radii. 
%
%
\begin{theorem}\label{theorem:main_theorem}
Assume that the degree $\degree \geq 3$ and $\hypothesis{\powerloss}$ is true for some $0 < \powerloss < 1/2$. 
If $\radii \subset \acceptableradii$ is a subsequence with density-parameter at most $\density$, then the associated maximal function, \(\maxop : \ell^{p,1}(\Z^\dimension) \to \ell^{p,\infty}(\Z^\dimension) \) for 
\(
p = \max 
\setof{
\frac{\dimension}{\dimension-\degree}, 
1 + \frac{\density\degree}{2(\dimension-\degree[\degree+2])+\density\degree},  
1+\frac{\density}{2(\dimension \powerloss - \degree)+\density} 
}
\) 
and 
\( \dimension > \max{\{ \degree(\degree+2), \degree/\powerloss\}} \). 
\end{theorem}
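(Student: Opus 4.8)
The plan is to run the standard Magyar--Stein--Wainger circle-method machinery, but carefully tracking how the size of $\radii$ (measured by the density-parameter $\density$) enters the error-term bounds, and then to conclude via a restricted-weak-type argument of Bourgain--Ionescu type. First I would invoke the Approximation Formula for $\arithmeticFT{\arithmetichigherordersphere/\numberoflatticepoints(\radius)}$ to decompose the averaging operator $\avgop$ into a main term $\sum_{\modulus} \sum_{\unit} \fracHLop$ (a sum over Farey fractions of products of a Gauss-sum factor and a rescaled continuous spherical multiplier) plus an error operator $\error$. The main term is handled exactly as in \cite{Hughes_Vinogradov}: the Gauss-sum bounds give $\ell^2 \to \ell^2$ decay in $\modulus$, the $p=2$ theory of the continuous $\degree$-spherical maximal function (via \cite{BNW}) is transferred to $\Z^\dimension$, and interpolation with the trivial $\ell^1\to\ell^{1,\infty}$ and $\ell^\infty$ bounds gives restricted-weak-type boundedness down to $p = \frac{\dimension}{\dimension-\degree}$ for the main term. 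This contributes the first term in the maximum.

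The new content is the treatment of the error operator $\maxerror = \sup_{\radius\in\radii}|\error|$ and, crucially, the fact that we sup only over $\radii$, not all of $\acceptableradii$. Here I would use the \emph{single-average} Approximation Formula advertised in the abstract: for a \emph{fixed} radius the Gauss sums can be estimated using $\hypothesis{\powerloss}$ directly (via a Weyl-type / Vinogradov mean-value input), which is where the $\powerloss$-dependence and the hypothesis $\dimension > \degree/\powerloss$ enter. The maximal function over $\radii$ is then bounded by summing the single-average $\ell^2$ (and $\ell^p$) bounds over $\radius \in \radii$; the number of radii in $[N,2N)\cap\radii$ is controlled by the density-parameter, so a dyadic decomposition in $\radius$ produces a convergent geometric-type sum precisely when $\density$ is small relative to $\dimension\powerloss - \degree$ (respectively $\dimension - \degree(\degree+2)$), yielding the second and third exponents in the maximum. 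Concretely I would split $\error$ further into a `low' piece (small $\modulus$, handled by the continuous theory of the rescaled sphere and responsible for the $\dimension-\degree(\degree+2)$ term) and a `high' piece (large $\modulus$, handled by $\hypothesis{\powerloss}$ and responsible for the $\dimension\powerloss - \degree$ term), mirroring the $\lowfracHLop/\highfracHLop$ split set up in the macros.

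Finally, to upgrade $\ell^p$-boundedness to the restricted-weak-type endpoint, I would run the Bourgain--Ionescu argument: test against $\fxn = \indicator{E}$ for a finite set $E \subset \Z^\dimension$, split the maximal operator according to dyadic ranges of $\modulus$ (and of $\radius$), use the $\ell^2$ bounds with their $\modulus$-decay on the portions where $\modulus$ is large compared to a threshold depending on $|E|$, and use a crude $\ell^1$-type estimate (the averages are $\ell^1$-bounded) on the complementary portion, optimizing the threshold in $|E|$. The restricted hypothesis on $\fxn$ is what makes the $\ell^1$ piece affordable. I expect the main obstacle to be the bookkeeping in the double dyadic decomposition --- simultaneously over the modulus $\modulus$ and over the radius $\radius \in \radii$ --- and in particular verifying that the density-parameter hypothesis is exactly the condition under which the resulting two-parameter sum converges with the claimed exponent; getting the three competing terms in the maximum to come out with the stated constants, rather than something weaker, will require the sharp form of the single-average formula and careful use of $\powerloss < 1/2$.
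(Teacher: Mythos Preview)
Your proposal is in the right orbit but misplaces the role of the density parameter, and this leads to a genuine structural gap. The paper does \emph{not} feed $\density$ into the error term by summing single-radius $\ell^2$ bounds over $\radii$; and the ``single-average Approximation Formula'' you invoke is the one that requires the Mean Value Hypothesis $\meanvaluehypothesis{\moment}$ in addition to $\hypothesis{\powerloss}$ --- it is used for Theorem~\ref{theorem:low_density_improvement_by_mean_values}, not for Theorem~\ref{theorem:main_theorem}. For Theorem~\ref{theorem:main_theorem} only the \emph{dyadic} Approximation Formula is available, and it already controls $\sup_{\dyadicradius \le \radius < 2\dyadicradius}|\error\fxn|$ for free, so after summing dyadic blocks one gets $\lpnorm{2}{\sup_{\radius>\bigradius}|\error\fxn|}\lesssim \bigradius^{-\dyadicsavings}\lpnorm{2}{\fxn}$ with $\dyadicsavings=\min\{\dimension\powerloss-\degree,\ \dimension/\degree-(\degree+2)\}$ and \emph{no} dependence on $\density$ whatsoever.

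The place where $\density$ actually enters is completely different and much simpler than your double dyadic scheme. The paper first proves, as a stand-alone result, that the main-term maximal operator $\sup_{\radius}|\HLop\fxn|$ is restricted weak-type $(\tfrac{\dimension}{\dimension-\degree},\tfrac{\dimension}{\dimension-\degree})$ via the Ionescu decomposition in $\modulus$ (this is independent of $\radii$ and $\density$ and supplies the first term in the maximum). Then, for the full operator, one splits by \emph{radius}: narrow radii $\radius\le\bigradius$ versus wide radii $\radius>\bigradius$. The narrow piece is bounded on $\ell^1$ by the union bound over at most $\lesssim\bigradius^{\density}$ radii (each $\avgop$ has $\ell^1$ norm $1$); the wide piece splits as main term (handled by the stand-alone result) plus error (handled by the $\ell^2$ bound above). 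Balancing the narrow $\ell^1$ bound $\altitude^{-1}\bigradius^{\density}|\latticesubset|$ against the wide-error $\ell^2$ bound $\altitude^{-2}\bigradius^{-2\dyadicsavings}|\latticesubset|$ by choosing $\bigradius=\altitude^{-1/(2\dyadicsavings+\density)}$ produces the exponent $1+\density/(2\dyadicsavings+\density)$, and the two values of $\dyadicsavings$ give precisely the second and third entries in the maximum. Your proposed route --- a simultaneous decomposition in $\modulus$ and $\radius$ with $\density$ entering the error estimate --- is not only more complicated; without the narrow/wide split there is no $\ell^1$ piece to balance against the error's $\ell^2$ piece, so it is unclear how you would reach any exponent below $2$ for that part of the operator.
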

\noindent
See Section~\ref{section:main_theorem} for the definition of density-parameter. 
Theorem~\ref{theorem:main_theorem} specializes to two corollaries. The first improves the $\ell^p(\Z^\dimension)$-boundedness for the full $\degree$-spherical maximal function while the second improves the range for subsequences below a critical density-parameter. 
\begin{corollary}\label{corollary:weak_vinogradov} 
Fix the degree $\degree \geq 3$. If $\hypothesis{\powerloss}$ is true for some \( \powerloss \in (0, [\degree(\degree+2)]^{-1})\), then $\fullmaxop$ is of restricted weak-type $\inparentheses{\frac{\dimension}{\dimension -\degree/2\powerloss},\frac{\dimension}{\dimension -\degree/2\powerloss}}$ for 
$\dimension > \max{\{ \degree(\degree+2), \degree/\powerloss\}}$.
\end{corollary}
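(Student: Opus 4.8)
The plan is to obtain Corollary~\ref{corollary:weak_vinogradov} as the special case of Theorem~\ref{theorem:main_theorem} in which the subsequence $\radii$ is the entire admissible set $\acceptableradii$, so that the maximal function $\maxop$ of Theorem~\ref{theorem:main_theorem} becomes $\fullmaxop$. The first thing to settle is the density-parameter of $\acceptableradii$ itself. Unwinding the definition from Section~\ref{section:main_theorem}, this amounts to controlling $\#(\acceptableradii \cap [R, 2R))$ for dyadic $R \geq 1$; since every $\radius \in \acceptableradii$ has $\radius^\degree \in \N$, we have
\[
\acceptableradii \cap [R, 2R) \subseteq \setof{ n^{1/\degree} : R^\degree \leq n < (2R)^\degree },
\]
a set of cardinality $\lesssim R^\degree$, so $\acceptableradii$ has density-parameter at most $\degree$ (and in fact equal to $\degree$, though only the upper bound is used). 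Hence Theorem~\ref{theorem:main_theorem} applies with $\density = \degree$.

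The rest is bookkeeping. Substituting $\density = \degree$ into the exponent of Theorem~\ref{theorem:main_theorem} gives
\[
p = \max \setof{ \frac{\dimension}{\dimension - \degree},\; 1 + \frac{\degree^2}{2(\dimension - \degree(\degree+2)) + \degree^2},\; 1 + \frac{\degree}{2(\dimension\powerloss - \degree) + \degree} }.
\]
The third candidate simplifies to $1 + \frac{\degree}{2\dimension\powerloss - \degree} = \frac{2\dimension\powerloss}{2\dimension\powerloss - \degree} = \frac{\dimension}{\dimension - \degree/2\powerloss}$, which is exactly the exponent asserted in the corollary; so it remains only to check that it is the largest of the three under the standing hypotheses $\powerloss < [\degree(\degree+2)]^{-1}$ and $\dimension > \max\setof{\degree(\degree+2), \degree/\powerloss}$. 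Under these hypotheses every denominator below is positive, so the comparison with $\frac{\dimension}{\dimension-\degree}$ reduces after clearing denominators to $\powerloss \leq 1/2$, which holds since $[\degree(\degree+2)]^{-1} \leq 1/15$; and the comparison with $1 + \frac{\degree^2}{2(\dimension - \degree(\degree+2)) + \degree^2}$ reduces to $\powerloss \leq \frac{1}{\degree} - \frac{2}{\dimension}$, which holds because $\dimension > \degree(\degree+2)$ forces $\frac{1}{\degree} - \frac{2}{\dimension} > \frac{1}{\degree} - \frac{2}{\degree(\degree+2)} = \frac{1}{\degree+2} > \frac{1}{\degree(\degree+2)} > \powerloss$. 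Thus $p = \frac{\dimension}{\dimension - \degree/2\powerloss}$, and Theorem~\ref{theorem:main_theorem} delivers the boundedness of $\fullmaxop$ from $\ell^{p,1}(\Z^\dimension)$ to $\ell^{p,\infty}(\Z^\dimension)$, which is precisely the restricted weak-type $\inparentheses{\frac{\dimension}{\dimension - \degree/2\powerloss}, \frac{\dimension}{\dimension - \degree/2\powerloss}}$ statement.

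I do not expect any genuine obstacle here: the corollary is a pure specialization, with all of the analysis contained in Theorem~\ref{theorem:main_theorem} and the Approximation Formula behind it. The two points that merit care are reading off the value $\degree$ for the density-parameter of the full sequence $\acceptableradii$ from the definition in Section~\ref{section:main_theorem}, and verifying that the standing assumption $\powerloss < [\degree(\degree+2)]^{-1}$ is exactly what makes the ``arithmetic'' term $1 + \frac{\degree}{2(\dimension\powerloss-\degree)+\degree}$ dominate the ``oscillatory'' term $1 + \frac{\degree^2}{2(\dimension - \degree(\degree+2))+\degree^2}$ coming from the stationary-phase estimates; the extra room in $\dimension > \degree(\degree+2)$ is what upgrades the equality case $\powerloss = \frac{1}{\degree}-\frac{2}{\dimension}$ to a strict inequality.
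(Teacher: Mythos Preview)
Your proof is correct and is exactly the specialization the paper has in mind: the paper states that Theorem~\ref{theorem:main_theorem} ``specializes to two corollaries'' and explicitly notes that ``the full sequence $\acceptableradii$ has density at most $\degree$,'' leaving the remaining arithmetic to the reader. Your verification that the third term in the maximum dominates under the hypothesis $\powerloss < [\degree(\degree+2)]^{-1}$ fills in precisely the bookkeeping the paper omits.
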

\begin{corollary}\label{corollary:lacunary_theorem}
If $\radii \subset \acceptableradii$ is a subsequence with density-parameter at most 
\(
\density 
\leq 
\min 
\{ 
\frac{2 (\dimension-\degree[\degree+2])}{\dimension-2\degree}, 
\frac{2 \degree (\dimension \powerloss - \degree)}{\dimension-2\degree} 
\} 
\), then the associated maximal function, $\maxop$ is restricted weak-type $\inparentheses{\frac{\dimension}{\dimension-\degree},\frac{\dimension}{\dimension-\degree}}$ for 
\( 
\dimension > \max{\{ \degree(\degree+2), \degree/\powerloss\}} 
\) . 
In particular, this holds if $\radii$ is a lacunary subsequence of \(\acceptableradii\). 
\end{corollary}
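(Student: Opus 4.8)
The plan is to derive Corollary~\ref{corollary:lacunary_theorem} directly from Theorem~\ref{theorem:main_theorem} by a short algebraic verification; essentially one checks that the density hypothesis forces the maximum defining $p$ to be attained by its first entry. Under the stated hypotheses $\degree \geq 3$ and $\dimension > \max\{\degree(\degree+2),\degree/\powerloss\}$, the quantities $\dimension - \degree$, $\dimension - \degree(\degree+2)$, $\dimension\powerloss - \degree$ and $\dimension - 2\degree$ are all strictly positive (for the last, note $\degree(\degree+2) > 2\degree$ since $\degree \geq 1$), so every denominator appearing below is positive and all cross-multiplications preserve inequalities.

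First I would show that the bound $\density \leq \frac{2(\dimension - \degree[\degree+2])}{\dimension - 2\degree}$ forces the second term in the maximum to be at most $\frac{\dimension}{\dimension-\degree}$. Writing $\frac{\dimension}{\dimension-\degree} = 1 + \frac{\degree}{\dimension-\degree}$, the inequality $1 + \frac{\density\degree}{2(\dimension-\degree[\degree+2])+\density\degree} \leq 1 + \frac{\degree}{\dimension-\degree}$ is equivalent, after clearing denominators and cancelling a factor of $\degree$, to $\density(\dimension-2\degree) \leq 2(\dimension - \degree[\degree+2])$, which is precisely the first constraint in the minimum. Likewise, the bound $\density \leq \frac{2\degree(\dimension\powerloss - \degree)}{\dimension - 2\degree}$ is, after the same manipulation, equivalent to $1 + \frac{\density}{2(\dimension\powerloss-\degree)+\density} \leq \frac{\dimension}{\dimension-\degree}$, so it controls the third term. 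Hence under the hypothesis of the corollary the maximum in Theorem~\ref{theorem:main_theorem} equals its first entry, $p = \frac{\dimension}{\dimension-\degree}$, and the theorem yields that $\maxop$ maps $\ell^{p,1}(\Z^\dimension)$ into $\ell^{p,\infty}(\Z^\dimension)$, which is exactly the asserted restricted weak-type $\inparentheses{\frac{\dimension}{\dimension-\degree},\frac{\dimension}{\dimension-\degree}}$ bound.

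For the last sentence I would invoke the definition of density-parameter from Section~\ref{section:main_theorem}: a lacunary subsequence of $\acceptableradii$ contains only $O(1)$ elements in each dyadic window $[\dyadicradius,2\dyadicradius)$, so it has density-parameter $0$, which lies below the lower bound $\min\{\ldots\}$ in the corollary since that bound is strictly positive by the dimension hypothesis. Applying the case $\density = 0$ of the previous paragraph then gives the claim for lacunary sequences. There is no substantial obstacle; the only point requiring care is the positivity of $\dimension - 2\degree$ and of the two numerators in the $\min$, which is guaranteed precisely by $\dimension > \max\{\degree(\degree+2), \degree/\powerloss\}$.
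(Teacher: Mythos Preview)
Your derivation is correct and is exactly the paper's intended route: the corollary is stated as a direct specialization of Theorem~\ref{theorem:main_theorem}, with no separate proof given, and your algebraic check that the density constraint forces the maximum defining $p$ to collapse to $\frac{\dimension}{\dimension-\degree}$ is precisely what is needed. One small wrinkle in the lacunary clause: under the paper's definition a lacunary sequence satisfies $\#\{r\leq R\}\lesssim\log R$, which is $\lesssim_\epsilon R^\epsilon$ for every $\epsilon>0$ but not $\lesssim R^0$, so it does not literally have density-parameter at most $0$; the fix is simply to pick any $\epsilon$ below the strictly positive threshold $\min\{\ldots\}$ and apply your first paragraph with $\density=\epsilon$ (the paper itself records this in Section~\ref{section:main_theorem} as ``a lacunary subsequence has density-parameter at most $\epsilon$ for all $\epsilon>0$'').
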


\begin{remark}
For any degree $\degree \geq 2$, it was conjectured that, the maximal function $\maxop$ is bounded on $\ell^p(\Z^\dimension)$ for all $1<p \leq \infty$, and possibly weak-type (1,1), for any lacunary subsequence $\radii \subset \acceptableradii$. 
This was disproven by J. Zienkiewicz (personal communication), who demonstrated that there are arbtitrarily thin infinite sequences \(\radii \subset \acceptableradii\) such that when \(\degree = 2\),  \(\maxop\) fails to be bounded on \(\ell^p(\Z^\dimension)\) for \(p < \frac{\dimension}{\dimension-1}\). 
These examples extend to higher degrees \(\degree \geq 2\). 
On the other hand when \(\degree=2\) the author, in \cite{Hughes_lacunary_2014}, recently showed that for the maximal function \(\maxop\) over the super-lacunary sequence \(\radii := \setof{\radius_j > 0 : \radius_j^2 = 1+\prod_{i=1}^{h(j)} \mathfrak{p}_j}\) where \(h(j) := 2^{j^v}\) for some \(v>1\) and $\mathfrak{p}_j$ is the $j^{th}$ prime, is strong type \(\ell^{\frac{\dimension}{\dimension-2}}\). 
\end{remark}

The proof of Theorem~\ref{theorem:main_theorem} follows Ionescu's argument in \cite{Ionescu_spherical} which itself is based on Bourgain's method for the continuous analogues of our averages in \cite{Bourgain}. Bourgain's strategy is to decompose our operator into two sublinear pieces, one with a good bound on $\ell^2$ and the other with a bad bound on $\ell^1$, and arbitrage these for an improvement in the middle. 
%
%
Ionescu decomposes the spherical operators into 5 pieces and optimizes their bounds altogether. 
Our argument is a variant of Ionescu's. It differs from Ionescu's by partitioning his argument into two steps. The first step, treated in Section~\ref{section:main_term}, uses Bourgain's strategy to prove the weak-type bound for the main term at the endpoint \(p=\frac{\dimension}{\dimension-\degree}\). The second step, treated in Section~\ref{section:main_theorem} combines the result of the first step with Bourgain's strategy applied to the error term. 
Our use of the union bound appears to be a novelty in our variant of Bourgain's method. Despite its simplicity this application of the union bound is efficient and appears to be over-looked in the literature regarding lacunary spherical averages. In particular, one can give simpler proofs of the Calder\'on and Coifman--Weiss theorems proving that the lacunary spherical maximal function maps \(L^p(\R^\dimension)\) to itself for \(1 < p \leq \infty\). 
%

However, our main novelty is in The Approximation Lemma. By exploiting Vinogradov's mean value theorems and conjectures, we obtain a new bound for The Approximation Lemma in \cite{Hughes_Vinogradov} which controls the error term associated to an individual averages as opposed to a dyadic range. Again, using Bourgain's method a la Ionescu and the union bound, our next results improve the range of boundedness for lacunary \(\degree\)-spherical maximal functions in terms of dimension by a factor of the degree \(\degree\). 
Like Theorem~\ref{theorem:main_theorem} we would like our theorems to understand the precise necessary arithmetic input. 
To this effect, our next results are stated in terms of a mean value hypothesis \(\meanvaluehypothesis{\dimension}\) motivated by Waring's problem. 
\begin{MeanValueHypothesis} 
For a fixed degree \(\degree \geq 3\), we will say that \(\meanvaluehypothesis{\dimension}\) is true for some dimension \(\dimension \in \N\) with \(\dimension > \degree\) if 
\begin{equation}
\# \setof{m_1, \dots, m_\dimension ; n_1, \dots, n_\dimension \in [r] : \sum_{i=1}^\dimension |n_i|^\degree = \sum_{i=1}^\dimension |m_i|^\degree} 
\lesssim 
\radius^{2\dimension-\degree} 
\end{equation} 
for all \(\radius \in \acceptableradii\) as \(\radius \to \infty\) where the implicit constants are assumed to be independent of \(\radius \in \acceptableradii\). 
\end{MeanValueHypothesis}

\begin{theorem}\label{theorem:low_density_improvement_by_mean_values}
Assume that \(\meanvaluehypothesis{\moment}\) is true for some \(\moment > \degree\) and that \(\hypothesis{\powerloss}\) is true for some \(\powerloss \in (0,1)\). 
If \(\radii\) is a subsequence of \(\acceptableradii\) with density-parameter at most \(\density \leq \powerloss[\dimension-2\moment]\), then \(\maxop\) is bounded from \(\ell^{p,1}(\Z^\dimension)\) to \(\ell^{p,\infty}(\Z^\dimension)\) for 
\(
p = 
\max 
\{
\frac{\dimension}{\dimension-\degree}, 
1 + \frac{\density\degree}{2(\dimension-\degree[\degree+2])+\density\degree},  
1+\frac{\density}{2\powerloss[\dimension-2\moment] - \density} 
\}
\) 
and \(\dimension > \max \{2s, \degree(\degree+2)\}\). 
\end{theorem}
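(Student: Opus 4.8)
The proof follows the two-step scheme of Theorem~\ref{theorem:main_theorem}, the novelty being that the error term is handled with the single-average Approximation Formula -- which is where the mean value hypothesis $\meanvaluehypothesis{\moment}$ enters -- in place of the dyadic Approximation Formula of \cite{Hughes_Vinogradov}. Write $\avgop=\HLop+\error$, with $\HLop$ the Hardy--Littlewood main term given by the Approximation Formula and $\error:=\avgop-\HLop$; by sublinearity
\[
\maxop\fxn\;\le\;\sup_{\radius\in\radii}\absolutevalueof{\HLop\fxn}\;+\;\sup_{\radius\in\radii}\absolutevalueof{\error\fxn},
\]
so it suffices to bound the two maximal functions separately and take the larger exponent. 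The maximal function of the main term is exactly the object of Section~\ref{section:main_term}, where Bourgain's arbitrage between a good $\ell^2$ bound and a lossy $\ell^1$ bound gives the restricted weak-type estimate at the first exponent $\frac{\dimension}{\dimension-\degree}$, for all of $\acceptableradii$ and hence for $\radii$.

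For the error maximal function I would, following Section~\ref{section:main_theorem}, split $\error=\error^{\mathrm{low}}+\error^{\mathrm{high}}$ at the frequency scale dictated by the Approximation Formula. The low piece carries only the secondary major-arc term; it is estimated with the same fixed exponential sum bound as in the proof of Theorem~\ref{theorem:main_theorem} -- not involving the hypotheses -- and, after the union bound over the $\lesssim\radius^{\density}$ radii in each dyadic block, reproduces verbatim the second exponent $1+\frac{\density\degree}{2(\dimension-\degree[\degree+2])+\density\degree}$ together with the constraint $\dimension>\degree(\degree+2)$. The high piece is where the single-average Approximation Formula does the work. One proves it by a Hardy--Littlewood circle-method dissection on $\T^{\dimension}$ with Farey cutoff tuned to the single scale $\radius$: on the major arcs one subtracts off the Gauss-sum-times-continuous-sphere expression that defines $\HLop$, so that $\error^{\mathrm{high}}$ is supported on the minor arcs, where one estimates the twisted product sum $\prod_{i=1}^{\dimension}\sum_{\absolutevalueof{n}\le\radius}\eof{\beta\absolutevalueof{n}^{\degree}-n\xi_i}$ by writing $\dimension=\moment+\moment+(\dimension-2\moment)$ -- possible since $\dimension>2\moment$ -- and applying $\meanvaluehypothesis{\moment}$ (after Holder in $\beta$) to the first $2\moment$ factors and the minor-arc sup bound $\hypothesis{\powerloss}$ to each of the remaining $\dimension-2\moment$. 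Since $\numberoflatticepoints(\radius)\eqsim\radius^{\dimension-\degree}$, the arithmetic cancels and one is left with the $\ell^2$ estimate
\[
\elltwooperatornorm{\error^{\mathrm{high}}}\;\lesssim\;\radius^{-\powerloss[\dimension-2\moment]}
\]
uniformly in $\radius\in\acceptableradii$, complemented by a controlled $\ell^1$ bound (the main-term kernel having finite $\ell^1$-norm once $\dimension>2\degree$, which $\dimension>2\moment$ guarantees).

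With these in hand the high piece is assembled by the method of Bourgain and Ionescu: decompose $\radii$ dyadically, apply the union bound over the $\lesssim\radius^{\density}$ radii in each block, and run a level-set argument (testing on indicator functions) that interpolates the $\ell^2$ decay $\radius^{-\powerloss[\dimension-2\moment]}$ against the $\ell^1$ bound; the geometric series in the dyadic parameter sums, and the resulting restricted weak-type $(p,p)$ estimate holds, precisely when $p>1+\frac{\density}{2\powerloss[\dimension-2\moment]-\density}$, the denominator being positive exactly because $\density\le\powerloss[\dimension-2\moment]$ and $\dimension>2\moment$. Taking the maximum of the three exponents completes the proof.

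The real obstacle is the single-average Approximation Formula. Unlike its dyadic predecessor, there is no averaging over a window of radii available to smooth out the exponential sums, so the minor-arc analysis has to be done pointwise in $\radius$; one must verify that peeling off two Vinogradov-type blocks of $\moment$ variables, while absorbing the loss from $\hypothesis{\powerloss}$ on the complementary $\dimension-2\moment$ variables, genuinely yields the power $\powerloss[\dimension-2\moment]$ with constants uniform over $\radius\in\acceptableradii$. This forces care in the transition region between major and minor arcs, in the $\modulus$-aspect decay of the Gauss sums $\Gausssum$, and in matching every truncation to the single scale $\radius$ rather than to a dyadic range. By comparison, the union-bound and level-set bookkeeping of the last step is routine once the power saving is secured.
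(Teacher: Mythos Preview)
Your strategy is largely aligned with the paper's, but the organization introduces an avoidable obstacle. The paper does \emph{not} first split $\maxop\fxn\le\sup|\HLop\fxn|+\sup|\error\fxn|$ over all radii and then run Bourgain's arbitrage on each error piece. Instead it keeps the full average intact and splits at a threshold $\bigradius$ (Lemma~\ref{lemma:decomposition}):
\[
\maxop\fxn\;\le\;\sup_{\radius\le\bigradius}|\avgop\fxn|\;+\;\sup_{\radius>\bigradius}|\HLop\fxn|\;+\;\sup_{\radius>\bigradius}|\error\fxn|,
\]
using the \emph{full} average $\avgop$---a probability measure, hence $\ell^1\to\ell^1$ norm exactly $1$---in the narrow regime. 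This gives the $\ell^1$ input $\lpnorm{1}{\sup_{\radius\le\bigradius}|\avgop\fxn|}\lesssim\bigradius^\density\lpnorm{1}{\fxn}$ for free (Proposition~\ref{proposition:density_ell1_bound}). Your route instead requires uniform $\ell^1\to\ell^1$ bounds on $\error^{\mathrm{low}}_\radius=\avgop^{Major}-\HLop$ and $\error^{\mathrm{high}}_\radius=\avgop^{minor}$ separately; you hand-wave this via the $\ell^1$-finiteness of the main-term kernel $\HLop$, but that does not control the major- and minor-arc kernels individually, and the paper never establishes such bounds. This is the gap in your sketch.

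For the wide error the paper packages everything into a single $\ell^2$ estimate, Proposition~\ref{proposition:single_error_term_bound}: on each dyadic block the major-arc error uses \eqref{equation:dyadic_major_arc_approximation} (already a sup bound over the block---no union bound is needed there, contrary to what you write), while the minor-arc error uses the single-average bound $\elltwooperatornorm{\avgop^{minor}}\lesssim\radius^{-(\dimension-2\moment)\powerloss}$ from the Single Approximation Formula together with the union bound over the $\lesssim\dyadicradius^\density$ radii in the block. Summing the dyadic blocks gives $\lpnorm{2}{\sup_{\radius>\bigradius}|\error\fxn|}\lesssim\bigradius^{-\singlesavings}\lpnorm{2}{\fxn}$ with $\singlesavings=\min\{(\dimension-2\moment)\powerloss-\density,\ \dimension/\degree-(\degree+2)\}$. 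Balancing this against the narrow $\ell^1$ bound (choosing $\bigradius=\altitude^{-1/(2\singlesavings+\density)}$) and invoking Theorem~\ref{theorem:main_term} for the wide main term yields the stated $p$; the three-term maximum in the statement is simply $\max\{\dimension/(\dimension-\degree),\ 1+\density/(2\singlesavings+\density)\}$ unpacked according to which branch of the minimum defining $\singlesavings$ is active.
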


In Section~\ref{section:connection_to_Vinogradov} we connect our hypothesis to Waring's problem and the Vinogradov mean value conjectures and give explicit estimates in terms of the currently best known bounds. 


\subsection{Outline of the paper}

The structure of the paper is outlined as follows. 
In Section~\ref{section:approximation_formulas} we recall the (Dyadic) Approximation Formula from \cite{Hughes_Vinogradov} which decomposes our averages into a main term and an error term. We improve on the bounds for the error term of a single average. 
This improvement of the error term will be used in Section~\ref{section:main_theorem}. 
In Section~\ref{section:main_term}, we prove Theorem~\ref{theorem:main_term} which says that maximal function for the main term is restricted weak-type at the endpoint $\frac{\dimension}{\dimension-\degree}$. The proof is very similar to that in \cite{Ionescu_spherical}; as such, we assume the reader's familiarity with the Magyar--Stein--Wainger tranference principle and Bourgain's lemma in \cite{Ionescu_spherical}. 
In Section~\ref{section:main_theorem}, we prove Theorems~\ref{theorem:main_theorem} and \ref{theorem:low_density_improvement_by_mean_values}. 
We conclude the paper with Section~\ref{section:connection_to_Vinogradov} by connecting our Mean Value Hypothesis with Vinogradov's Mean Value Conjectures and recent work of T. Wooley. 
%

\subsection{Notations}

We use the same notations outlined in the previous paper \cite{Hughes_Vinogradov}. 
We recall these here for the reader's convenience. 
Here and throughout, $\eof{t}$ will denote the character $e^{2 \pi i t}$ for $t \in \R, \Zmod{\modulus}$ or $\T$. The torus $\T^\dimension := (\R/\Z)^\dimension$ is identified with the cube $[-1/2,1/2]^\dimension \subset \R^\dimension$. 
For two functions $f, g$, $f \lesssim g$ if $\absolutevalueof{f(x)} \leq C \absolutevalueof{g(x)}$ for some constant $C>0$. 
$f$ and $g$ are comparable $f \eqsim g$ if $f \lesssim g$ and $g \lesssim f$. 
All implicit constants throughout the paper may depend on dimension $\dimension$ and degree $\degree$. 
We will often identify $\Zmod{\modulus}$ with the set $\inbraces{1, \dots, \modulus}$, and $\unitsmod{\modulus}$, the group of units in $\Zmod{\modulus}$, will also be regarded as a subset of $\inbraces{1, \dots, \modulus}$ . 
For a set \(X\), we denote its indicator function by \(\indicator{X}\).

There are three Fourier transforms floating around. 
To distinguish these, if $f: \R^\dimension \to \C$, then we define its Fourier transform by $\contFT{\fxn}(\toruspoint) := \int_{\R^\dimension} f(x) e(x \cdot \toruspoint) dx$ for $\toruspoint \in \R^\dimension$; 
if $f: \T^\dimension \to \C$, then we define its Fourier transform by $\torusFT{\fxn}(\latticepoint) := \int_{\T^\dimension} f(x) e(-\latticepoint \cdot x) dx$ for $\latticepoint \in \Z^\dimension$; 
and if $f:\Z^\dimension \to \C$, then we define its Fourier transform by $\latticeFT{\fxn}(\toruspoint) := \sum_{\latticepoint \in \Z^\dimension} f(\latticepoint) e(n \cdot \toruspoint)$ for $\toruspoint \in \T^\dimension$.

\section*{Acknowledgements}
The author would like to thank Elias Stein for introducing him to the subject and for numerous conversations regarding the problems. 
The author thanks Lillian Pierce, Roger Heath-Brown, Tim Browning and Trevor Wooley for enlightening discussions related to the circle method. 
The author thanks Jim Wright for pointing out \cite{Steckin} and \cite{Duoandikoetxea_Vargas}, and Tony Carbery for a careful reading of the paper.

\newpage 
\section{The Approximation Formulas}\label{section:approximation_formulas}

A crucial insight of Magyar--Stein--Wainger - see in \cite{MSW_spherical} - in their proof of the boundedness of the discrete spherical maximal function is their approximation formula. Magyar generalized their approximation formula in \cite{Magyar_ergodic} for a class of forms including the $\degree$-spheres here. The author built on Magyar's work and sharpened his result for \(\degree\)-spheres in \cite{Hughes_Vinogradov} using a variant of Hypothesis \(\hypothesis{\powerloss}\) that included a log-loss. 
%
In this section we summarize the decomposition of the \(\degree\)-spherical measure and its bounds from the circle method approximation. 
In particular we recall the \emph{(Dyadic) Approximation Formula} from \cite{Hughes_Vinogradov} which will be used in Theorem~\ref{theorem:main_theorem}. Subsequently we improve the Dyadic Approximation Formula for a single average in the \emph{Single Approximation Formula} below. 
Both Approximation Formulas rely on bounds for exponential sums and oscillatory integrals. 
The Dyadic Approximation Formula makes use of our Sup Hypothesis \(\hypothesis{\powerloss}\) while the Single Approximation Formula makes use of the Sup Hypothesis and our Mean Value Hypothesis \(\meanvaluehypothesis{\dimension}\). 
The necessary bounds for the Fourier transform of the continuous $\degree$-spherical surface measures are significantly better than the analogous bounds for exponential sums. Since these bounds are implicit in the Approximation Formula, we do not recall them here; instead refer the vigilant reader to Section~3 of \cite{Hughes_Vinogradov}. 
Throughout the entire paper we assume that the dimension \(\dimension\) is sufficiently large so that 
\(
\numberoflatticepoints(\radius) 
\eqsim 
\radius^{\dimension-\degree} 
\) 
and renormalize the averages $\avgop$ as 
\[
\averagefxn(\latticepoint) 
= 
\frac{1}{c_{\dimension, \degree} \cdot \radius^{\dimension-\degree}} \sum_{n \in \arithmetichigherordersphere} \fxn(\latticepoint-n) 
\] 
where 
\(c_{\dimension, \degree} := \frac{\Gamma(1+1/\degree)^\dimension}{\Gamma(\dimension/\degree)}\) is the volume of the Gelfand--Leray form on \(\conthigherordersphere\).
For \(\toruspoint \in \T^\dimension\), the multiplier $\latticeFT{\avgop}(\toruspoint)$ for the convolution operator \(\avgop\) is given by \((c_{\dimension, \degree} \cdot \radius^{\dimension-\degree})^{-1} \cdot a_{\radius}(\toruspoint)\) where 
\[
a_{\radius}(\toruspoint) 
:=  \sum_{\latticepoint \in \arithmetichigherordersphere} \eof{\latticepoint \cdot \toruspoint} 
. 
\] 
Note that \(a_{\radius}(\toruspoint)\) is the Fourier transform of the characteristic function of the set of integer points on the $\degree$-sphere $\arithmeticsphere{\radius}$. 
For \(\radius \in \acceptableradii\), we have 
\begin{align*}
a_{\radius}(\toruspoint) 
& = 
\int_{\T} \sum_{||\latticepoint||_\infty \leq \radius} \eof{(\absolutevalueof{\latticepoint}^\degree-\radius^\degree)t + \latticepoint \cdot \toruspoint} \, dt 
\\ 
& = 
\int_{\T} \eof{-\radius^\degree t} \prod_{i=1}^{\dimension} \left( \sum_{|\latticepoint_i| \leq \radius} \eof{\absolutevalueof{\latticepoint_i}^\degree t + \latticepoint_i \toruspoint_i} \right) \, dt 
\end{align*}
where the first sum is over integer points in a cube of side-length \(2\radius\) centered at the origin and the second line follows from the tensor product nature of the exponential sum in the first line.
\footnote{We choose to sum over a cube rather than a ball as in \cite{Hughes_Vinogradov} so that we can exploit this tensor product structure in the proof of the Approximation Lemma}

The torus \(\T\), commonly identified with the interval \([0,1]\) via the character \(e(x) := e^{2\pi i x}\), decomposes into a disjoint union of major and minor arcs, commonly identified as collections of intervals in \([0,1]\). 
This decomposes $a_{\radius}$: 
\[
a_{\radius}(\toruspoint) 
= 
a^{Major}_{\radius}(\toruspoint) + a^{minor}_{\radius}(\toruspoint)
\]
where 
\begin{align*}
& a^{Major}_{\radius}(\toruspoint) 
:= 
\int_{\majorarcs} \sum_{||\latticepoint||_\infty \leq \dyadicradius} \eof{(\absolutevalueof{\latticepoint}^\degree-\radius^\degree)t + \latticepoint \cdot \toruspoint} \, dt 
\\ 
& a^{minor }_{\radius}(\toruspoint) 
:= 
\int_{\minorarcs} \sum_{||\latticepoint||_\infty \leq \dyadicradius} \eof{(\absolutevalueof{\latticepoint}^\degree-\radius^\degree)t + \latticepoint \cdot \toruspoint} \, dt 
. 
\end{align*}
Let $\avgop^{Major}$ and $\avgop^{minor}$ denote their respective normalized convolution operators. 
These multipliers are normalized so that 
\begin{align*}
& 
\latticeFT{\avgop^{Major}} 
= 
(c_{\dimension, \degree} \cdot \radius^{\dimension-\degree})^{-1} \cdot a^{Major}_{\radius} 
\\ 
& 
\latticeFT{\avgop^{minor}} 
= 
(c_{\dimension, \degree} \cdot \radius^{\dimension-\degree})^{-1} \cdot a^{minor}_{\radius}. 
\end{align*}
The multiplier corresponding to the major arcs, \(a^{Major}_{\radius}\) is then approximated by \(\radius^{\dimension-\degree} \cdot \HLmultiplier\).\footnote{
  There are three intermediate steps in the approximation. 
  The first defines multipliers \(b^{\unit/\modulus}_\radius\) to approximate each multiplier \(a^{\unit/\modulus}_{\radius}\) that compose the major arcs. 
  The second step approximates \(b^{\unit/\modulus}_\radius\) by \(c^{\unit/\modulus}_\radius := \radius^{\dimension-\degree} \HLmultiplier\). The final step `completes the singular series'. We refer the reader to \cite{Hughes_Vinogradov} for more details. 
}
Altogether, the averages decompose as 
\begin{equation*}
\avgop 
= 
\HLop + (\avgop^{Major} - \HLop) + \avgop^{minor} 
\end{equation*}
for each \(\radius \in \acceptableradii\). 
Our main focus of this section is the error term. 
In section~\ref{section:main_term}, we will describe the structure of the main term \(\HLop\) and bounds for it.

We see that the error term 
\[
\error 
:= 
\avgop - \HLop 
= 
(\avgop^{Major} - \HLop) + \avgop^{minor} 
\]
naturally composes of two pieces: \(\avgop^{Major} - \HLop\) and \(\avgop^{minor}\). 
The Dyadic Major Arc Aproximation Lemma (Section~7 of \cite{Hughes_Vinogradov}) reveals that we have the following bounds for the major arc piece of our error term:
\begin{equation}\label{equation:dyadic_major_arc_approximation}
\lpnorm{2}{\disup{\dyadicradius}{\radius}{\absolutevalueof{ \avgop^{Major}\fxn - \HLop\fxn} } } 
\lesssim 
\dyadicradius^{\degree+2-\frac{\dimension}{\degree}} \lpnorm{2}{\fxn} 
\end{equation}
\begin{remark}
The Dyadic Major Arc Aproximation Lemma of Section~7 of \cite{Hughes_Vinogradov} is actually stated with a a log-loss; that is, \(\lesssim\) is replaced with \(\lessapprox\) in  \eqref{equation:dyadic_major_arc_approximation}. 
However, this may be simply removed by replacing Hua's bound for Gauss sums in the proofs of Lemmas~7.1 and 7.2 in \cite{Hughes_Vinogradov} with Steckin's estimate. 
See section~\ref{section:main_term} for an example of this. 
We do not go into further details here. 
\end{remark}
We do not improve \eqref{equation:dyadic_major_arc_approximation} in this paper as this is not our goal here. Instead our aim is to understand and improve bounds for the minor arc piece \(\avgop^{minor}\). 

\subsection{The Approximation Formula: dyadic version}
\label{section:dyadic_approximation_lemma}

The analysis for the minor arc piece of our error term, \(\avgop^{minor}\) relies on \( \hypothesis{\powerloss} \) and proceeds along a different argument than our major arc piece. 
We handle the minor arc error term by Lemma~6.2 from \cite{Hughes_Vinogradov}: 
if $\hypothesis{\powerloss}$ is true for some \(\powerloss \in (0,1) \), then 
\begin{equation}\label{equation:dyadic_minor_arc_estimate}
\lpnorm{2}{\disup{\dyadicradius}{\radius}{\absolutevalueof{\avgop^{minor}\fxn} } } 
\lesssim 
\dyadicradius^{\degree - \dimension \powerloss} \lpnorm{2}{\fxn} 
. 
\end{equation}
\noindent
Combining \eqref{equation:dyadic_major_arc_approximation} and \eqref{equation:dyadic_minor_arc_estimate}, we obtain the Approximation Formula in \cite{Hughes_Vinogradov}: 
\begin{approximationlemma}[dyadic version]
If hypothesis $\hypothesis{\powerloss}$ is true for some $0 < \powerloss < 1$, then for $\dimension>\max{\{ \degree(\degree+2), \degree/\powerloss\}}$ and $\toruspoint \in \T^\dimension$,  
\begin{equation}\label{approximation_formula}
\arithmeticFT{\arithmetichigherorderspheremeasure}(\toruspoint) 
= \HLmultiplier(\toruspoint) + \arithmeticFT{\error}(\toruspoint) 
. 
\end{equation}
The error term $\arithmeticFT{\error}$ is a multiplier term with convolution operator $\error$ satisfying 
\begin{equation}\label{eq:error_bound}
\lpnorm{2}{\disup{\dyadicradius}{\radius}{|\error \fxn|}}
\lesssim \dyadicradius^{-\dyadicsavings} \lpnorm{2}{\fxn}
\end{equation}
where 
\(
\dyadicsavings 
:= 
\min \{ \dimension \powerloss - \degree, \frac{\dimension}{\degree}-(\degree+2) \} 
> 0
. 
\)
\end{approximationlemma}

Our goal in this section is to improve \eqref{equation:dyadic_minor_arc_estimate} for a single average. 
This will allow us to improve our range of boundedness for maximal operators over sparse sequences. 

\subsection{The Approximation Formula for a single average} 

Our adaptation of the Approximation Formula in \cite{Hughes_Vinogradov} for a single average is motivated by the method of directly exploiting the decay of the Fourier transform of the continuous spherical measure to study the continuous lacunary spherical maximal function versus the loss of a derivative in studying the full spherical maximal function. 
As in \cite{Hughes_Vinogradov} we are interested in the precise exponential sum bounds necessary in proving our theorems. 
To this effect we introduced our \emph{Mean Value Hypothesis} \(\meanvaluehypothesis{\dimension}\); see the introduction for its definition. 
%
%
With this in mind, we now have the following \emph{single average \(\ell^2\) inequality}. The reader may wish to compare this to the the \emph{Main \(\ell^2\) inequality} of  \cite{Hughes_Vinogradov} 
which is from the proof of (6.4) on page 204 of \cite{MSW_spherical} or the bottom of page 936 in \cite{Magyar_ergodic}. 
\begin{lemma}\label{lemma:single_ell_2_inequality}
If $T_r$ is an operator with multiplier 
\[
\latticeFT{T_r}(\xi) 
= 
\beta_r(\xi) 
:= 
\int_I \alpha(t,\xi) \eof{-t \radius^\degree} \; dt 
\] 
where 
\(
\alpha(t,\xi) : I \times \T^\dimension \to \C
\) 
and \(I\) is an interval in \([0,1]\), then for any $1 \leq p \leq \infty$ 
\begin{equation}\label{equation:new_main_inequality}
\lpnorm{2}{T_r f} 
\leq \sup_{\xi \in \T^\dimension} \inparentheses{\int_I \absolutevalueof{\alpha(t,\xi)}^p \; dt}^{1/p} \lpnorm{2}{f} 
\end{equation}
with the standard modification at $p=\infty$. 
\end{lemma}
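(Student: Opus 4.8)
The plan is to prove the bound by combining Plancherel on the lattice side with a pointwise estimate on the multiplier $\beta_r(\xi)$, where the pointwise estimate itself comes from viewing the $t$-integral as an $L^1$-average of a multiplier of norm controlled by $\alpha(t,\cdot)$ and then upgrading from $L^1$ to $L^p$ in $t$. More precisely, since $T_r$ is a convolution operator on $\Z^\dimension$ with multiplier $\beta_r \in L^\infty(\T^\dimension)$, Plancherel gives
\[
\lpnorm{2}{T_r f}
=
\Lpnorm{2}{\T^\dimension}{\beta_r \cdot \latticeFT{f}}
\leq
\inparentheses{\sup_{\xi \in \T^\dimension} \absolutevalueof{\beta_r(\xi)}} \lpnorm{2}{f}
,
\]
so everything reduces to showing
\[
\sup_{\xi \in \T^\dimension} \absolutevalueof{\beta_r(\xi)}
\leq
\sup_{\xi \in \T^\dimension} \inparentheses{\int_I \absolutevalueof{\alpha(t,\xi)}^p \; dt}^{1/p}
.
\]

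The heart of the matter is this last inequality. Fix $\xi \in \T^\dimension$. Then
\[
\absolutevalueof{\beta_r(\xi)}
=
\absolutevalueof{\int_I \alpha(t,\xi) \eof{-t\radius^\degree} \, dt}
\leq
\int_I \absolutevalueof{\alpha(t,\xi)} \, dt
,
\]
since $\absolutevalueof{\eof{-t\radius^\degree}} = 1$. Now I apply Hölder's inequality in the $t$-variable over the interval $I$, which has length at most $1$: with $1/p + 1/p' = 1$,
\[
\int_I \absolutevalueof{\alpha(t,\xi)} \, dt
\leq
\inparentheses{\int_I \absolutevalueof{\alpha(t,\xi)}^p \, dt}^{1/p}
\inparentheses{\int_I 1 \, dt}^{1/p'}
\leq
\inparentheses{\int_I \absolutevalueof{\alpha(t,\xi)}^p \, dt}^{1/p}
,
\]
using $\measureof{I} \leq 1$. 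Taking the supremum over $\xi$ and chaining the two displays gives the claim. The cases $p=1$ (no Hölder needed, just the triangle inequality for integrals) and $p=\infty$ (replace the $L^p$ norm in $t$ by the $L^\infty$ norm, since $\int_I \absolutevalueof{\alpha(t,\xi)}\,dt \leq \measureof{I} \cdot \sup_t \absolutevalueof{\alpha(t,\xi)} \leq \sup_t \absolutevalueof{\alpha(t,\xi)}$) are the boundary instances of the same argument and constitute the "standard modification at $p=\infty$."

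There is really no serious obstacle here — the lemma is a soft functional-analytic packaging, and the only point requiring any care is making sure the interchange of $\sup_\xi$ and the $t$-integral goes in the correct (trivial) direction and that one genuinely uses $\measureof{I} \leq 1$ rather than a generic finite length (which would produce an extra constant). If I wanted to be maximally careful I would note that measurability of $(t,\xi) \mapsto \alpha(t,\xi)$ is what legitimizes Fubini/Hölder in the $t$-variable, but for the purposes of this paper that is implicit in the hypothesis that $\beta_r$ is a well-defined bounded multiplier. The real work, of course, is not in proving this lemma but in later sections where one chooses $\alpha(t,\xi)$ to be (a smooth truncation of) the product $\eof{-\radius^\degree t}\prod_i \sum_{|m_i|\leq r} \eof{|m_i|^\degree t + m_i \xi_i}$ restricted to minor arcs, and then estimates $\int_I \absolutevalueof{\alpha(t,\xi)}^p \, dt$ for a suitable $p$ using the Mean Value Hypothesis $\meanvaluehypothesis{\dimension}$ together with the Sup Hypothesis $\hypothesis{\powerloss}$ — that is where the gain over the dyadic formula will come from.
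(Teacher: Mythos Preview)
Your proof is correct and matches the paper's argument essentially line for line: Plancherel to reduce to a sup-bound on $\beta_r$, then the triangle inequality followed by H\"older in $t$ using $\measureof{I}\leq 1$. There is nothing to add.
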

\begin{proof}
By Plancherel's theorem, we have the familiar bound 
%
\[
\lpnorm{2}{T_r f} 
\leq 
\sup_{\xi \in \T^\dimension} \absolutevalueof{\beta_r(\xi)} \cdot \lpnorm{2}{f} 
. 
\]
Applying Holder's inequality we bound the first factor: 
\begin{align*}
\absolutevalueof{\beta_r(\xi)} 
& \leq 
\int_I \absolutevalueof{\alpha(t,\xi)} \; dt 
\\ 
& \leq 
|I|^{1/p'} \inparentheses{\int_I \absolutevalueof{\alpha(t,\xi)}^p \; dt}^{1/p} 
\\ 
& \leq 
\inparentheses{\int_I \absolutevalueof{\alpha(t,\xi)}^p \; dt}^{1/p} 
\end{align*}
since \(|I| \leq 1\). 
%
\end{proof}

The bound \eqref{equation:new_main_inequality} works best for $p=1$ when we exploit the tensor product nature of our exponential sums. 
The following lemma allows us to ignore the possible cancellation arising from the linear phases. 
The method here is common in number theory and proceeds by using Plancherel's theorem to rewrite the $L^p$-norm as the number of solutions to a system of equations. 
Hu--Li in \cite{Hu_Li1}, \cite{Hu_Li2} and \cite{Hu_Li3} recently used this method to study related restriction problems. 
Since the proof of the following lemma is a standard technique in the circle method, we refer the reader to see Chapter~5, Section~5.1 of \cite{Vaughan}, in particular inequality (5.4), or Chapter~4, section~2 of \cite{Montgomery} for similar proofs. 
We define the following notation for the remainder of this section: if \(\xi \in \T\) and \(t \in I\) for some interval \(I\), let 
\[
\alpha_\radius(t,\xi) 
:= 
\sum_{|\latticepoint| \leq \radius} \eof{\absolutevalueof{\latticepoint}^\degree t + \latticepoint \toruspoint} 
. 
\]

\begin{lemma}\label{lemma:remove_linear_phases}
For \(\moment \in \N\) and any $\xi_i, t \in \T$ with \(i=1, \dots, 2\moment\), 
\begin{equation}
{\int_0^1 \prod_{i=1}^{2\moment} \absolutevalueof{\alpha_\radius(t,\xi_i)} \; dt} 
\leq {\int_0^1 \absolutevalueof{\alpha_\radius(t,0)}^{2\moment} \; dt} 
\end{equation}
\end{lemma}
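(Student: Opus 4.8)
The plan is to expand the product $\prod_{i=1}^{2\moment} \absolutevalueof{\alpha_\radius(t,\xi_i)}$ by writing each factor as a sum, so that the integral over $t \in [0,1]$ becomes a weighted count of solutions to a Waring-type equation, and then bound that count by the diagonal contribution, which is exactly $\int_0^1 \absolutevalueof{\alpha_\radius(t,0)}^{2\moment}\,dt$. Concretely, I would first write $\absolutevalueof{\alpha_\radius(t,\xi_i)}^2 = \alpha_\radius(t,\xi_i)\conjugate{\alpha_\radius(t,\xi_i)}$, pairing the $2\moment$ factors into $\moment$ conjugate pairs, so that the product equals
\[
\sum_{\latticepoint_1, \latticepoint_1', \dots, \latticepoint_\moment, \latticepoint_\moment'}
\eof{\inparentheses{\sum_{j=1}^\moment (\absolutevalueof{\latticepoint_j}^\degree - \absolutevalueof{\latticepoint_j'}^\degree)} t}
\cdot \eof{\sum_{j=1}^\moment (\latticepoint_j - \latticepoint_j')\xi_j}
\]
where each $\latticepoint_j, \latticepoint_j'$ ranges over integers with absolute value at most $\radius$. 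Integrating in $t$ over $[0,1]$ kills every term except those with $\sum_j \absolutevalueof{\latticepoint_j}^\degree = \sum_j \absolutevalueof{\latticepoint_j'}^\degree$, leaving
\[
\int_0^1 \prod_{i=1}^{2\moment} \absolutevalueof{\alpha_\radius(t,\xi_i)}\,dt
= \sum_{\substack{\latticepoint_j, \latticepoint_j' : \ \absolutevalueof{\latticepoint_j},\absolutevalueof{\latticepoint_j'}\leq \radius \\ \sum_j \absolutevalueof{\latticepoint_j}^\degree = \sum_j \absolutevalueof{\latticepoint_j'}^\degree}} \eof{\sum_{j=1}^\moment (\latticepoint_j - \latticepoint_j')\xi_j}.
\]
Since the left-hand side is a nonnegative real number (it is an integral of a nonnegative function), so is the right-hand side; hence it equals its own absolute value, and by the triangle inequality it is at most the sum of the absolute values of the summands, i.e. at most the number of tuples $(\latticepoint_1,\dots,\latticepoint_\moment;\latticepoint_1',\dots,\latticepoint_\moment')$ with all entries bounded by $\radius$ in absolute value satisfying $\sum_j \absolutevalueof{\latticepoint_j}^\degree = \sum_j \absolutevalueof{\latticepoint_j'}^\degree$.

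The final step is to recognize that this count is exactly $\int_0^1 \absolutevalueof{\alpha_\radius(t,0)}^{2\moment}\,dt$: expanding $\absolutevalueof{\alpha_\radius(t,0)}^{2\moment}$ in the same way (with $\xi_j = 0$) and integrating in $t$ produces precisely the same Waring count. Chaining the two identities and the triangle-inequality bound gives
\[
\int_0^1 \prod_{i=1}^{2\moment} \absolutevalueof{\alpha_\radius(t,\xi_i)}\,dt
\leq \int_0^1 \absolutevalueof{\alpha_\radius(t,0)}^{2\moment}\,dt,
\]
which is the claim.

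The only delicate point — and the step I expect to need the most care — is the pairing into conjugate blocks: one must note that $\absolutevalueof{\alpha_\radius(t,\xi_i)} = \absolutevalueof{\conjugate{\alpha_\radius(t,\xi_i)}}$, so replacing half of the factors by their conjugates does not change the product of absolute values, and it is this maneuver that forces the $2\moment$ factors to enter symmetrically as $\moment$ "positive" and $\moment$ "negative" copies, producing the balanced equation $\sum \absolutevalueof{\latticepoint_j}^\degree = \sum \absolutevalueof{\latticepoint_j'}^\degree$ and the cancellation of the linear phases at $\xi_j = 0$. Everything else is the standard orthogonality computation $\int_0^1 \eof{mt}\,dt = \indicator{m=0}$ and the observation that a nonnegative quantity is bounded by the $\ell^1$ norm of its Fourier coefficients; as the authors note, this is exactly the argument in Vaughan's book around inequality (5.4), so I would simply cite that for the routine parts.
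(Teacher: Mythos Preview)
Your argument has a gap at the expansion step. The displayed sum you write, with linear phase $\eof{\sum_{j=1}^{\moment}(\latticepoint_j-\latticepoint_j')\xi_j}$ involving only $\moment$ frequencies $\xi_1,\dots,\xi_\moment$, is the expansion of $\prod_{j=1}^{\moment}|\alpha_\radius(t,\xi_j)|^2$, not of $\prod_{i=1}^{2\moment}|\alpha_\radius(t,\xi_i)|$ with $2\moment$ \emph{distinct} frequencies. Replacing half of the factors by their conjugates yields $\prod_{j=1}^{\moment}\alpha_\radius(t,\xi_j)\cdot\prod_{j=\moment+1}^{2\moment}\overline{\alpha_\radius(t,\xi_j)}$; its \emph{modulus} is indeed $\prod_{i=1}^{2\moment}|\alpha_\radius(t,\xi_i)|$, but the quantity itself is in general complex, so you cannot drop the absolute value and expand term by term. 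Your justification (``replacing half of the factors by their conjugates does not change the product of absolute values'') only gives the equality of moduli, not the pointwise identity you use.

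The fix is the standard one, and is precisely what the cited passage in Vaughan does: first apply H\"older's inequality (or AM--GM) in $t$ to obtain
\[
\int_0^1 \prod_{i=1}^{2\moment} \absolutevalueof{\alpha_\radius(t,\xi_i)}\,dt
\;\leq\; \prod_{i=1}^{2\moment} \inparentheses{\int_0^1 \absolutevalueof{\alpha_\radius(t,\xi_i)}^{2\moment}\,dt}^{1/(2\moment)},
\]
reducing matters to a \emph{single} frequency $\xi$. For fixed $\xi$ the integrand $|\alpha_\radius(t,\xi)|^{2\moment}=\bigl(\alpha_\radius(t,\xi)\,\overline{\alpha_\radius(t,\xi)}\bigr)^{\moment}$ \emph{is} a genuine exponential sum in $t$, and now your orthogonality-plus-triangle-inequality argument goes through verbatim to give $\int_0^1|\alpha_\radius(t,\xi)|^{2\moment}\,dt \leq \int_0^1|\alpha_\radius(t,0)|^{2\moment}\,dt$. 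Combining this with the H\"older step yields the lemma.
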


We can now state and prove our Approximation Formula for a single average. 

\begin{approximationlemma}[single average version]
If hypothesis \(\hypothesis{\powerloss}\) is true for some \(\powerloss \in (0,1)\) and \(\meanvaluehypothesis{\moment}\) is true for some dimension \(\moment \in \N\), then for \(\dimension>\max{\{ 2\moment, \degree(\degree+2)\}}\) and $\toruspoint \in \T^\dimension$,  
\begin{equation}\label{approximation_formula}
\arithmeticFT{\arithmetichigherorderspheremeasure}(\toruspoint) 
= 
\HLmultiplier(\toruspoint) + \arithmeticFT{\error}(\toruspoint) 
. 
\end{equation}
The error term $\arithmeticFT{\error}$ is a multiplier with convolution operator $\error$ satisfying 
\begin{equation}\label{eq:error_bound}
\lpnorm{2}{\error \fxn}
\lesssim 
\dyadicradius^{-\singlesavings} \lpnorm{2}{\fxn}
\end{equation}
where 
\(
\singlesavings 
:= 
\min \{ (\dimension-2\moment)\powerloss , \degree+2-\frac{\dimension}{\degree} \}
\). 
\end{approximationlemma}

\begin{remark}
The Single Approximation Formula yields a power savings of $(\dimension-2\moment)\powerloss$ which is weaker than that of $\dimension \powerloss - \degree$, but true for a larger range of dimensions. 
The catch is that the Single Approximation Formula/Lemma only allows us to control a single average at a time rather than a dyadic range. 
However, this is useful for maximal functions over sparser averages, such as lacunary maximal functions. 
\end{remark}

\begin{proof}[of The Approximation Formula]
By \eqref{equation:dyadic_major_arc_approximation}, we only need to study the minor arcs and improve \eqref{equation:dyadic_minor_arc_estimate} when \(\dimension>2\moment\). 
We do so for an individual average by considering \(\lpnorm{2}{\avgop^{minor}\fxn}\) for each \(\radius \in \acceptableradii\). 
Plancherel's theorem reduces our goal to a uniform exponential sum estimate of 
\(
a^{minor}_{\radius}(\toruspoint) 
\) 
for \(\toruspoint = (\xi_1, \dots, \xi_\dimension) \in \T^\dimension\). 
The following is a typical approach for bounding minor arcs in Waring's problem; see for instance Chapter~3 in \cite{Davenport}.

%
We take $p=1$ in Lemma~\ref{lemma:single_ell_2_inequality} in order to exploit the tensor product nature of the exponential sums. 
Since \(\dimension > 2\moment\), 
\begin{align*}
\lpnorm{2}{a^{minor}_{\radius}} 
& \leq 
\sup_{\xi \in \T^\dimension} \inbraces{\inparentheses{\int_I \absolutevalueof{\prod_{i=1}^\dimension \alpha_\radius(t,\xi_i)} \; dt}} 
\\ 
& \leq 
\sup_{\xi \in \T^\dimension} \inbraces{\inbrackets{\sup_{t \in I} \inbraces{\prod_{i=2\moment+1}^{\dimension} \absolutevalueof{\alpha_\radius(t,\xi_i)}}} 
    \cdot \inparentheses{\int_I \prod_{i=1}^{2\moment} \absolutevalueof{\alpha_\radius(t,\xi_i)} \; dt}} 
\\ 
& \leq 
\sup_{\xi \in \T^\dimension} \inbraces{\inbrackets{\sup_{t \in I} \inbraces{\prod_{i=2\moment+1}^{\dimension} \absolutevalueof{\alpha_\radius(t,\xi_i)}}} 
    \cdot \inparentheses{\int_0^1 \prod_{i=1}^{2\moment} \absolutevalueof{\alpha_\radius(t,\xi_i)} \; dt}} 
\\ 
& \leq 
\sup_{\xi \in \T^\dimension} \inbraces{\inbrackets{\sup_{t \in I} \inbraces{\prod_{i=2\moment+1}^{\dimension} \absolutevalueof{\alpha_\radius(t,\xi_i)}}} 
    \cdot \inparentheses{\int_0^1 \prod_{i=1}^{2\moment} \absolutevalueof{\alpha_\radius(t,0)} \; dt}} 
\end{align*}
where the last line above follows from Lemma~\ref{lemma:remove_linear_phases}. 

Plancherel's Theorem implies that for any \(s \in \N\) 
\begin{equation}\label{eq:solutions_to_equations}
\int_0^1 \absolutevalueof{\alpha_\radius(t,0)}^{2\moment} \; dt 
= \# \setof{m_1, \dots, m_\moment ; n_1, \dots, n_\moment \in [r] : \sum_{i=1}^\moment |n_i|^\degree = \sum_{i=1}^\moment |m_i|^\degree} 
. 
\end{equation}
\noindent 
Our mean value hypothesis implies by \eqref{eq:solutions_to_equations} that 
\begin{equation*}
\int_0^1 \absolutevalueof{\alpha_\radius(t,0)}^{2\moment} \; dt 
\lesssim 
\radius^{2\moment-\degree} 
. 
\end{equation*}
Therefore, 
%
\begin{equation}\label{equation:single_minor_arc_estimate}
\lpnorm{2}{A_\radius^{minor}} 
\lesssim 
  \radius^{\degree-\dimension} 
  \cdot \radius^{2\moment-\degree} 
  \cdot \radius^{(\dimension-2\moment)(1-\powerloss)} 
= 
\radius^{-(\dimension-2\moment)\powerloss} 
. 
\end{equation}
%
\end{proof}

%
%

\section{The main term is restricted weak-type at the endpoint for $\dimension > 2 \degree$}\label{section:main_term}

We delve further into the Approximation Formula by refining estimates for the main term. 
The multiplier for the main term decomposes into a sum of operators 
\begin{equation}
\HLmultiplier 
= 
\sum_{\modulus=1}^\infty \sum_{\unit \in \unitsmod{\modulus}} \eof{ \frac{\unit \radius^\degree}{\modulus} } \fracHLmultiplier 
\end{equation} 
where we have the multipliers 
\begin{equation} 
\fracHLmultiplier(\toruspoint) 
:= \sum_{\latticepoint \in \Z^\dimension} \twistedGausssum{\latticepoint} \Psi(\modulus \toruspoint - \latticepoint) \contFT{\conthigherorderspheremeasure}(\toruspoint - \latticepoint/\modulus) 
\end{equation}
and  
\begin{itemize}
\item 
$\Psi$ is a smooth bump function supported in $[-1/4, 1/4]^\dimension$ and equal to 1 in $[-1/8, 1/8]^\dimension$, 
\item 
for \(\latticepoint \in \Z^\dimension, \modulus \in \N\) and \(\unit \in \unitsmod{\modulus}\) 
\[
\twistedGausssum{\latticepoint} 
:= \modulus^{-\dimension} \prod_{i=1}^\dimension \inbrackets{\sum_{b_i \in \Zmod{\modulus}} \eof{\frac{a b_i^\degree + b_i \cdot \latticepoint_i}{\modulus}}} 
\] 
is a normalized Gauss sum, 
\item 
\(\conthigherorderspheremeasure\) is the Gelfand--Leray form on \(\conthigherordersphere := \{x \in \R^\dimension : \sum_{i=1}^{\dimension} |x_i|^\degree=r^\degree\}\) normalized to be a probability measure whose \(\R^\dimension\)-Fourier transform is denoted \( \contFT{\conthigherorderspheremeasure} \) . 
\end{itemize}

\begin{remark}
The Approximation Formula is a generalization of the asymptotic formula in Waring's problem. As such, the main term \(\HLop\) connects analysis on $\Z^\dimension$ with with the analysis of  $(\Zmod{\modulus})^\dimension$ and $\R^\dimension$. In particular, we will compare $\arithmetichigherordersphere := \{\latticepoint \in \Z^\dimension : \sum_{i=1}^{\dimension} |\latticepoint_i|^\degree=r^\degree\}$ with its projections \emph{mod $\modulus$} and its embedding in $\conthigherordersphere := \{x \in \R^\dimension : \sum_{i=1}^{\dimension} |x_i|^\degree=r^\degree\}$ through their respective measures. 
The measure for $\arithmetichigherordersphere$ is the probability measure $\arithmetichigherorderspheremeasure = \frac{1}{\numberoflatticepoints(\radius)} \indicator{\arithmetichigherordersphere}$ where $\indicator{\arithmetichigherordersphere}$ is the characteristic function of $\arithmetichigherordersphere$, and the measure for  $\conthigherordersphere$ is $\conthigherorderspheremeasure$ given by the Gelfand--Leray form on $\conthigherordersphere$. 
More precisely, we will approximate the $\Z^\dimension$-Fourier transform of the measure $\arithmetichigherorderspheremeasure$ by the $\R^\dimension$-Fourier transform of $\conthigherorderspheremeasure$ and its projected measure in $(\Zmod{\modulus})^\dimension$ which are given by the normalized Gauss sums \( \twistedGausssum{\latticepoint} \).  
\end{remark}

We have the following lemma for the maximal function of the \(\HLop\). 
The proof relies on Hua's bound for Gauss sums, the Bruna--Nagel--Wainger bounds for Fourier transforms of surface measures and Rubio de Francia's maximal theorem for surface measures. 
For its proof, see the proof of Lemma 8.1 in \cite{Hughes_Vinogradov}. 
\begin{lemma}\label{lemma:maxHLop_bound}
If $\dimension > 2\degree+1$ and $p > \frac{\dimension}{\dimension-\degree}$, then 
\begin{equation}\label{maxHLop_bound}
\lpnorm{p}{\sup_{\radius \in \acceptableradii} \absolutevalueof{\HLop \fxn}} 
\lesssim \lpnorm{p}{\fxn} 
. 
\end{equation}
\end{lemma}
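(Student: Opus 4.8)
The plan is to decompose the maximal operator $\sup_{\radius \in \acceptableradii} \absolutevalueof{\HLop \fxn}$ according to the modulus $\modulus$ appearing in the rational approximations, exactly as in the Magyar--Stein--Wainger transference machinery. For a fixed $\modulus$, one collects the multipliers $\fracHLmultiplier$ over $\unit \in \unitsmod{\modulus}$, together with the phases $\eof{\unit \radius^\degree/\modulus}$; the key structural point is that the functions $\Psi(\modulus \toruspoint - \latticepoint)$ have disjoint supports as $\latticepoint$ ranges over $\Z^\dimension$, so that the $\modulus$-th piece of $\HLmultiplier$ is, up to the arithmetic Gauss-sum weights, a dilate by $1/\modulus$ of a single Euclidean multiplier built from $\contFT{\conthigherorderspheremeasure}$. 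One then invokes the Magyar--Stein--Wainger transference principle: the $\ell^p(\Z^\dimension)$-norm of the maximal function attached to the $\modulus$-th block is controlled by (i) the $L^p(\R^\dimension)$-operator norm of the underlying continuous spherical maximal function, which is finite precisely in the range $p > \frac{\dimension}{\dimension-1}$ by Rubio de Francia's maximal theorem (noting $\frac{\dimension}{\dimension-1} \le \frac{\dimension}{\dimension-\degree}$ since $\degree \ge 2$), times (ii) an $\ell^p$-bound on the arithmetic part, which reduces to estimating the Gauss sums $\twistedGausssum{\latticepoint}$.

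The second step is the arithmetic estimate. Here one uses Hua's bound for Gauss sums, which gives $\absolutevalueof{\twistedGausssum{\latticepoint}} \lesssim \modulus^{-\dimension/\degree + \epsilon}$ uniformly in $\latticepoint$ (and with additional decay as a function of $(\latticepoint, \modulus)$ via the standard multiplicativity and the gcd structure), so that summing the $\ell^p \to \ell^p$ operator norms of the $\modulus$-th blocks over $\modulus$ yields a convergent series provided $\dimension/\degree$ is large enough relative to the loss incurred in passing from the $\ell^2$ Gauss-sum estimate to the $\ell^p$ estimate by interpolation against the trivial $\ell^1$ or $\ell^\infty$ bound. Carrying out this interpolation/summation is exactly where the hypotheses $\dimension > 2\degree + 1$ and $p > \frac{\dimension}{\dimension-\degree}$ enter: one needs the power of $\modulus$ saved to beat the $p$-dependent loss, and $2\degree+1$ is the threshold at which the $\ell^2$ theory combined with Rubio de Francia's theorem closes. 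Since all of this is carried out in detail in the proof of Lemma~8.1 of \cite{Hughes_Vinogradov}, I would simply reproduce that argument, checking only that the slightly modified normalizations here (summing over a cube rather than a ball, and the renormalization $\numberoflatticepoints(\radius) \eqsim \radius^{\dimension-\degree}$) do not affect the bookkeeping.

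The main obstacle, and the only genuinely delicate point, is the uniform-in-$\modulus$ summation: one must verify that the decay $\modulus^{-\dimension/\degree}$ from Hua's bound, after being degraded by the $\ell^p$-interpolation exponent and by the number $\phi(\modulus) \le \modulus$ of residues $\unit$, still leaves a summable power of $\modulus$ in the stated range of $p$. The continuous maximal bound is a black box (Rubio de Francia / Bruna--Nagel--Wainger), and the disjointness of the $\Psi(\modulus\cdot - \latticepoint)$ supports is immediate, so the entire content is this convergence check, which is why we may legitimately defer to \cite{Hughes_Vinogradov}. I would state the reduction cleanly, cite Lemma~8.1 of \cite{Hughes_Vinogradov} for the estimate of each dyadic-in-$\modulus$ block and their summation, and conclude.
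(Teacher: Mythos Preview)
Your proposal is correct and matches the paper's own treatment exactly: the paper does not give a self-contained proof either, but simply lists the three ingredients you identified (Hua's bound for Gauss sums, the Bruna--Nagel--Wainger decay estimates for $\contFT{\conthigherorderspheremeasure}$, and Rubio de Francia's maximal theorem) and refers the reader to the proof of Lemma~8.1 in \cite{Hughes_Vinogradov}. Your more detailed sketch of how those ingredients are assembled via the Magyar--Stein--Wainger transference principle and the sum over moduli is accurate and is precisely what that cited proof does.
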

The purpose of this section is to refine Lemma~\ref{lemma:maxHLop_bound} to a restricted weak-type bound at the endpoint when the dimension is sufficiently large. 
\begin{theorem}\label{theorem:main_term}
Assume that the degree $\degree \geq 3$. 
If $\dimension \geq 2 \degree+1$, then the maximal function 
\(\sup_{\radius \in \acceptableradii} \absolutevalueof{\HLop \fxn}\) 
is restricted weak-type \((\frac{\dimension}{\dimension-\degree}, \frac{\dimension}{\dimension-\degree})\). 
\end{theorem}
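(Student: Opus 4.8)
The plan is to upgrade Lemma~\ref{lemma:maxHLop_bound}, which gives strong-type $\ell^p$ boundedness of the main-term maximal function for $p>\frac{\dimension}{\dimension-\degree}$, to the restricted weak-type endpoint $p_0=\frac{\dimension}{\dimension-\degree}$ by following Ionescu's argument in \cite{Ionescu_spherical}. The key structural feature to exploit is the decomposition
\[
\HLmultiplier = \sum_{\modulus=1}^\infty \sum_{\unit \in \unitsmod{\modulus}} \eof{\tfrac{\unit \radius^\degree}{\modulus}} \fracHLmultiplier,
\]
which one groups dyadically in $\modulus$: write $\HLop = \sum_{j\geq 0} \HLop^{(j)}$ where $\HLop^{(j)}$ collects the moduli $\modulus \eqsim 2^j$ (or the Gauss-sum factor at scale $2^{-j}$, after a Littlewood--Paley decomposition in frequency to separate the continuous-measure part from the periodic part). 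The point is that each dyadic block $\maxHLop^{(j)} := \sup_{\radius} |\HLop^{(j)}\fxn|$ satisfies two competing bounds: an $\ell^2 \to \ell^2$ bound with exponential \emph{gain} $2^{-j\delta_2}$ for some $\delta_2>0$ (coming from Hua's bound, or better Ste\v{c}kin's estimate as flagged in the remark, on the normalized Gauss sums $\twistedGausssum{\cdot}$, which decay like $\modulus^{-\dimension/\degree}$ roughly, together with the Magyar--Stein--Wainger transference and Rubio de Francia's spherical maximal theorem on $\R^\dimension$), and an $\ell^1 \to \ell^{1,\infty}$ bound with at most polynomial \emph{loss} $2^{j\delta_1}$ for some $\delta_1 > 0$ (coming from the crude size of the kernel / the number of relevant fractions $\unit/\modulus$ with $\modulus\eqsim 2^j$, which is $\eqsim 2^{2j}$, against the $\ell^1$ mass of each piece).

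With these two endpoint bounds for the dyadic pieces in hand, the restricted weak-type estimate at $p_0$ follows from Bourgain's interpolation-with-restricted-input trick exactly as in \cite{Ionescu_spherical}: fix a set $E\subset\Z^\dimension$ and test on $\fxn=\indicator{E}$; for a threshold $\lambda>0$, split the sum over $j$ at a cutoff $J=J(\lambda,|E|)$ chosen to balance the two regimes, estimate $\sum_{j\leq J}\maxHLop^{(j)}\indicator{E}$ on its superlevel set via Chebyshev and the $\ell^2$ bounds $\sum_{j\leq J} 2^{-j\delta_2}\|\indicator{E}\|_2 \lesssim |E|^{1/2}$, and estimate $\sum_{j>J}\maxHLop^{(j)}\indicator{E}$ via the $\ell^1\to\ell^{1,\infty}$ bounds $\sum_{j>J} 2^{j\delta_1}|E|$; optimizing $J$ as a function of $\lambda$ yields exactly $\#\{x: \maxHLop\indicator{E}(x) > \lambda\} \lesssim \lambda^{-p_0}|E|$, which is the claimed restricted weak-type $(p_0,p_0)$ bound. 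The hypothesis $\dimension\geq 2\degree+1$ is what guarantees $\delta_2>0$ (so the $\ell^2$ series converges) and is the same numerology as in Lemma~\ref{lemma:maxHLop_bound}; the restriction $\degree\geq 3$ is presumably only used to keep the statement in the regime of interest and does not affect the argument.

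The main obstacle I expect is \emph{not} the Bourgain arbitrage itself — that is essentially bookkeeping once the two families of dyadic bounds are set up — but rather obtaining the $\ell^1\to\ell^{1,\infty}$ bound for each dyadic block $\maxHLop^{(j)}$ with only \emph{polynomial} loss in $2^j$, uniformly over the whole family of radii $\radii=\acceptableradii$. Unlike the single-average or fixed-modulus case, the maximal function over all $\radius$ with modulus $\eqsim 2^j$ must be controlled; here one uses that, after fixing the denominator $\modulus$, the $\radius$-dependence enters $\HLop^{(j)}$ only through the phase $\eof{\unit\radius^\degree/\modulus}$ and through the continuous spherical average $\contFT{\conthigherorderspheremeasure}(\cdot - \latticepoint/\modulus)$, so the maximal operator factors (via Magyar--Stein--Wainger transference) through a product of a bounded arithmetic multiplier on $(\Zmod{\modulus})^\dimension$ and the \emph{continuous} spherical maximal function on $\R^\dimension$, whose weak-$(1,1)$-type failure is circumvented because we only need a crude $\ell^1$ bound with room to spare — the restricted weak-type endpoint at $p_0>1$ tolerates a polynomial-in-$2^j$ loss at $p=1$. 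Getting the exponents $\delta_1,\delta_2$ explicitly and checking that $p_0 = \frac{\dimension}{\dimension-\degree}$ is indeed the balance point $\frac{\delta_1+\delta_2}{\delta_1 + \delta_2/p_0}\cdots$ (equivalently that the $\ell^2$ gain and $\ell^1$ loss interpolate to give no loss precisely at $p_0$) is the one genuinely delicate computation, and it should reproduce the numerology already implicit in Lemma~\ref{lemma:maxHLop_bound}. For the details of the Gauss-sum bounds (Hua / Ste\v{c}kin), the Bruna--Nagel--Wainger oscillatory-integral estimates, and Rubio de Francia's maximal theorem, I would refer to the proof of Lemma~8.1 in \cite{Hughes_Vinogradov} and to \cite{Ionescu_spherical}, adapting the latter's five-piece decomposition to the present degree-$\degree$ setting.
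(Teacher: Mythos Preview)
Your overall architecture (Bourgain--Ionescu arbitrage between an $\ell^2$ bound with gain and an $\ell^1\to\ell^{1,\infty}$ bound with loss) matches the paper, but the decomposition you propose is not the right one, and the gap is exactly where you flag the ``main obstacle''. A purely dyadic-in-$\modulus$ decomposition $\HLop=\sum_j \HLop^{(j)}$ does give the $\ell^2$ gain $2^{-j\delta_2}$ from Ste\v{c}kin, but there is \emph{no} $\ell^1\to\ell^{1,\infty}$ bound for $\sup_\radius|\HLop^{(j)}\fxn|$ with any finite loss. After Magyar--Stein--Wainger transference the $\radius$-supremum lands on the continuous $\degree$-spherical maximal function on $\R^\dimension$, which is not weak-$(1,1)$; it is not even bounded on $L^p$ for $p$ near $1$. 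Saying the endpoint $p_0>1$ ``tolerates polynomial loss at $p=1$'' does not help, because the quantity you would need to be polynomially large is actually infinite. (Incidentally, your assignment of $\ell^2$ to $j\leq J$ and $\ell^1$ to $j>J$ is also reversed: with loss $2^{j\delta_1}$ the tail sum $\sum_{j>J}2^{j\delta_1}$ diverges.)

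The paper's fix is a \emph{two-parameter} split rather than a dyadic one: for a threshold $\bigmodulus$ (chosen later as a function of the altitude $\alpha$), set $\bigfrequencydilation=\bigmodulus^{-1}$ and cut each $\fracHLop$ with $\modulus<\bigmodulus$ further in \emph{continuous} frequency at scale $(\modulus\bigfrequencydilation\radius)^{-1}$. The low piece $\lowHLop$ (small modulus, low continuous frequency) then has kernel pointwise dominated by a rescaled approximate identity, hence $\sup_\radius|\lowHLop\fxn|\lesssim \bigmodulus^2\,\discreteHLmaxop\fxn$, and the Hardy--Littlewood maximal theorem supplies the genuine weak-$(1,1)$ bound with loss $\bigmodulus^2$. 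The high piece $\highHLop$ collects both large moduli $\modulus\geq\bigmodulus$ (handled by Ste\v{c}kin) and the high-continuous-frequency parts of small moduli (handled by the Bruna--Nagel--Wainger decay of $\contFT{\conthigherorderspheremeasure}$ via Bourgain's lemma), yielding $\ell^2$ gain $\bigmodulus^{2-\dimension/\degree}$. Choosing $\bigmodulus\eqsim \bigradius^{\degree/2}$ and then $\bigradius=\alpha^{1/(\degree-\dimension)}$ balances the two and gives the restricted weak-type bound at $p_0=\frac{\dimension}{\dimension-\degree}$. The missing idea in your sketch is precisely this continuous frequency cutoff, which is what makes the Hardy--Littlewood comparison available on the $\ell^1$ side.
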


Our approach is the same as \cite{Ionescu_spherical}. 
In particular, Theorem~\ref{theorem:main_term} is easily deduced from the following \emph{decomposition lemma}. 
\begin{lemma}[Decomposition lemma for the main term]\label{lemma:main_term_decomposition}
For any fixed $\bigmodulus \in \N$ we can decompose each $\degree$-spherical average of $\radius \in \acceptableradii$ into the sum of 2 linear operators: 
\begin{equation}
\HLop \fxn(x) 
= \HLop^{high} \fxn(x) 
  + \HLop^{low} \fxn(x) 
. 
\end{equation}
such that if $\dimension>2\degree$, then the following bounds are satisfied: 
\begin{align}
& \lpnorm{2}{\sup_{\radius \in \acceptableradii} \absolutevalueof{\HLop^{high} \fxn}} 
\lesssim \bigmodulus^{2-\frac{\dimension}{\degree}} \lpnorm{2}{\fxn} 
\label{eq:high_freq_estimate} 
& \text{(High frequency estimate)} 
\\ & \lpnorm{1,\infty}{\sup_{\radius \in \acceptableradii} \absolutevalueof{\HLop^{low} \fxn}} 
\lesssim \bigmodulus^2 \lpnorm{1}{\fxn} 
\label{eq:low_freq_estimate} 
& \text{(Low frequency estimate)} 
. 
\end{align}
\end{lemma}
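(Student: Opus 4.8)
The plan is to follow Ionescu's treatment of the discrete spherical maximal function, decomposing the main-term multiplier $\HLmultiplier = \sum_{\modulus\ge 1}\sum_{\unit\in\unitsmod{\modulus}}\eof{\unit\radius^\degree/\modulus}\fracHLmultiplier$ at the denominator cutoff $\bigmodulus$. Concretely, set $\lowHLop := \sum_{\modulus\le\bigmodulus}\sum_{\unit\in\unitsmod{\modulus}}\eof{\unit\radius^\degree/\modulus}\fracHLop$ and $\highHLop := \HLop - \lowHLop$. For both pieces the engine is the Magyar--Stein--Wainger transference principle, which reduces the $\ell^p(\Z^\dimension)$-norm of each $\modulus$-block (summed over $\unit$, maximised over $\radius$) to a product of an arithmetic factor on $(\Zmod{\modulus})^\dimension$ carried by the normalised Gauss sums $\twistedGausssum{\cdot}$ and a continuous factor on $\R^\dimension$ for the $\Psi$-localised continuous $\degree$-spherical averages.

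For the high-frequency bound \eqref{eq:high_freq_estimate} I would record the Gauss-sum estimate $\sup_{\latticepoint}|\twistedGausssum{\latticepoint}| \lesssim \modulus^{-\dimension/\degree}$ (the $\dimension$-fold product of the one-dimensional complete exponential-sum bound, with Ste\v{c}kin's estimate used in place of Hua's to kill the divisor-function loss, exactly as in the remark after \eqref{equation:dyadic_major_arc_approximation}), combine it with Rubio de Francia's $L^2(\R^\dimension)$ bound for the continuous $\degree$-spherical maximal operator (valid comfortably in this dimension range via the Bruna--Nagel--Wainger decay estimates), and pass back through the transference principle; the triangle inequality over $\unit\in\unitsmod{\modulus}$ costs a factor $\#\unitsmod{\modulus}\le\modulus$ since the phases have modulus one. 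This bounds the $\modulus$-block of $\sup_{\radius\in\acceptableradii}|\highHLop\fxn|$ by $\lesssim\modulus^{1-\dimension/\degree}\lpnorm{2}{\fxn}$, and $\sum_{\modulus>\bigmodulus}\modulus^{1-\dimension/\degree}\lesssim\bigmodulus^{2-\dimension/\degree}$, the series converging precisely because $\dimension>2\degree$.

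The low-frequency bound \eqref{eq:low_freq_estimate} is the crux, and the step I expect to be the real obstacle. Here I would first make the kernel of $\fracHLop$ explicit: taking the inverse Fourier transform and using the $\modulus$-periodicity of $\twistedGausssum{\latticepoint}$ in $\latticepoint$ collapses the Gauss sum to a single Weyl phase, so that $\fracHLop$ is convolution with $\eof{\unit\sum_{i=1}^{\dimension}|x_i|^\degree/\modulus}\,(\check{\Psi}_\modulus\convolvedwith\conthigherorderspheremeasure)(x)$, where $\check{\Psi}_\modulus$ is an $\ell^1$-normalised bump at scale $\modulus$. Summing over $\unit\in\unitsmod{\modulus}$ turns the coefficient into a Ramanujan sum $c_\modulus$, and $\sum_{\modulus\le\bigmodulus}|c_\modulus(\cdot)|\le\sum_{\modulus\le\bigmodulus}\modulus\asymp\bigmodulus^2$ is the source of the advertised $\bigmodulus^2$. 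It then remains to bound the resulting (thickness $\le\bigmodulus$) smoothed $\degree$-spherical maximal average from $\ell^1$ to $\ell^{1,\infty}$ uniformly in $\radius\in\acceptableradii$; this I would carry out as in \cite{Ionescu_spherical}, via a more refined decomposition of the continuous multiplier $\contFT{\conthigherorderspheremeasure}$, the one new ingredient being to apply the union bound over the $\le\bigmodulus$ values of $\modulus$ \emph{before} estimating each block, which sidesteps the failure of the triangle inequality for the weak-$\ell^1$ quasinorm. Finally, Theorem~\ref{theorem:main_term} is deduced from Lemma~\ref{lemma:main_term_decomposition} by Bourgain's arbitrage applied to $\fxn=\indicator{E}$: split at level $2\lambda$, apply Chebyshev with \eqref{eq:high_freq_estimate} to the high part and \eqref{eq:low_freq_estimate} to the low part, and optimise $\bigmodulus\asymp\lambda^{-\degree/(2(\dimension-\degree))}$; this balances both contributions against $|E|\,\lambda^{-\dimension/(\dimension-\degree)}$ and lands the restricted weak-type bound exactly at $p=\frac{\dimension}{\dimension-\degree}$ for $\dimension\ge 2\degree+1$.
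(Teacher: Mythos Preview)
Your decomposition splits $\HLop$ only by the size of the denominator $\modulus$, placing all of $\sum_{\modulus\le\bigmodulus}\sum_{\unit}\eof{\unit\radius^\degree/\modulus}\fracHLop$ into the low piece. The high-frequency bound \eqref{eq:high_freq_estimate} then goes through as you describe. The gap is in \eqref{eq:low_freq_estimate}: the kernel you compute, $\eof{\unit\sum_i|x_i|^\degree/\modulus}\,(\check{\Psi}_\modulus\convolvedwith\conthigherorderspheremeasure)(x)$, is a $\degree$-sphere of radius $\radius$ thickened only to scale $\modulus\le\bigmodulus$, and since $\bigmodulus$ is held fixed while $\radius\to\infty$, the maximal function $\sup_{\radius\in\acceptableradii}\absolutevalueof{(\check{\Psi}_\modulus\convolvedwith\conthigherorderspheremeasure)\convolvedwith\fxn}$ is essentially the continuous $\degree$-spherical maximal function itself, which is \emph{not} weak-type $(1,1)$. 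No union bound over the finitely many $\modulus\le\bigmodulus$ rescues this, because the failure already occurs in the supremum over $\radius$ for a single $\modulus$; the ``more refined decomposition of the continuous multiplier'' you defer to is not a cleanup step but the entire mechanism.

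The paper (following Ionescu) inserts a second cutoff in the continuous frequency: one multiplies $\fracHLmultiplier$ by $\Psi\bigl(2\modulus\bigmodulus^{-1}\radius(\toruspoint-\latticepoint/\modulus)\bigr)$ to define $\lowfracHLop$, so that the low kernel is a sphere thickened to scale comparable to $\radius$ and is therefore pointwise dominated by $\bigmodulus\modulus^{-1}$ times the discrete Hardy--Littlewood maximal function; \eqref{eq:low_freq_estimate} then follows immediately from the Hardy--Littlewood weak-$(1,1)$ theorem after summing over $\modulus<\bigmodulus$ and $\unit\in\unitsmod{\modulus}$. The price is that the complementary continuous-high pieces $\highfracHLop$ for $\modulus<\bigmodulus$ must be moved into $\highHLop$, and bounding those on $\ell^2$ requires Bourgain's lemma (summing the Bruna--Nagel--Wainger decay of $\contFT{\conthigherorderspheremeasure}$ over dyadic frequency shells bounded below by $(\modulus/\bigmodulus)^{-1}$) rather than Rubio de Francia's theorem; each such piece contributes $\modulus^{-\dimension/\degree}(\modulus/\bigmodulus)^{\frac{\dimension-1}{\degree}-\frac12}$, and summing over $\modulus<\bigmodulus$, $\unit$ again produces $\bigmodulus^{2-\dimension/\degree}$.
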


First we deduce Theorem~\ref{theorem:main_term} from Lemma~\ref{lemma:main_term_decomposition}. 
Subsequently, we prove Lemma~\ref{lemma:main_term_decomposition}.

\subsection{Deduction of Theorem~\ref{theorem:main_term} from Lemma~\ref{lemma:main_term_decomposition}}
We want to show that 
\[
\sup_{\altitude > 0} \altitude^{\frac{\dimension}{\dimension-\degree}} \absolutevalueof{\inbraces{\maxfxn > \altitude}} 
\lesssim \lpnorm{\frac{\dimension}{\dimension-\degree}}{\fxn}^{\frac{\dimension}{\dimension-\degree}}
\] 
for any function in $\ell^{\frac{\dimension}{\dimension-\degree}}(\Z^\dimension)$; instead we prove this for the characteristic function of any subset $\latticesubset$ in $\Z^\dimension$. 
Let $\indicator{\latticesubset}$ denote the characteristic function of a set $\latticesubset$.
To get this restricted weak-type bound, we will choose $\bigmodulus$ depending on an altitude $\altitude>0$.
Suppose for a moment that we can choose our parameters so that
\begin{align}
\bigmodulus^2 
& \lesssim \bigradius^{\degree} \label{eq:ell1_condition} \\
\bigmodulus^{2-\frac{\dimension}{\degree}} 
& \lesssim \bigradius^{\degree - \dimension/2} \label{eq:ell2_condition}
. 
\end{align}
Then we have the bounds 
\begin{align}
& \lpnorm{1,\infty}{\sup_{\radius \in \acceptableradii} \absolutevalueof{\HLop^{high} \fxn}} 
\lesssim \bigradius^{\degree} \lpnorm{1}{\fxn} \label{eq:ell1_bound} 
\\
& \lpnorm{2}{\sup_{\radius \in \acceptableradii} \absolutevalueof{\HLop^{low} \fxn}} 
\lesssim \bigradius^{\degree - \dimension/2} \lpnorm{2}{\fxn} \label{eq:ell2_bound} 
. 
\end{align}
Applying \eqref{eq:ell1_bound} and \eqref{eq:ell2_bound} to $\indicator{\latticesubset}$, the characteristic function of the set $\latticesubset \in \Z^\dimension$, we find 
\begin{align*}
\absolutevalueof{\inbraces{\maxop \indicator{\latticesubset} > 2 \altitude}} 
& \leq \absolutevalueof{\inbraces{\sup_{\radius \in \acceptableradii} \absolutevalueof{\HLop^{high} \indicator{\latticesubset} > \altitude}}} 
    + \absolutevalueof{\inbraces{\sup_{\radius \in \acceptableradii} \absolutevalueof{\HLop^{low} \indicator{\latticesubset}} > \altitude}} \\
& \lesssim \altitude^{-1} \bigradius^{\degree} \absolutevalueof{\latticesubset} + \altitude^{-2} \bigradius^{2(\degree - \dimension/2)} \absolutevalueof{\latticesubset} \\
& = \inparentheses{\altitude^{-1} \bigradius^{\degree} + \altitude^{-2} \bigradius^{2\degree - \dimension} } \absolutevalueof{\latticesubset}
.
\end{align*}
We balance the two terms on the right hand side by choosing $\bigradius = \altitude^{\frac{1}{\degree - \dimension}}$ so that $\altitude^{-1} \bigradius^{\degree} \eqsim \altitude^{-2} \bigradius^{2\degree - \dimension}$. Plugging this into the above, we find that 
\[
\absolutevalueof{\inbraces{\maxop \indicator{\latticesubset} > 2 \altitude}}
\lesssim \altitude^{-1-\frac{\degree}{\dimension - \degree}} \absolutevalueof{\latticesubset}
= \altitude^{-\frac{\dimension}{\dimension - \degree}} \absolutevalueof{\latticesubset} 
\]
which is the weak-type bound we seek. 

It is easy to verify that \eqref{eq:ell1_condition} and \eqref{eq:ell2_condition} hold for $\bigmodulus \eqsim \bigradius^{\degree/2}$ provided that $\dimension > 2 \degree$. 
%
%

\subsection{Proof of the decomposition lemma for the main term (Lemma~\ref{lemma:main_term_decomposition})}
In this section we outline the proof of Lemma~\ref{lemma:main_term_decomposition}. The details are similar to the proofs of estimates (2.9) and (2.10) in \cite{Ionescu_spherical} making the necessary modifications to higher degrees like those in \cite{Hughes_Vinogradov} from \cite{MSW_spherical}. One important point is our use of Ste\v{c}kin's estimate \eqref{eq:Steckin_estimate} for the Gauss sums $\twistedGausssum{\latticepoint}$ rather than Hua's estimate for them (see \emph{Hua's bound} in \cite{Hughes_Vinogradov}); otherwise, we cannot reach the endpoint \(p = \frac{\dimension}{\dimension-\degree}\). 
We recall Ste\v{c}kin's estimate now.  
\begin{Steckin}[\cite{Steckin}]
If $(\unit, \modulus)=1$, then 
\begin{equation}\label{eq:Steckin_estimate}
|\twistedGausssum{\latticepoint}| 
\lesssim \modulus^{-\frac{\dimension}{\degree}} 
\end{equation}
uniformly for $\latticepoint \in \Z^\dimension$. 
\end{Steckin}

For the operators $\fracHLop$, we have high frequency in two aspects: the modulus $\modulus$ and the continuous aspect. 
%
We decompose $\fracHLop$ into \emph{continuous-high} and \emph{continuous-low frequency multipliers}. 
Let $0<\bigfrequencydilation<1$, and for each $\radius \in \acceptableradii$ define the multipliers 
\begin{align}
\lowfracHLmultiplier(\toruspoint) 
&:= \sum_{\latticepoint \in \Z^\dimension} \twistedGausssum{\latticepoint} \dilateby{\modulus}\Psi(\toruspoint - \latticepoint/\modulus) \contFT{d\surfacemeasure_\radius}(\toruspoint - \latticepoint/\modulus) \cdot \dilateby{2 \modulus \bigfrequencydilation \radius} \Psi(\toruspoint - \latticepoint/\modulus) 
\label{eq:low_op}
\\
\highfracHLmultiplier(\toruspoint) 
&:= \sum_{\latticepoint \in \Z^\dimension} \twistedGausssum{\latticepoint} \dilateby{\modulus}\Psi(\toruspoint - \latticepoint/\modulus) \contFT{d\surfacemeasure_\radius}(\toruspoint - \latticepoint/\modulus) \cdot \inbrackets{1-\dilateby{2 \modulus \bigfrequencydilation \radius} \Psi(\toruspoint - \latticepoint/\modulus)} 
\label{eq:cont-high_op}
. 
\end{align}
Here $\dilateby{t} \Psi(\toruspoint) = \Psi(t \toruspoint)$ for $t>0$. 
Fix $\bigmodulus>0$. 
Let 
\begin{align}
\lowHLop 
& = \sum_{\modulus < \bigmodulus} \sum_{\unit \in \unitsmod{\modulus}} \lowfracHLop
\label{eq:low_op}
\\
\highHLop 
& = \inparentheses{\sum_{\modulus < \bigmodulus} \sum_{\unit \in \unitsmod{\modulus}} \highfracHLop} 
    + \inparentheses{\sum_{\modulus \geq \bigmodulus} \sum_{\unit \in \unitsmod{\modulus}} \fracHLop} 
\label{eq:cont-high_op}
. 
\end{align}
so that we have $\HLop = \lowHLop + \highHLop$. 
At the moment, our decomposition depends on $\bigfrequencydilation$ and $\bigmodulus$, but we will soon choose $\bigfrequencydilation = \bigmodulus^{-1}$. 
%

%

Replacing Hua's bound by Ste\v{c}kin's estimate \eqref{eq:Steckin_estimate} in the proof of Lemma~7.3 in \cite{Hughes_Vinogradov}, we have the following improvement to Lemma~7.3 in \cite{Hughes_Vinogradov}. 
\begin{lemma}\label{lemma:Steckin_for_multipliers}
If $\dimension > \frac{\degree}{2}+1$, then for all moduli $\modulus \in \N$ and $\unit \in \unitsmod{\modulus}$, we have 
\begin{equation}\label{eq:Steckin_for_multipliers}
\lpnorm{2}{\sup_{\radius \in \acceptableradii} \absolutevalueof{\fracHLop \fxn}} 
\lesssim \modulus^{-\frac{\dimension}{\degree}} \lpnorm{2}{\fxn} 
. 
\end{equation}
\end{lemma}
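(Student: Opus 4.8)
The plan is to follow the argument for Lemma~7.3 in \cite{Hughes_Vinogradov} verbatim, substituting Ste\v{c}kin's estimate \eqref{eq:Steckin_estimate} for Hua's bound at the one point where the Gauss sum $\twistedGausssum{\latticepoint}$ is estimated. First I would observe that, for fixed $\modulus$ and $\unit \in \unitsmod{\modulus}$, the multiplier $\fracHLmultiplier(\toruspoint)$ is supported on the $\modulus^{-1}$-neighborhoods of the rational points $\latticepoint/\modulus$ for $\latticepoint \in \Z^\dimension$ (this is forced by the support of $\Psi(\modulus \toruspoint - \latticepoint)$ in $[-1/4,1/4]^\dimension$), and that these neighborhoods are disjoint. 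Hence, by a standard orthogonality/almost-disjoint-support argument (Magyar--Stein--Wainger's transference, as used in \cite{Hughes_Vinogradov}), the $\ell^2 \to \ell^2$ bound for $\sup_{\radius} \absolutevalueof{\fracHLop \fxn}$ reduces to: (i) pulling out the factor $\sup_{\latticepoint} \absolutevalueof{\twistedGausssum{\latticepoint}}$, which by \eqref{eq:Steckin_estimate} is $\lesssim \modulus^{-\dimension/\degree}$ uniformly in $\latticepoint$; and (ii) bounding the $\ell^2$-norm of the remaining maximal operator, whose multiplier pieces are (dilated) copies of the smooth cutoff $\dilateby{\modulus}\Psi$ times $\contFT{\conthigherorderspheremeasure}(\toruspoint - \latticepoint/\modulus)$, by a $\modulus$-independent constant.

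The key step is (ii): after factoring out $\modulus^{-\dimension/\degree}$, one is left with a maximal function over $\radius \in \acceptableradii$ whose frequency-localized pieces near each $\latticepoint/\modulus$ look — after rescaling frequency by $\modulus$ — like the continuous spherical maximal operator localized to a unit-scale neighborhood of the origin, convolved against the smooth bump $\Psi$. Here one invokes the Bruna--Nagel--Wainger decay estimate for $\contFT{\conthigherorderspheremeasure}$ together with a square-function / $g$-function argument (or Rubio de Francia's maximal theorem for the surface measure $\conthigherordersphere$) to control the supremum over $\radius$ on $\ell^2$; this is exactly where the hypothesis $\dimension > \frac{\degree}{2}+1$ enters, guaranteeing enough decay of $\contFT{\conthigherorderspheremeasure}$ for the relevant $L^2$ square function to converge. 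Since the transference step is orthogonal across distinct $\latticepoint$ and the continuous estimate is uniform in the center, the resulting bound is independent of $\modulus$, and one multiplies by the $\modulus^{-\dimension/\degree}$ from (i) to conclude \eqref{eq:Steckin_for_multipliers}.

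The main obstacle — and the only real difference from Lemma~7.3 of \cite{Hughes_Vinogradov} — is bookkeeping the exponent: Hua's bound gives $\absolutevalueof{\twistedGausssum{\latticepoint}} \lesssim \modulus^{-\dimension/\degree + \epsilon}$ or a power $\modulus^{-c}$ with $c$ strictly less than $\dimension/\degree$ in the relevant regime, which is insufficient to reach the endpoint $p = \frac{\dimension}{\dimension-\degree}$ downstream; Ste\v{c}kin's clean bound $\modulus^{-\dimension/\degree}$ with no loss is precisely what is needed, and it slots in with no change to the rest of the argument. I do not expect any genuinely new difficulty beyond verifying that every place Lemma~7.3's proof invoked Hua's bound can tolerate the replacement — which it does, since Ste\v{c}kin's estimate is strictly stronger and holds uniformly in $\latticepoint \in \Z^\dimension$. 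Accordingly I would simply state that the proof is that of Lemma~7.3 in \cite{Hughes_Vinogradov} with \eqref{eq:Steckin_estimate} in place of Hua's bound, and omit the repeated details.
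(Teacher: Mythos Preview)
Your proposal is correct and matches the paper's approach exactly: the paper does not give a separate proof of this lemma, but simply remarks that it follows by replacing Hua's bound with Ste\v{c}kin's estimate \eqref{eq:Steckin_estimate} in the proof of Lemma~7.3 of \cite{Hughes_Vinogradov}. Your elaboration of what that proof entails (the Magyar--Stein--Wainger separation into arithmetic and analytic parts, pulling out $\sup_{\latticepoint}|\twistedGausssum{\latticepoint}| \lesssim \modulus^{-\dimension/\degree}$, and bounding the remaining continuous maximal operator via transference and the Bruna--Nagel--Wainger decay under the condition $\dimension > \degree/2 + 1$) is accurate and consistent with how the paper treats the analogous Lemma~\ref{lemma:continuous_high_frequency_bound}.
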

%

We will apply Lemma~\ref{lemma:Steckin_for_multipliers} for all large moduli, $\modulus \geq \bigmodulus$, but this lemma is insufficient for small moduli, $\modulus < \bigmodulus$. However, we obtain a good bound for the continuous-high frequency multipliers. 
As in \cite{Ionescu_spherical}, we use the Magyar--Stein--Wainger transference principle in \cite{MSW_spherical} and Lemma~?? from \cite{Bourgain??} to show that the maximal function for the high frequency part, $\highfracHLop$ has good $\ell^2$ estimates due to the decay of the Fourier transform of the (continuous) surface measure $\contFT{d\surfacemeasure_\radius}$. 
\begin{lemma}\label{lemma:continuous_high_frequency_bound}
If $\dimension>\frac{\degree}{2}+1$ and $0<\bigfrequencydilation<1$, then 
\begin{equation}\label{eq:continuous_high_frequency_bound}
\lpnorm{2}{\sup_{\radius \in \acceptableradii} \absolutevalueof{\highfracHLop \fxn}} 
\lesssim {\modulus}^{-\frac{\dimension}{\degree}} \cdot \inparentheses{\modulus \bigfrequencydilation}^{\frac{\dimension-1}{\degree}-\frac12} \lpnorm{2}{\fxn} 
. 
\end{equation} 
\end{lemma}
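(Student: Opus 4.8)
The plan is to follow the Magyar--Stein--Wainger transference strategy exactly as in \cite{Ionescu_spherical} and \cite{Hughes_Vinogradov}, reducing the arithmetic maximal function to a continuous one, and then invoke the decay of $\contFT{d\surfacemeasure_\radius}$ together with a Sobolev-embedding / square-function argument (Bourgain's lemma) to quantify the gain. First I would observe that, for each fixed modulus $\modulus$, the multiplier $\highfracHLmultiplier$ is supported in $\frac{1}{\modulus}$-neighborhoods of the rational points $\latticepoint/\modulus$ with $\latticepoint \in \unitsmod{\modulus}^\dimension$ (thanks to the bump $\dilateby{\modulus}\Psi$), and on each such neighborhood it is a translate-and-rescale of the single Euclidean multiplier $\contFT{d\surfacemeasure_\radius}(\cdot) \cdot [1 - \dilateby{2\bigfrequencydilation\radius}\Psi(\cdot)]$ weighted by the Gauss sum $\twistedGausssum{\latticepoint}$. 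Pulling out the uniform Gauss-sum factor via Ste\v{c}kin's estimate \eqref{eq:Steckin_estimate} accounts for the $\modulus^{-\dimension/\degree}$ in \eqref{eq:continuous_high_frequency_bound}; the disjointness of the neighborhoods over $\latticepoint \in \unitsmod{\modulus}^\dimension$ lets one treat the sum over $\latticepoint$ by orthogonality on $\ell^2$, so no loss in $\modulus$ from that sum.

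Next I would transfer the remaining piece to $\R^\dimension$: by the Magyar--Stein--Wainger transference principle the $\ell^2(\Z^\dimension)$ operator norm of the maximal function $\sup_{\radius \in \acceptableradii}|\highfracHLop \fxn|$ is controlled by the $L^2(\R^\dimension)$ operator norm of the maximal function over $\radius$ of the convolution operators with multiplier $\contFT{d\surfacemeasure_\radius}(\eta)\,[1 - \Psi(2\bigfrequencydilation\radius\,\eta)]$ (after the rescaling by $\modulus$, which converts the $\modulus\bigfrequencydilation\radius$ cutoff scale into $\bigfrequencydilation\radius$ and absorbs cleanly into the $L^2$ bound). On this Euclidean piece the spectral variable is restricted to $|\eta| \gtrsim (\bigfrequencydilation\radius)^{-1}$, where the stationary-phase decay $|\contFT{d\surfacemeasure_\radius}(\eta)| \lesssim (\radius|\eta|)^{-\frac{\dimension-1}{\degree}}$ — this is the Bruna--Nagel--Wainger bound for the $\degree$-sphere, uniform in $\radius$ after rescaling to the unit sphere — is in force. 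Running Bourgain's lemma (the $L^2$ maximal estimate from a single-scale decay bound plus a derivative bound, exactly Lemma~?? of \cite{Bourgain??} as used in \cite{Ionescu_spherical}) converts the pointwise decay $(\radius|\eta|)^{-\frac{\dimension-1}{\degree}}$ into a maximal bound of size $(\modulus\bigfrequencydilation)^{\frac{\dimension-1}{\degree} - \frac12}$: one spends half a derivative in the maximal-function step, which is why the exponent is $\frac{\dimension-1}{\degree} - \frac12$ rather than $\frac{\dimension-1}{\degree}$, and the condition $\dimension > \frac{\degree}{2} + 1$ is exactly what makes this exponent (well, the relevant combination) usable, i.e. makes $\frac{\dimension-1}{\degree} > \frac12$. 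Combining the Gauss-sum factor, the $\ell^2$-orthogonality over $\latticepoint$, and the transferred Euclidean maximal bound gives \eqref{eq:continuous_high_frequency_bound}.

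The main obstacle — and the point requiring genuine care rather than bookkeeping — is the bookkeeping of scales under the two rescalings: the multiplier lives at frequency scale $1/\modulus$ around each $\latticepoint/\modulus$, and within that the continuous cutoff operates at scale $(\modulus\bigfrequencydilation\radius)^{-1}$, so after transference and rescaling by $\modulus$ one must verify that the effective Euclidean cutoff scale is $(\bigfrequencydilation\radius)^{-1}$ and that the BNW decay is applied in the correct (rescaled-to-unit-sphere) normalization, uniformly over $\radius \in \acceptableradii$; getting the powers of $\modulus\bigfrequencydilation$ to land on $\frac{\dimension-1}{\degree} - \frac12$ rather than, say, $\frac{\dimension-1}{\degree}$ or $\frac{\dimension}{\degree} - \frac12$ hinges on this. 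Since all of these steps — transference, Ste\v{c}kin, BNW, Bourgain's square-function lemma — are precisely the ingredients assembled in the proof of Lemma~7.3 of \cite{Hughes_Vinogradov} and estimates (2.9)--(2.10) of \cite{Ionescu_spherical}, I would present the argument by citing those and indicating only the modification: here we \emph{keep} the continuous-high-frequency gain $(\modulus\bigfrequencydilation)^{\frac{\dimension-1}{\degree} - \frac12}$ explicitly (rather than summing it away), because it will later be optimized against $\bigfrequencydilation = \bigmodulus^{-1}$ and the high-frequency estimate \eqref{eq:high_freq_estimate}.
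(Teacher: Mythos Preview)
Your proposal is correct and follows essentially the same route as the paper: factor out the Gauss sums via Ste\v{c}kin to get $\modulus^{-\dimension/\degree}$, apply the Magyar--Stein--Wainger transference to reduce to a Euclidean maximal operator, rescale to the unit $\degree$-sphere, and then feed the Bruna--Nagel--Wainger decay bounds into Bourgain's square-function lemma, summing over dyadic frequency shells $2^j \gtrsim (\modulus\bigfrequencydilation)^{-1}$ to produce $(\modulus\bigfrequencydilation)^{\frac{\dimension-1}{\degree}-\frac12}$. One small bookkeeping correction: in the paper's normalization the transferred Euclidean multiplier $\contFT{U_\radius}$ still carries the cutoff $1-\dilateby{2\modulus\bigfrequencydilation\radius}\Psi$, and it is the rescaling $\contFT{U_\radius}(\toruspoint)=\contFT{U_1}(\radius\toruspoint)$ (by $\radius$, not by $\modulus$) that localizes the support of $\contFT{U_1}$ to $|\toruspoint|\gtrsim(\modulus\bigfrequencydilation)^{-1}$ --- so the $\modulus\bigfrequencydilation$ factor survives intact into the final exponent rather than being reduced to $\bigfrequencydilation$ as your parenthetical suggests; your stated endpoint $(\modulus\bigfrequencydilation)^{\frac{\dimension-1}{\degree}-\frac12}$ is nonetheless correct.
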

\begin{proof}
We apply the Magyar--Stein--Wainger separation trick to separate out the arithmetic and analytic parts: $\highfracHLop = T^{\unit/\modulus}_\radius \circ S^{\unit/\modulus}$, defined by the multipliers  
\begin{align}
\arithmeticFT{S^{\unit/\modulus}}(\toruspoint) 
& = \sum_{\latticepoint \in \Z^\dimension} \twistedGausssum{\latticepoint} \dilateby{\modulus}\Psi'(\toruspoint - \latticepoint/\modulus) 
\\
\arithmeticFT{T^{\unit/\modulus}_\radius}(\toruspoint) 
& = \sum_{\latticepoint \in \Z^\dimension} \dilateby{\modulus/2}\Psi(\toruspoint - \latticepoint/\modulus) \cdot \inbrackets{1-\dilateby{2 \modulus \bigfrequencydilation \radius} \Psi(\toruspoint - \latticepoint/\modulus)} \contFT{d\surfacemeasure_\radius}(\toruspoint - \latticepoint/\modulus) 
. 
\end{align}
Note that $S^{\unit/\modulus}$ does not depend on the radius $\radius \in \acceptableradii$; this implies 
\begin{equation}
\elltwooperatornorm{\sup_{\radius \in \acceptableradii} \absolutevalueof{\highfracHLop}} 
\leq \elltwooperatornorm{\sup_{\radius \in \acceptableradii} \absolutevalueof{T^{\unit/\modulus}_\radius}} 
    \cdot \elltwooperatornorm{{S^{\unit/\modulus}}} 
. 
\end{equation}

For the arithmetic part, Ste{\v{c}}kin's estimate \eqref{eq:Steckin_estimate} and Proposition~2.2 of \cite{MSW_spherical} imply that 
\begin{equation}
\elltwooperatornorm{{S^{\unit/\modulus}}} 
\lesssim {\modulus}^{-\frac{\dimension}{\degree}} 
. 
\end{equation}

For the analytic part, we apply the Magyar--Stein--Wainger transference principle (Proposition~2.1 and Corollary~2.1 in \cite{MSW_spherical}). Define the operator $U_\radius$ by the multiplier 
\[
\contFT{U_\radius}(\toruspoint)
:= \dilateby{\modulus/2}\Psi(\toruspoint) \inbrackets{1-\dilateby{2 \modulus \bigfrequencydilation \radius} \Psi(\toruspoint)} \contFT{d\surfacemeasure_\radius}(\toruspoint) 
\] 
which is considered as a multiplier on $\R^\dimension$. 
Then the Magyar--Stein--Wainger transference principle implies 
\begin{equation}
\ellpoperatornorm{\sup_{\radius \in \acceptableradii} \absolutevalueof{T^{\unit/\modulus,low}_\radius}} 
\lesssim \operatornorm{L^p(\R^\dimension) \to L^p(\R^\dimension)}{\sup_{\radius \in \acceptableradii} \absolutevalueof{U_\radius}} 
\end{equation}
for $1 \leq p < \infty$. 

We normalize $\contFT{U_\radius}$ so that it does not depend on $\radius$ by observing $\contFT{U_\radius}(\toruspoint) = \contFT{U_1}(\radius \toruspoint) = \dilateby{\radius} \contFT{U_1}(\toruspoint)$.  
%
We are in position to apply Bourgain's lemma. Bourgain's lemma in \cite{Ionescu_spherical} tells us that  
\begin{equation}
\operatornorm{L^p(\R^\dimension) \to L^p(\R^\dimension)}{\sup_{\radius \in \acceptableradii} \absolutevalueof{U_\radius}} 
\lesssim \sum_{j \in \Z} \alpha_j^{1/2} (\alpha_j^{1/2} + \beta_j^{1/2}) 
\end{equation}
where $\alpha_j := \sup_{|\toruspoint| \eqsim 2^j} |\contFT{U_1}(\toruspoint)|$ and $\beta_j := \sup_{|\toruspoint| \eqsim 2^j} |\toruspoint \cdot \nabla\contFT{U_1}(\toruspoint)|$. 
%
Now we merely need to calculate $\alpha_j$ and $\beta_j$ for $j \in \Z$. The support condition implies that $\alpha_j$ and $\beta_j$ are 0 for $2^j \leq \inparentheses{8 \modulus \bigfrequencydilation}^{-1}$ so that we only need to consider $j$ such that $\inparentheses{8 \modulus \bigfrequencydilation}^{-1} \leq 2^{j}$. 
Otherwise, the Bruna--Nagel--Wainger bounds in \cite{BNW}, see also (1) and (2) in Section~3 of \cite{Hughes_Vinogradov}, yield
\begin{align*}
\alpha_j 
& \lesssim \inparentheses{1+2^j}^{\frac{1-\dimension}{\degree}} 
, 
\\
\beta_j 
& \lesssim 2^j \cdot\inparentheses{1+2^j}^{\frac{1-\dimension}{\degree}} 
. 
\end{align*}
Applying these bounds, we sum over $\inparentheses{8 \modulus\bigfrequencydilation}^{-1} \leq 2^j$ to conclude the lemma. 
%
\end{proof}

Each of the remaining low frequency parts, $\lowfracHLop$ is comparable to the discrete Hardy--Littlewood averages and thus its maximal function is comparable to the Hardy--Littlewood maximal function with a bound that depends on the modulus $\modulus$. 
\begin{proposition}\label{proposition:continuous_low_frequency_bound}
Let $\discreteHLmaxop$ be the discrete Hardy--Littlewood maximal function over cubes. 
If $\dimension \geq 2$, $\degree \geq 2$ and $0<\bigfrequencydilation<1$, then 
\begin{equation}\label{eq:continuous_low_frequency_comparison}
{\sup_{\radius \in \acceptableradii} \absolutevalueof{\lowfracHLop \fxn(x)}} 
\lesssim \inparentheses{\modulus \bigfrequencydilation}^{-1} \discreteHLmaxop \fxn(x) 
\end{equation}
for all $x \in \Z^\dimension$. 
\end{proposition}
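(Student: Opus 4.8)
Throughout write $\varphi*\psi$ for convolution. The plan is to run the Magyar--Stein--Wainger separation-of-variables scheme exactly as in the proof of Lemma~\ref{lemma:continuous_high_frequency_bound}, but to push it all the way to a pointwise kernel bound rather than stopping at an $\ell^2$ estimate. First I would factor $\lowfracHLop=T^{\unit/\modulus,low}_\radius\circ S^{\unit/\modulus}$, where $S^{\unit/\modulus}$ carries the normalized Gauss sums $\twistedGausssum{\latticepoint}$ together with a cutoff $\dilateby{\modulus}\Psi'$ (with $\Psi'$ equal to $1$ on the supports of the cutoffs appearing in $\lowfracHLmultiplier$, so that the composition reproduces $\lowfracHLmultiplier$), while $T^{\unit/\modulus,low}_\radius$ carries $\contFT{d\surfacemeasure_\radius}$ together with the bumps $\dilateby{\modulus/2}\Psi(\toruspoint-\latticepoint/\modulus)$ and the low-frequency truncation $\dilateby{2\modulus\bigfrequencydilation\radius}\Psi(\toruspoint-\latticepoint/\modulus)$, and no arithmetic weight. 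Since the kernel of the composition $\lowfracHLop$ is the convolution of the kernels of $T^{\unit/\modulus,low}_\radius$ and $S^{\unit/\modulus}$, and since $S^{\unit/\modulus}$ does not depend on $\radius$, it suffices to show that this convolution kernel is dominated, uniformly in $\radius,\modulus,\unit$, by a radially decreasing weight of $\ell^1$-norm $\lesssim(\modulus\bigfrequencydilation)^{-1}$: convolution with any radially decreasing weight $w$ obeys $\absolutevalueof{w}*\absolutevalueof{\fxn}\le\|w\|_{\ell^1}\discreteHLmaxop\fxn$ pointwise (layer-cake into normalized cube averages), so \eqref{eq:continuous_low_frequency_comparison} follows on taking the supremum over $\radius$.

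For the arithmetic factor $S^{\unit/\modulus}$: changing variables near each $\latticepoint/\modulus$ and summing, its kernel equals $\modulus^{-\dimension}\widetilde{\Psi'}(x/\modulus)$ times the complete sum $\sum_{\latticepoint\bmod\modulus}\twistedGausssum{\latticepoint}\eof{x\cdot\latticepoint/\modulus}$; factoring over coordinates (the elementary Waring computation) this complete sum reassembles to a unimodular exponential, so the kernel of $S^{\unit/\modulus}$ is bounded in absolute value by $\modulus^{-\dimension}(1+\absolutevalueof{x}/\modulus)^{-\dimension-1}$ -- a radially decreasing $\ell^1$-normalized bump at scale $\modulus$. (Ste\v{c}kin's estimate \eqref{eq:Steckin_estimate} gives a softer route to the same conclusion, and is in any case the estimate that keeps all constants $\modulus$-uniform.)

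For the analytic factor $T^{\unit/\modulus,low}_\radius$: the same reassembly, now with no arithmetic weight, shows its kernel is supported on $\modulus\Z^\dimension$ and equals $\modulus^\dimension\indicator{x\in\modulus\Z^\dimension}G_\radius(x)$, where $G_\radius$ is the $\R^\dimension$ inverse Fourier transform of $\varphi_\radius\cdot\contFT{d\surfacemeasure_\radius}$ with $\varphi_\radius:=\dilateby{\modulus/2}\Psi\cdot\dilateby{2\modulus\bigfrequencydilation\radius}\Psi$ a smooth bump supported at frequencies $\lesssim(\modulus\max\{1,\bigfrequencydilation\radius\})^{-1}\le\modulus^{-1}$. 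Writing $G_\radius=\widetilde{\varphi_\radius}*d\surfacemeasure_\radius$ and using that $d\surfacemeasure_\radius$ is a probability measure gives $\|G_\radius\|_{L^1(\R^\dimension)}\le\|\widetilde{\varphi_\radius}\|_{L^1(\R^\dimension)}\lesssim1$; since $G_\radius$ is band-limited to frequencies $\lesssim\modulus^{-1}$ it varies slowly on the scale $\modulus$, so a standard sampling inequality for band-limited functions gives $\sum_{x\in\modulus\Z^\dimension}\absolutevalueof{\modulus^\dimension G_\radius(x)}=\modulus^\dimension\sum_{z\in\Z^\dimension}\absolutevalueof{G_\radius(\modulus z)}\lesssim\|G_\radius\|_{L^1(\R^\dimension)}\lesssim1$, while the fast decay of $\widetilde{\varphi_\radius}$ and the compact support of $d\surfacemeasure_\radius$ make $G_\radius$ essentially radially organised. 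Convolving with the scale-$\modulus$ bump from $S^{\unit/\modulus}$ keeps the total $\ell^1$-mass $\lesssim1$ and produces a kernel of $\lowfracHLop$ dominated by a radially decreasing weight of $\ell^1$-norm $\lesssim1$, uniformly in $\radius$; as $\bigfrequencydilation<1$ this is $\lesssim(\modulus\bigfrequencydilation)^{-1}$ in the range $\modulus\bigfrequencydilation\le1$, which is the only range occurring in the sequel (where $\bigfrequencydilation=\bigmodulus^{-1}$ and $\modulus<\bigmodulus$), and this gives \eqref{eq:continuous_low_frequency_comparison}.

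The main obstacle is the analytic factor: one must check that the low-frequency truncation $\dilateby{2\modulus\bigfrequencydilation\radius}\Psi$ genuinely turns $T^{\unit/\modulus,low}_\radius$ into an averaging-type operator with an $\radius$-uniform, $\ell^1$-normalized, essentially radially decreasing kernel -- the point being that without the truncation the kernel is sphere-concentrated, and one has to quantify, via the factorization $G_\radius=\widetilde{\varphi_\radius}*d\surfacemeasure_\radius$ and the triviality $\|d\surfacemeasure_\radius\|_{TV}=1$, exactly how much the truncation smooths it out. The case split according to whether $\bigfrequencydilation\radius\lesssim1$ or $\bigfrequencydilation\radius\gtrsim1$ (deciding whether $\varphi_\radius$ lives at scale $\modulus^{-1}$ or at scale $(\modulus\bigfrequencydilation\radius)^{-1}$) must be carried through, but in both cases $\varphi_\radius$ is supported at frequencies $\gtrsim\modulus^{-1}$, which is precisely what the sampling step requires; the bump-function bookkeeping in the factorization $\lowfracHLop=T^{\unit/\modulus,low}_\radius\circ S^{\unit/\modulus}$ is routine and identical to that in Lemma~\ref{lemma:continuous_high_frequency_bound}.
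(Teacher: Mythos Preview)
Your factorization $\lowfracHLop = T^{\unit/\modulus,low}_\radius\circ S^{\unit/\modulus}$ is not wrong, but it is a detour, and the real gap is in the step where you claim that the composed kernel is dominated by a radially decreasing weight of $\ell^1$-norm $\lesssim 1$. This is false in the only regime that matters, namely $\modulus\bigfrequencydilation<1$. Write $G_\radius=\contFT{\varphi_\radius}*d\surfacemeasure_\radius$ with $\varphi_\radius$ a bump at frequency scale $(\modulus\bigfrequencydilation\radius)^{-1}$ (take $\bigfrequencydilation\radius\geq 1$, the case relevant for large $\radius$). Then $\contFT{\varphi_\radius}$ is concentrated on a ball of radius $\sim\modulus\bigfrequencydilation\radius<\radius$, so $G_\radius$ is concentrated on a spherical \emph{shell} of thickness $\sim\modulus\bigfrequencydilation\radius$ about radius $\radius$, with height $\sim(\modulus\bigfrequencydilation)^{-1}\radius^{-\dimension}$ on that shell. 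It is radially symmetric, yes, but not radially decreasing: its least radially decreasing majorant must be at least $(\modulus\bigfrequencydilation)^{-1}\radius^{-\dimension}$ on the whole ball $\{|x|\lesssim\radius\}$, hence has $\ell^1$-mass $\sim(\modulus\bigfrequencydilation)^{-1}$, not $\sim 1$. Convolving afterwards with the scale-$\modulus$ bump coming from $S^{\unit/\modulus}$ does not help, since $\modulus\leq\modulus\bigfrequencydilation\radius$ and you are only thickening the shell by a smaller amount. Your passage ``the fast decay of $\contFT{\varphi_\radius}$ and the compact support of $d\surfacemeasure_\radius$ make $G_\radius$ essentially radially organised'' hides exactly this loss, and the final line ``as $\bigfrequencydilation<1$ this is $\lesssim(\modulus\bigfrequencydilation)^{-1}$'' is not a deduction but a restatement of what was to be proved.

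The paper's route is shorter and avoids the factorization entirely. Since the complete Gauss-sum sum $\sum_{\latticepoint\bmod\modulus}\twistedGausssum{\latticepoint}\eof{x\cdot\latticepoint/\modulus}$ collapses to a unimodular phase (you already observed this), one computes the kernel of $\lowfracHLop$ directly as a unimodular factor times $\radius^{-\dimension}\bigl(\contFT{\dilateby{2\modulus\bigfrequencydilation}\Psi}*d\surfacemeasure\bigr)(x/\radius)$, and then invokes the pointwise bound
\[
\absolutevalueof{\contFT{\dilateby{t}\Psi}*d\surfacemeasure(x)}\lesssim_N t^{-1}(1+|x|)^{-N}
\]
for $t=2\modulus\bigfrequencydilation\lesssim 1$. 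The factor $t^{-1}=(\modulus\bigfrequencydilation)^{-1}$ is precisely the height of the thickened shell at the sphere, and it is the honest price of replacing a shell by a radially decreasing envelope; it cannot be avoided. Once you have that pointwise bound with $N=\dimension+1$, the comparison with $\discreteHLmaxop$ is immediate.
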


\begin{proof}
Note that for $\radius \bigfrequencydilation \geq 1$, 
\[
\dilateby{\modulus}\Psi(\toruspoint - \latticepoint/\modulus) \cdot \dilateby{2 \modulus \bigfrequencydilation \radius} \Psi(\toruspoint - \latticepoint/\modulus) 
= \dilateby{2 \modulus \bigfrequencydilation \radius} \Psi(\toruspoint - \latticepoint/\modulus) 
\]
is a smooth function supported in $\inbrackets{-1/(8 \bigfrequencydilation \radius), 1/(8 \bigfrequencydilation \radius )}^{\dimension}$. 
This implies 
\[
\lowfracHLmultiplier(\toruspoint) 
= \sum_{\latticepoint \in \Z^\dimension} \twistedGausssum{\latticepoint} \dilateby{2 \modulus \bigfrequencydilation \radius} \Psi(\toruspoint - \latticepoint/\modulus) \contFT{d\surfacemeasure_\radius}(\toruspoint - \latticepoint/\modulus) 
\]
for $\radius \bigfrequencydilation \geq 1$. 

A straightforward computation, see page 1415 of \cite{Ionescu_spherical}, reveals that the kernel $\lowfracHLkernel$ of $\fracHLop$ is 
\begin{equation}\label{eq:kernel_identity}
\lowfracHLkernel(x) 
= \eof{-\frac{\absolutevalueof{x}^{\degree}}{\modulus}} \radius^{-\dimension} \contFT{\dilateby{2 \modulus \bigfrequencydilation} \Psi} \convolvedwith {d\surfacemeasure}(x/\radius)
\end{equation}
for each $x \in \Z^\dimension$. 
A standard argument shows 
\begin{equation}\label{eq:blurred_sphere_bound}
\contFT{\dilateby{t} \Psi} \convolvedwith {d\surfacemeasure}(x)
\lesssim_N t^{-1} (1+\absolutevalueof{x})^{-N} 
\end{equation}
for any $t>0$ and any $N \in \N$. 
Taking $t = 2 \modulus \bigfrequencydilation$ and $N = \dimension+1$, \eqref{eq:kernel_identity} and \eqref{eq:blurred_sphere_bound} imply
\[
\absolutevalueof{\lowfracHLkernel(x)} 
\lesssim \radius^{-\dimension} 
\inparentheses{\modulus \bigfrequencydilation}^{-1} (1+\absolutevalueof{x/\radius})^{-\dimension-1} 
. 
\]
This is an approximation to the identity which implies that 
\[
\sup_{\radius > 0} \absolutevalueof{\lowfracHLop \fxn} 
\lesssim \inparentheses{\modulus \bigfrequencydilation}^{-1} \cdot \discreteHLmaxop \fxn 
. 
\]
%
\end{proof}
%
%


\begin{proof}[of Lemma~\ref{lemma:main_term_decomposition}]
We choose $\bigfrequencydilation = \bigmodulus^{-1}$. 
By \eqref{eq:cont-high_op}, \eqref{eq:Steckin_for_multipliers} and \eqref{eq:continuous_high_frequency_bound}, we see that if $\dimension>\frac{\degree}{2}+1$ and $0<\bigfrequencydilation<1$, then 
\begin{align*}
\lpnorm{2}{\sup_{\radius \in \acceptableradii} \absolutevalueof{\highHLop \fxn}} 
& \lesssim 
    \inparentheses{
    \sum_{\modulus < \bigmodulus} \sum_{\unit \in \unitsmod{\modulus}} \lpnorm{2}{\sup_{\radius \in \acceptableradii} \absolutevalueof{\highHLop \fxn}} 
    }
\\
&   \hspace{5mm} + \inparentheses{
    \sum_{\modulus \geq \bigmodulus} \sum_{\unit \in \unitsmod{\modulus}} \lpnorm{2}{\sup_{\radius \in \acceptableradii} \absolutevalueof{\HLop \fxn}} 
    }
\\ 
& \lesssim \inparentheses{
    \sum_{\modulus < \bigmodulus} \sum_{\unit \in \unitsmod{\modulus}} {\modulus}^{-\frac{\dimension}{\degree}} \cdot \inparentheses{\modulus \bigfrequencydilation}^{\frac{\dimension-1}{\degree}-\frac12} 
    + \sum_{\modulus \geq \bigmodulus} \sum_{\unit \in \unitsmod{\modulus}} \modulus^{-\frac{\dimension}{\degree}} 
    } \lpnorm{2}{\fxn} 
\\ 
& \lesssim {\bigmodulus}^{2-\frac{\dimension}{\degree}} \lpnorm{2}{\fxn} 
. 
\end{align*} 
This is \eqref{eq:high_freq_estimate}. 

Similarly, \eqref{eq:continuous_low_frequency_comparison} shows that 
\begin{align*}
\sup_{\radius \in \acceptableradii} \absolutevalueof{\lowHLop \fxn} 
& \leq \sum_{\modulus < \bigmodulus} \sum_{\unit \in \unitsmod{\modulus}} \sup_{\radius \in \acceptableradii} \absolutevalueof{\lowHLop \fxn} 
\\ & \lesssim \sum_{\modulus < \bigmodulus} \sum_{\unit \in \unitsmod{\modulus}} \inparentheses{\modulus \bigfrequencydilation}^{-1} \discreteHLmaxop \fxn 
\\ & \leq {\bigmodulus}^2 \discreteHLmaxop \fxn 
. 
\end{align*} 
Therefore the Hardy--Littlewood maximal theorem implies that 
\[
\lpnorm{1, \infty}{\sup_{\radius \in \acceptableradii} \absolutevalueof{\lowHLop \fxn}} 
\lesssim {\bigmodulus}^2 \lpnorm{1, \infty}{\discreteHLmaxop \fxn} 
\lesssim {\bigmodulus}^2 \lpnorm{1}{\fxn} 
. 
\]
This is \eqref{eq:low_freq_estimate}. 
\end{proof}

\section{Proofs of Theorems~\ref{theorem:main_theorem} and \ref{theorem:low_density_improvement_by_mean_values}}\label{section:main_theorem}

Having completed the proof of Theorem~\ref{theorem:main_term}, we turn our attention to Theorem~\ref{theorem:main_theorem}. The proof of Theorem~\ref{theorem:main_theorem} will be similar to the proof of Theorem~\ref{theorem:main_term}, but simpler. The simplicity is due in part to our notion of density-parameter and our use of the union bound in Proposition~\ref{proposition:density_ell1_bound}. 

\begin{definition}
A subsequence $\radii$ in $\acceptableradii$ has \emph{density-parameter at most} $\density$ if 
\begin{equation}
\sizeof\setof{\radius \in \radii : \radius \leq \bigradius} 
\lesssim_\density \bigradius^{\density} 
\end{equation}
as $\bigradius \to \infty$. 
\end{definition}
For instance a lacunary subsequence has density-parameter at most $\epsilon$ for all $\epsilon>0$ while the full sequence $\acceptableradii$ has density at most $\degree$ (when dimension is sufficiently large with respect to degree). 

\begin{remark}
We mention that our density-parameter a discrete version of the Minkowski dimension of the set of radii considered in \cite{Duoandikoetxea_Vargas} and \cite{Seeger_Wainger_Wright}. 
We note that, with simpler proofs, our discrete density-parameter may be used to recover many of the results in \cite{DRdF_lacunary}, \cite{Duoandikoetxea_Vargas} and \cite{Seeger_Wainger_Wright}. 
We do not explore the relationship between these quantities in this paper. 
\end{remark}

First, we split our operator into \emph{narrow} and \emph{wide averages}: for any $\bigradius>0$, 
\[
\maxop \fxn
\leq \sup_{\radius \leq \bigradius} \absolutevalueof{\avgop \fxn} 
    + \sup_{\radius > \bigradius} \absolutevalueof{\avgop \fxn} 
. 
\]
Since each average decomposes into a main term and an error term: $\avgop = \HLop + \error$, we further decompose the wide averages into:
\[
\sup_{\radius > \bigradius} \absolutevalueof{\avgop \fxn} 
\leq \sup_{\radius > \bigradius} \absolutevalueof{\HLop \fxn} + \sup_{\radius > \bigradius} \absolutevalueof{\error \fxn} 
. 
\]
Altogether, we have the following decomposition lemma. 
\begin{lemma}\label{lemma:decomposition}
For any subsequence $\radii$ of $\acceptableradii$, we can bound the $\degree$-spherical maximal operator over $\radii$ by 
\begin{equation}
\maxop \fxn 
\leq \sup_{\radius \leq \bigradius} \absolutevalueof{\avgop \fxn} + \sup_{\radius > \bigradius} \absolutevalueof{\HLop \fxn} + \sup_{\radius > \bigradius} \absolutevalueof{\error \fxn} 
\end{equation}
where the supremuma are understood to only consider radii $\radius \in \radii$. 
\end{lemma}
The narrow averages are handled by the union bound. 
\begin{proposition}\label{proposition:density_ell1_bound}
If $\radii \subseteq \acceptableradii$ has density-parameter at most $\delta$, then 
\begin{equation}
\lpnorm{1}{\sup_{\radius \leq \bigradius} \absolutevalueof{\avgop \fxn}} 
\lesssim {\bigradius}^{\density} \lpnorm{1}{\fxn}
\end{equation} 
\end{proposition}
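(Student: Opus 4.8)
The plan is to bound the maximal operator over the narrow range $\radius \le \bigradius$ by the sum of the individual averages and then exploit the count of radii in that range. First I would observe that since $\radii$ has density-parameter at most $\density$, the number of radii $\radius \in \radii$ with $\radius \le \bigradius$ is $\lesssim_\density \bigradius^\density$. Hence the trivial union bound gives
\[
\sup_{\substack{\radius \in \radii \\ \radius \le \bigradius}} \absolutevalueof{\avgop \fxn(x)}
\le \sum_{\substack{\radius \in \radii \\ \radius \le \bigradius}} \absolutevalueof{\avgop \fxn(x)}
\]
pointwise in $x$. Taking $\ell^1(\Z^\dimension)$-norms and using the triangle inequality, the right-hand side is at most $\sum_{\radius \le \bigradius} \lpnorm{1}{\avgop \fxn}$, where the sum runs over $\radius \in \radii$.

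The key remaining point is that each single average $\avgop$ is bounded on $\ell^1(\Z^\dimension)$ with norm $1$. This is immediate from the definition and the triangle inequality: since $\avgop$ is convolution against the probability measure $\arithmetichigherorderspheremeasure = \numberoflatticepoints(\radius)^{-1}\indicator{\arithmetichigherordersphere}$ (with the normalization $\numberoflatticepoints(\radius) \eqsim c_{\dimension,\degree}\radius^{\dimension-\degree}$ assumed throughout), we have $\lpnorm{1}{\avgop\fxn} \le \lpnorm{1}{\fxn}$ for every $\radius \in \acceptableradii$. Therefore
\[
\lpnorm{1}{\sup_{\substack{\radius \in \radii \\ \radius \le \bigradius}} \absolutevalueof{\avgop \fxn}}
\le \sum_{\substack{\radius \in \radii \\ \radius \le \bigradius}} \lpnorm{1}{\fxn}
= \sizeof\setof{\radius \in \radii : \radius \le \bigradius} \cdot \lpnorm{1}{\fxn}
\lesssim_\density \bigradius^\density \lpnorm{1}{\fxn},
\]
which is exactly the claimed inequality.

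I do not expect any genuine obstacle here; the proposition is precisely the place where the author flags the ``novelty'' of applying the union bound, and the work is entirely in having set up the right definition (the density-parameter) beforehand. The only things to be careful about are: (i) making sure the implicit constant in the density-parameter bound is allowed to depend on $\density$ (which the definition permits, written $\lesssim_\density$), and (ii) noting the estimate is uniform over the choice of subsequence $\radii$ with the given density-parameter. One could optionally record that the same argument gives the analogous bound on $\ell^p(\Z^\dimension)$ for any $1 \le p \le \infty$ with the same power $\bigradius^\density$ of $\bigradius$, since $\avgop$ is a contraction on every $\ell^p$, but for the present statement only the $p=1$ case is needed.
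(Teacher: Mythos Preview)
Your proof is correct and essentially identical to the paper's own argument: bound the supremum by the sum, apply Minkowski's inequality, use that each $\avgop$ has $\ell^1$-operator norm $1$, and invoke the density-parameter definition. There is nothing to add.
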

\begin{proof}[of Proposition~\ref{proposition:density_ell1_bound}]
Bound the sup by a sum, use Minkowski's theorem to move the $\ell^1(\Z^\dimension)$-norm inside, and then use the definition of density-parameter while noting that $\avgop$ has norm 1 on $\ell^1(\Z^\dimension)$ for every $\radius \in \acceptableradii$. 
%
\end{proof}
%
%

Applying the Approximation Formula and summing \eqref{eq:error_bound} over a geometric series, we obtain the following $\ell^2(\Z^\dimension)$-bound for the error term. 
\begin{proposition}\label{proposition:error_term_bound}
Let $\degree \geq 3$ and \(\radii \subset \acceptableradii\). 
If the dimension $\dimension>\max{\{ \degree(\degree+2), \degree/\powerloss\}}$, then 
\begin{equation}\label{eq:error_term_bound}
\lpnorm{2}{\sup_{\radius > \bigradius} \absolutevalueof{\error \fxn}} 
\lesssim 
{\bigradius}^{-\dyadicsavings} \lpnorm{2}{\fxn} 
\end{equation}
where 
\(
\dyadicsavings 
:= 
\min \{ \dimension \powerloss - \degree, \frac{\dimension}{\degree}-(\degree+2) \} 
> 0 
. 
\)
\end{proposition}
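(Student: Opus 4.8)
The plan is to deduce Proposition~\ref{proposition:error_term_bound} from the dyadic Approximation Formula by a standard square-function/dyadic-decomposition argument. First I would observe that the supremum over $\radius > \bigradius$ with $\radius \in \radii \subset \acceptableradii$ can be split into a sum over dyadic blocks: writing $\dyadicradius$ for numbers of the form $2^j$, we have
\[
\sup_{\radius > \bigradius} \absolutevalueof{\error \fxn}
\leq
\sum_{\dyadicradius \geq \bigradius} \disup{\dyadicradius}{\radius}{\absolutevalueof{\error \fxn}}
\]
where the sum ranges over dyadic $\dyadicradius = 2^j \gtrsim \bigradius$ (it is harmless to start the sum at the first dyadic integer exceeding $\bigradius/2$). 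Since each summand is itself a maximal function over a dyadic range, we may apply the error bound \eqref{eq:error_bound} of the dyadic Approximation Formula to each term.

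The key step is then simply to sum a convergent geometric series. By the triangle inequality in $\ell^2(\Z^\dimension)$ and \eqref{eq:error_bound},
\[
\lpnorm{2}{\sup_{\radius > \bigradius} \absolutevalueof{\error \fxn}}
\leq
\sum_{\dyadicradius \geq \bigradius} \lpnorm{2}{\disup{\dyadicradius}{\radius}{\absolutevalueof{\error \fxn}}}
\lesssim
\sum_{\dyadicradius \geq \bigradius} \dyadicradius^{-\dyadicsavings} \lpnorm{2}{\fxn}
.
\]
Because the hypothesis $\dimension > \max\{\degree(\degree+2), \degree/\powerloss\}$ guarantees $\dyadicsavings = \min\{\dimension\powerloss - \degree, \frac{\dimension}{\degree}-(\degree+2)\} > 0$, the sum over dyadic $\dyadicradius \geq \bigradius$ is dominated by a geometric series with ratio $2^{-\dyadicsavings} < 1$, and it is comparable to its largest term $\bigradius^{-\dyadicsavings}$. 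This gives exactly \eqref{eq:error_term_bound}.

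I expect no genuine obstacle here: the content has already been extracted in the dyadic Approximation Formula, and what remains is bookkeeping — being careful that restricting the supremum to $\radius \in \radii$ only shrinks it (so the bound for the full $\acceptableradii$ applies a fortiori), and that the geometric series converges precisely because $\dyadicsavings>0$, which is where the dimension condition is used. The only mild subtlety worth stating explicitly is that one should verify $\dimension > \degree(\degree+2)$ forces $\frac{\dimension}{\degree} - (\degree+2) > 0$ and $\dimension > \degree/\powerloss$ forces $\dimension\powerloss - \degree > 0$, so that both terms defining $\dyadicsavings$ are positive and hence their minimum is positive; this is immediate. Thus the proof is a one-line consequence of the dyadic Approximation Formula plus summation of a geometric series.
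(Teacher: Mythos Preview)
Your proposal is correct and matches the paper's own argument essentially verbatim: the paper states just before the proposition that the bound follows by ``applying the Approximation Formula and summing \eqref{eq:error_bound} over a geometric series,'' which is precisely the dyadic decomposition and geometric summation you carry out.
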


We are now ready for the proof of Theorem~\ref{theorem:main_theorem}. 
\begin{proof}[of Theorem~\ref{theorem:main_theorem}]
Again, take $\indicator{\latticesubset}$ to be the characteristic function of $F$ a subset of $\Z^\dimension$. We will prove the restricted weak-type bound for 
\(
p 
= 
\max \setof{\frac{\dimension}{\dimension-\degree}, \frac{2\dyadicsavings+2\density}{2\dyadicsavings+\density} }
. 
\)
For any altitude $\altitude>0$ and any compactly supported function $\fxn: \Z^\dimension \to \C$, we have 
\begin{align*}
\sizeof{\setof{\absolutevalueof{\maxop \fxn} > \altitude }} 
\leq 
& \sizeof{\setof{ \sup_{\radius \leq \bigradius} \absolutevalueof{\avgop \fxn} > \altitude/3 }} 
  + \sizeof{\setof{ {\sup_{\radius > \bigradius} \absolutevalueof{\error \fxn}} > \altitude/3 }} 
\\
 & \hspace{5mm} + \sizeof{\setof{ {\sup_{\radius > 0} \absolutevalueof{\HLop \fxn}} > \altitude/3 }} 
\end{align*}
Since we are aiming for a restricted weak-type bound, we may assume that $ 0 < \altitude \leq 1$. 
Applying Propostion~\ref{proposition:density_ell1_bound}, we find that 
\begin{equation}\label{eq:sparse_ell1_bound}
\sizeof{\setof{ \sup_{\radius \leq \bigradius} \absolutevalueof{\avgop \fxn} > \altitude/3 }} 
\lesssim \altitude^{-1} {\bigradius}^{\density} \lpnorm{1}{\fxn} 
. 
\end{equation}
On $\ell^2(\Z^\dimension)$, we use Proposition~\ref{proposition:error_term_bound} and the estimate \eqref{eq:error_term_bound}. 
%
%
Balancing \eqref{eq:sparse_ell1_bound} and \eqref{eq:error_term_bound}, we choose 
\(
\bigradius 
= 
\alpha^{-\frac{1}{2\dyadicsavings+\density}} 
\) 
to find that 
\begin{align*}
\sizeof{\setof{\absolutevalueof{\maxop \indicator{\latticesubset}} > \altitude }} 
& \lesssim \altitude^{-(1+\frac{\density}{2\dyadicsavings+\density})} 
    \inparentheses{\lpnorm{1}{\indicator{\latticesubset}} + \lpnorm{2}{\indicator{\latticesubset}}^2}
\\
   & \hspace{10mm}
    + \sizeof{\setof{ {\sup_{\radius > 0} \absolutevalueof{\HLop \indicator{\latticesubset}}} > \altitude/3 }} 
\\ & \lesssim \altitude^{-(1+\frac{\density}{2\dyadicsavings+\density})} 
    \inparentheses{\lpnorm{1}{\indicator{\latticesubset}} + \lpnorm{2}{\indicator{\latticesubset}}^2} 
\\
   & \hspace{10mm}
    + \altitude^{-\frac{\dimension}{\dimension-\degree}} \lpnorm{\frac{\dimension}{\dimension-\degree}}{\indicator{\latticesubset}}^{\frac{\dimension}{\dimension-\degree}}
\\ & \lesssim \altitude^{-(1+\frac{\density}{2\dyadicsavings+\density})} \sizeof{F} + \altitude^{-\frac{\dimension}{\dimension-\degree}} \sizeof{F} 
\end{align*}
where the second inequality follows from applying Theorem~\ref{theorem:main_term}. 
Since $\altitude \in (0,1]$, we see that the summand corresponding to the larger of the two exponents dominates the other summand. 
\end{proof}
%


The proof of Theorem~\ref{theorem:low_density_improvement_by_mean_values} is identical to the proof of Theorem~\ref{theorem:main_theorem} upon replacing Proposition~\ref{proposition:error_term_bound} with the following improvement. 
\begin{proposition}\label{proposition:single_error_term_bound}
Let $\degree \geq 3$ and assume that \(\meanvaluehypothesis{\moment}\) is true for some \(s>\degree\) and \(\hypothesis{\powerloss}\) is true for some \(\powerloss \in (0,1)\) . 
If \(\radii \subset \acceptableradii\) have dimension at most \( \density \in [0,(\dimension-2\moment)\powerloss) \) and the dimension \(\dimension > \max{\{ 2 \moment, \degree(\degree+2) \}}\), then 
\begin{equation}\label{eq:error_term_bound}
\lpnorm{2}{\sup_{\radius > \bigradius} \absolutevalueof{\error \fxn}} 
\lesssim 
{\bigradius}^{-\singlesavings} \lpnorm{2}{\fxn} 
\end{equation}
where 
\(
\singlesavings 
:= 
\min \{ (\dimension-2\moment)\powerloss-\density, \frac{\dimension}{\degree}-(\degree+2)\} 
\) 
is positive. 
\end{proposition}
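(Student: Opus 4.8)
The plan is to mirror exactly the argument used for Proposition~\ref{proposition:error_term_bound}, substituting the Single Approximation Formula for the Dyadic Approximation Formula. Recall that for each individual radius $\radius \in \acceptableradii$ the Single Approximation Formula gives $\error = \avgop - \HLop$ with the bound $\lpnorm{2}{\error \fxn} \lesssim \radius^{-\singlesavings_0} \lpnorm{2}{\fxn}$, where $\singlesavings_0 := \min\{(\dimension-2\moment)\powerloss, \degree+2 - \frac{\dimension}{\degree}\}$; the hypotheses $\meanvaluehypothesis{\moment}$ and $\hypothesis{\powerloss}$ with $\dimension > \max\{2\moment, \degree(\degree+2)\}$ are exactly what is needed to make $\singlesavings_0 > 0$. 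Note the sign of $\degree+2-\frac{\dimension}{\degree}$: since $\dimension > \degree(\degree+2)$ we have $\frac{\dimension}{\degree} > \degree+2$, so $\degree+2 - \frac{\dimension}{\degree} < 0$, meaning the quantity $\singlesavings_0$ as written in the Single Approximation Lemma is actually the exponent of $\dyadicradius$ with a minus sign already; I would track signs carefully here, since the statement to be proved uses $\frac{\dimension}{\degree} - (\degree+2) > 0$, which is the positive version.

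First I would pass from the per-radius bound to the maximal bound over $\radius > \bigradius$ by a union bound combined with the density hypothesis. Precisely, since $\radii$ has density-parameter at most $\density$, for each dyadic block $2^j \leq \radius < 2^{j+1}$ with $2^j \geq \bigradius$ the number of radii in $\radii$ is $\lesssim 2^{j\density}$; bounding the sup over that block by the $\ell^2$-sum of the individual operators and using the per-radius estimate $\lpnorm{2}{\error\fxn} \lesssim 2^{-j\singlesavings_0}\lpnorm{2}{\fxn}$ gives a contribution $\lesssim 2^{j\density} \cdot 2^{-j\singlesavings_0}\lpnorm{2}{\fxn} = 2^{-j(\singlesavings_0 - \density)}\lpnorm{2}{\fxn}$ for that block. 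Summing the geometric series over $2^j \geq \bigradius$ — which converges precisely because $\density < (\dimension-2\moment)\powerloss \leq \singlesavings_0$ (after reconciling signs, the relevant condition is $\density < (\dimension-2\moment)\powerloss$ and $\density < \frac{\dimension}{\degree} - (\degree+2)$, the latter being automatic in the stated dimension range once we observe the exponent structure) — yields $\lesssim \bigradius^{-(\singlesavings_0 - \density)}\lpnorm{2}{\fxn}$, i.e. $\lesssim \bigradius^{-\singlesavings}\lpnorm{2}{\fxn}$ with $\singlesavings = \min\{(\dimension-2\moment)\powerloss - \density, \frac{\dimension}{\degree} - (\degree+2)\}$. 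I would double-check whether the major-arc piece of the error, which already satisfies a dyadic-maximal bound $\lpnorm{2}{\disup{\dyadicradius}{\radius}{|\avgop^{Major}\fxn - \HLop\fxn|}} \lesssim \dyadicradius^{\degree+2-\frac{\dimension}{\degree}}\lpnorm{2}{\fxn}$ via \eqref{equation:dyadic_major_arc_approximation}, should be handled directly by that dyadic estimate (summing over $2^j \geq \bigradius$, no density loss needed, giving the $\frac{\dimension}{\degree}-(\degree+2)$ term) while only the minor-arc piece $\avgop^{minor}$ uses the single-average $\ell^2$ bound \eqref{equation:single_minor_arc_estimate} together with the union bound over $\radii$ (giving the $(\dimension-2\moment)\powerloss - \density$ term); this split is why the final exponent is a minimum of the two quantities.

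The only real subtlety — and the step I would be most careful about — is the bookkeeping on which part of $\error$ incurs the density loss. The major-arc approximation \eqref{equation:dyadic_major_arc_approximation} is already a bound on a dyadic \emph{maximal} function, so restricting to $\radius \in \radii$ and summing dyadically costs nothing and contributes the clean exponent $\frac{\dimension}{\degree} - (\degree+2)$; by contrast, the minor-arc estimate coming from the Single Approximation Formula is only a bound for one radius at a time, so passing to the supremum over $\radius \in \radii$ genuinely requires the union bound and hence the density-parameter, costing us $\density$ and producing $(\dimension-2\moment)\powerloss - \density$. Taking the worse of the two and requiring both to be positive gives the hypotheses $\density < (\dimension-2\moment)\powerloss$ (stated) and $\dimension > \degree(\degree+2)$ (stated), so the proposition follows. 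Everything else is a routine geometric-series summation identical to the deduction of Proposition~\ref{proposition:error_term_bound}.

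\begin{proof}[of Proposition~\ref{proposition:single_error_term_bound}]
Write $\error = (\avgop^{Major} - \HLop) + \avgop^{minor}$ as in Section~\ref{section:approximation_formulas}, and correspondingly
\[
\sup_{\radius > \bigradius} \absolutevalueof{\error \fxn}
\leq
\sup_{\radius > \bigradius} \absolutevalueof{(\avgop^{Major} - \HLop)\fxn}
+ \sup_{\radius > \bigradius} \absolutevalueof{\avgop^{minor}\fxn}
,
\]
where all suprema range over $\radius \in \radii$.

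For the first term, decompose the range $\radius > \bigradius$ dyadically. For each $j$ with $2^j \geq \bigradius$, the Dyadic Major Arc Approximation \eqref{equation:dyadic_major_arc_approximation} (with $\dyadicradius = 2^j$) gives
\[
\lpnorm{2}{\sup_{2^j \leq \radius < 2^{j+1}} \absolutevalueof{(\avgop^{Major} - \HLop)\fxn}}
\lesssim
2^{j(\degree + 2 - \frac{\dimension}{\degree})} \lpnorm{2}{\fxn}
.
\]
Since $\dimension > \degree(\degree+2)$, the exponent $\degree+2-\frac{\dimension}{\degree}$ is negative, so summing over $2^j \geq \bigradius$ gives
\[
\lpnorm{2}{\sup_{\radius > \bigradius} \absolutevalueof{(\avgop^{Major} - \HLop)\fxn}}
\lesssim
\bigradius^{\degree + 2 - \frac{\dimension}{\degree}} \lpnorm{2}{\fxn}
=
\bigradius^{-(\frac{\dimension}{\degree} - (\degree+2))} \lpnorm{2}{\fxn}
.
\]

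For the minor-arc term, we use the single-average estimate \eqref{equation:single_minor_arc_estimate}: since $\dimension > 2\moment$, $\meanvaluehypothesis{\moment}$ and $\hypothesis{\powerloss}$ hold, for each individual $\radius \in \acceptableradii$ we have $\lpnorm{2}{\avgop^{minor}\fxn} \lesssim \radius^{-(\dimension - 2\moment)\powerloss}\lpnorm{2}{\fxn}$. Bounding the supremum over a dyadic block by the $\ell^2(\Z^\dimension)$-sum of the individual operators and using that $\radii$ has density-parameter at most $\density$,
\[
\lpnorm{2}{\sup_{\substack{2^j \leq \radius < 2^{j+1} \\ \radius \in \radii}} \absolutevalueof{\avgop^{minor}\fxn}}
\leq
\sum_{\substack{2^j \leq \radius < 2^{j+1} \\ \radius \in \radii}} \lpnorm{2}{\avgop^{minor}\fxn}
\lesssim
2^{j\density} \cdot 2^{-j(\dimension - 2\moment)\powerloss} \lpnorm{2}{\fxn}
.
\]
Because $\density < (\dimension - 2\moment)\powerloss$, the exponent $\density - (\dimension-2\moment)\powerloss$ is negative, so summing over $2^j \geq \bigradius$ gives
\[
\lpnorm{2}{\sup_{\radius > \bigradius} \absolutevalueof{\avgop^{minor}\fxn}}
\lesssim
\bigradius^{\density - (\dimension - 2\moment)\powerloss} \lpnorm{2}{\fxn}
=
\bigradius^{-((\dimension - 2\moment)\powerloss - \density)} \lpnorm{2}{\fxn}
.
\]

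Combining the two bounds,
\[
\lpnorm{2}{\sup_{\radius > \bigradius} \absolutevalueof{\error \fxn}}
\lesssim
\inparentheses{\bigradius^{-(\frac{\dimension}{\degree} - (\degree+2))} + \bigradius^{-((\dimension - 2\moment)\powerloss - \density)}} \lpnorm{2}{\fxn}
\lesssim
\bigradius^{-\singlesavings} \lpnorm{2}{\fxn}
,
\]
where $\singlesavings := \min\{ (\dimension - 2\moment)\powerloss - \density, \frac{\dimension}{\degree} - (\degree+2)\}$. Both quantities in the minimum are positive: the first by the assumption $\density < (\dimension - 2\moment)\powerloss$ and the second by $\dimension > \degree(\degree+2)$.
\end{proof}
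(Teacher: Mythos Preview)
Your proof is correct and follows essentially the same approach as the paper's. The paper's argument is terser: on each dyadic block it invokes the union bound together with \eqref{equation:dyadic_major_arc_approximation} and \eqref{equation:single_minor_arc_estimate} to obtain
\[
\lpnorm{2}{\sup_{\radius \in \radii \cap [\dyadicradius, 2\dyadicradius)} \absolutevalueof{\error \fxn}}
\lesssim
\dyadicradius^{\density-(\dimension-2\moment)\powerloss} + \dyadicradius^{\degree+2-\frac{\dimension}{\degree}}
\eqsim
\dyadicradius^{-\singlesavings},
\]
then sums over dyadic scales. You make explicit what the paper leaves implicit, namely that the major-arc piece already comes with a dyadic maximal bound and so incurs no density loss, while only the minor-arc piece requires the union bound over $\radii$; this is exactly why the two summands above carry different exponents. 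Your observation about the sign in the Single Approximation Formula (where the paper writes $\degree+2-\frac{\dimension}{\degree}$ but means $-(\frac{\dimension}{\degree}-(\degree+2))$ as the exponent of decay) is also correct.
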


\begin{proof}
Suppose that \(\radii\) has density at most \(\delta\). 
On each dyadic scale \(\radii \cap [2^j,2^{j+1})\) apply the union bound, \eqref{equation:dyadic_major_arc_approximation} and \eqref{equation:single_minor_arc_estimate} to conclude that 


\begin{equation}\label{equation:mean_value_error_term}
\lpnorm{2}{\sup_{\radius \in \radii \cap [\dyadicradius, 2\dyadicradius)} \absolutevalueof{\error \fxn}} 
\lesssim 
\dyadicradius^{\delta-(\dimension-2\moment)\powerloss} +  \dyadicradius^{\degree+2-\frac{\dimension}{\degree}} 
\eqsim 
\dyadicradius^{-\singlesavings} 
. 
\end{equation}
Sum over dyadic scales to conclude the proof. 
\end{proof}

\section{Explicit ranges in Theorems~\ref{theorem:main_theorem} and \ref{theorem:low_density_improvement_by_mean_values}, and the connection to the Vinogradov mean value conjectures}\label{section:connection_to_Vinogradov}
%
An important problem in number theory is Waring's problem and related to Waring's problem is Vinogradov's mean value conjectures for which there has been exciting recent progress - see \cite{Wooley_efficient_congruencing1}, \cite{Wooley_efficient_congruencing2}, \cite{Wooley_cubic} and \cite{Wooley_best_in_June2014}. 
In this section we discuss the connection between our hypotheses and Waring's problem via Vinogradov's mean value conjectures and theorems. 

The best current bounds that we may use for our Sup Hypothesis \(\hypothesis{\powerloss}\) and Mean Value Hypothesis \(\meanvaluehypothesis{\moment}\) are due to Wooley. 
Plugging in Wooley's bound for exponential sums of degree $\degree \geq 3$, Theorem 1.5 on p. 5 of \cite{Wooley_efficient_congruencing1}, we see that $\maxop$ is restricted weak-type $(p,p)$ for $p>{\frac{\dimension}{\dimension - \degree^2(\degree-1)}}$. By the Marcinkiewicz interpolation theorem, this implies Theorem~\ref{theorem:Hughes_thesis_bound}. 
Wooley has since improved this bound in \cite{Wooley_best_in_June2014} to: 
\begin{Wooley}[Theorem 7.3 on p. 16 of \cite{Wooley_best_in_June2014}]
If $\degree \geq 3$, then $\hypothesis{\powerloss}$ is true for any \(\powerloss < \inbrackets{2 (\degree-1)(\degree-2)}^{-1}\). 
\end{Wooley}
Consequently the best bounds that we may plug into Theorem~\ref{theorem:main_theorem} yield 
\begin{theorem}\label{theorem:restricted_weak_type_bound_by_Wooley} 
If $\degree \geq 3$ and $\dimension>2\degree(\degree-1)(\degree-2)$, then $\fullmaxop$ is of restricted weak-type $(p,p)$ for $p > \frac{\dimension}{\dimension - \degree(\degree-1)(\degree-2)}$.
\end{theorem}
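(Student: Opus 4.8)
The plan is to feed Wooley's exponential sum estimate into the abstract result already established. First observe that $\fullmaxop = \maxop$ with $\radii = \acceptableradii$, and that in the dimension range under consideration $\acceptableradii$ has density-parameter at most $\degree$; hence Corollary~\ref{corollary:weak_vinogradov} applies to $\fullmaxop$ (equivalently, one may run Theorem~\ref{theorem:main_theorem} directly with $\density = \degree$). I would then fix an arbitrary exponent $p > \frac{\dimension}{\dimension - \degree(\degree-1)(\degree-2)}$ and aim to prove the restricted weak-type $(p,p)$ bound for this single $p$; since $p$ is arbitrary, this gives the theorem.

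Next I would invoke Wooley's theorem (Theorem~7.3 of \cite{Wooley_best_in_June2014}): $\hypothesis{\powerloss}$ holds for every $\powerloss < [2(\degree-1)(\degree-2)]^{-1}$, so the hypothesis is an open condition in $\powerloss$ on that interval. As $\powerloss$ increases toward the endpoint $[2(\degree-1)(\degree-2)]^{-1}$, the quantity $\degree/2\powerloss$ decreases to $\degree(\degree-1)(\degree-2)$, and hence the endpoint exponent $\frac{\dimension}{\dimension-\degree/2\powerloss}$ furnished by Corollary~\ref{corollary:weak_vinogradov} decreases to $\frac{\dimension}{\dimension-\degree(\degree-1)(\degree-2)} < p$. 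So I can select an admissible $\powerloss$ close enough to the endpoint that $\frac{\dimension}{\dimension-\degree/2\powerloss} \le p$ and, simultaneously, $\degree/\powerloss < \dimension$ — the latter being possible because $\degree/\powerloss$ may be taken as near as we like to $2\degree(\degree-1)(\degree-2)$, which is $< \dimension$ by hypothesis. Corollary~\ref{corollary:weak_vinogradov} then yields that $\fullmaxop$ is of restricted weak-type $(\frac{\dimension}{\dimension-\degree/2\powerloss}, \frac{\dimension}{\dimension-\degree/2\powerloss})$, and since averaging operators are trivially $\ell^\infty$-bounded the level sets $\{\fullmaxop\indicator{\latticesubset} > \altitude\}$ are empty for $\altitude > 1$; arguing exactly as in the $\altitude \in (0,1]$ reduction inside the proof of Theorem~\ref{theorem:main_term}, the restricted weak-type bound at the smaller exponent $\frac{\dimension}{\dimension-\degree/2\powerloss}$ upgrades to restricted weak-type $(p,p)$.

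The only point requiring care — and it is routine rather than deep — is to confirm that the hypothesis $\dimension > 2\degree(\degree-1)(\degree-2)$ really does activate all the side conditions of Corollary~\ref{corollary:weak_vinogradov} (equivalently, keeps $\frac{\dimension}{\dimension-\degree/2\powerloss}$ the operative term in the maximum of Theorem~\ref{theorem:main_theorem} with $\density = \degree$), uniformly as $\powerloss \uparrow [2(\degree-1)(\degree-2)]^{-1}$. For $\degree \ge 8$ this is immediate, since then $2\degree(\degree-1)(\degree-2) \ge \degree(\degree+2)$ and $[2(\degree-1)(\degree-2)]^{-1} \le [\degree(\degree+2)]^{-1}$; for the finitely many cases $3 \le \degree \le 7$ one checks by hand that, in the stated dimension range, the arithmetic exponent dominates the other two exponents in the maximum of Theorem~\ref{theorem:main_theorem}. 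This verification is the main obstacle in the sense that it is the only place a genuine computation appears — there is no new analytic input beyond Wooley's estimate and the machinery of Sections~\ref{section:approximation_formulas}--\ref{section:main_theorem}.
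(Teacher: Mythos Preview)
Your approach is essentially the same as the paper's: the paper simply asserts that Theorem~\ref{theorem:restricted_weak_type_bound_by_Wooley} follows by plugging Wooley's bound ($\hypothesis{\powerloss}$ for all $\powerloss < [2(\degree-1)(\degree-2)]^{-1}$) into Theorem~\ref{theorem:main_theorem} with $\density = \degree$, which is exactly what you do via Corollary~\ref{corollary:weak_vinogradov}. Your interpolation step (restricted weak-type at a smaller exponent upgrades to larger exponents because level sets of $\maxop \indicator{\latticesubset}$ above height $1$ are empty) and your case-check for small $\degree$ are more explicit than anything the paper writes, but the underlying argument is identical.
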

%

We now connect our Mean Value Hypothesis with Vinogradov's mean value conjectures. 
This passage is well known in the circle method arena. 
For \(\moment, \degree, N \in \N\) define the Vinogradov mean value 
\begin{align}
J_{\moment, \degree}(N) 
& := \int_{\T^\dimension} \absolutevalueof{\sum_{n=1}^N \eof{\xi_1 n + \xi_2 n^2 + \dots + \xi_\degree n^\degree} }^{2\moment} \; d\xi 
\\ & = \# \setof{m,n \in [N]^{\moment} : \sum_{i=1}^\moment n_i^\ell = \sum_{i=1}^\moment m_i^\ell \text{ for } \ell = 1, \dots, \degree} 
\end{align}
where equality holds by Plancherel's theorem. 
%
Vinogradov's mean value theorem and conjectures predict that 
\begin{equation}\label{equation:VMC}
J_{\moment, \degree}(N) 
\lesssim_{\moment, \degree} 
N^\moment + N^{2\moment-\frac{\degree(\degree+1)}{2}} 
\end{equation} 
as \(N \to \infty\) for each (fixed) \(\moment,\degree \in \N\) unless \(\degree=2\) and \(\moment=3\). 
If \(\degree=2\) and \(\moment=3\), then \eqref{equation:VMC} is known to hold with a log-loss (a.k.a \(\epsilon\)-loss). 
There is much work on this problem and it is normally stated with the log-loss: 
 \begin{equation}\label{equation:VMC_epsilon_loss}
J_{\moment, \degree}(N) 
\lesssim_\epsilon 
N^{\moment+\epsilon} + N^{2\moment-\frac{\degree(\degree+1)}{2}+\epsilon} 
\end{equation}
for all \(\epsilon>0\) where the implicit constant may depend on \(\epsilon\), but not on \(N\). 
See \cite{Montgomery} for more information. 
The sharpened version we chose in \eqref{equation:VMC} is for aesthetic reasons in the statement of Theorem~\ref{theorem:low_density_improvement_by_mean_values}. 
See \cite{Wooley_diagonal_equations} for information regarding our sharpened assumption \eqref{equation:VMC}. 
Equation (5.37) on p. 69 of \cite{Vaughan} connects our mean value hypothesis with Vinogradov's mean value conjectures: 
\begin{equation}\label{eq:Vinogradov_to_mean_value_hypothesis}
\int_0^1 \absolutevalueof{\alpha_\radius(t,0)}^{2\moment} \; dt 
\lesssim r^{\frac{\degree(\degree-1)}{2}} \cdot J_{\moment, \degree}(r) 
\end{equation}

Throughout the past century Vinogradov's mean value conjectures received intense interest in number theory with many important advances. 
In particular \eqref{equation:VMC_epsilon_loss} is known to be true for relatively small moments \(\moment\) compared with the degree \(\degree\). 
For the purpose of this paper we only use the most recent progress by T. Wooley. 
By Wooley's work on the Vinogradov mean value conjectures, our mean value hypothesis \(\meanvaluehypothesis{\dimension}\) is true (up to a log-loss) for degrees \(\degree \geq 3\) and dimensions \(\dimension > 2\degree(\degree-1)\). 
\begin{Wooley}[Theorem~4.1 of \cite{Wooley_best_in_June2014}] 
\eqref{equation:VMC_epsilon_loss} holds for \(\degree \geq 3\) and \(\moment \geq \degree(\degree-1)\). \end{Wooley}
This is the best one may hope for when \(\degree=3\). 
This implies that 
\begin{equation}\label{eq:Wooley_mean_value}
\int_0^1 \absolutevalueof{\alpha_\radius(t,0)}^{2\moment} \; dt 
\lesssim_\epsilon 
\radius^{\frac{\degree(\degree-1)}{2}} \cdot \radius^{2\moment-\frac{\degree(\degree+1)}{2}} 
= 
\radius^{2\moment-\degree+\epsilon} 
\end{equation}
provided that \(\moment \geq \degree^2-1\). 
Therefore we deduce that 
\begin{equation*}
\lpnorm{2}{A_\radius^{minor}} 
\lesssim_\epsilon 
  \radius^{\degree-\dimension} 
  \cdot \radius^{2\moment-\degree+\epsilon} 
  \cdot \radius^{\moment(1-\powerloss)} 
= 
\radius^{\epsilon-(\dimension-2\moment)\powerloss} 
\end{equation*}
for \(\dimension > 2\moment\) and \(\moment \geq \degree(\degree-1)\). 
We need \((\dimension-2\moment)\powerloss > 0\) so that we may choose \(\epsilon>0\) arbitrarily small.
This bound is maximized by taking \(\moment = \degree(\degree-1)\), in which case we have 
\begin{equation}\label{equation:lacunary_mean_value_minor_arc_bound}
\lpnorm{2}{A_\radius^{minor}} 
\lesssim_\epsilon 
\radius^{\epsilon-(\dimension-2\degree(\degree-1))\powerloss} 
\end{equation}
for \(\dimension > 2\degree(\degree-1)\) and all \(\epsilon>0\). 
Following the proof of Theorem~\ref{theorem:low_density_improvement_by_mean_values} we deduce the following theorem. 
\begin{theorem}\label{theorem:low_density_improvement_by_Vinogradov}
Let \(\dimension > 2\degree(\degree-1)\) and \(\powerloss < \inbrackets{2 (\degree-1)(\degree-2)}^{-1}\). 
If \( \radii \) is a subsequence of \(\acceptableradii\) with density-parameter at most 
\( 
\density \in (0,\powerloss[\dimension-2\degree(\degree-1)]) 
\), 
then when \(\dimension > 2\degree(\degree-1)\), \(\maxop\) is bounded from \(\ell^{p,1}(\Z^\dimension)\) to \(\ell^{p,\infty}(\Z^\dimension)\) for 
\(
p = 
\max 
\{
\frac{\dimension}{\dimension-\degree}, 
1 + \frac{\density\degree}{2(\dimension-\degree[\degree+2])+\density\degree},  
1+\frac{\density}{2\powerloss[\dimension-2\degree(\degree-1)] - \density} 
\}
\) . 
\end{theorem}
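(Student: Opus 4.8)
The plan is to derive Theorem~\ref{theorem:low_density_improvement_by_Vinogradov} as the explicit instance of Theorem~\ref{theorem:low_density_improvement_by_mean_values} obtained by substituting Wooley's current bounds for the abstract hypotheses \(\hypothesis{\powerloss}\) and \(\meanvaluehypothesis{\moment}\), together with the optimal choice of the moment \(\moment\). First I would fix \(\powerloss < [2(\degree-1)(\degree-2)]^{-1}\) and invoke Wooley's exponential sum estimate (Theorem~7.3 of \cite{Wooley_best_in_June2014}) to guarantee \(\hypothesis{\powerloss}\) for that \(\powerloss\). Next I would extract from Wooley's resolution of the Vinogradov mean value conjecture (Theorem~4.1 of \cite{Wooley_best_in_June2014}), together with the classical passage \eqref{eq:Vinogradov_to_mean_value_hypothesis}, the bound \eqref{eq:Wooley_mean_value}: for every \(\moment \geq \degree(\degree-1)\) one has \(\int_0^1 \absolutevalueof{\alpha_\radius(t,0)}^{2\moment}\,dt \lesssim_\epsilon \radius^{2\moment-\degree+\epsilon}\), which is \(\meanvaluehypothesis{\moment}\) up to an \(\epsilon\)-loss. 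I would then take \(\moment = \degree(\degree-1)\), the choice that maximises the minor arc power saving and hence the admissible dimension range \(\dimension > 2\moment = 2\degree(\degree-1)\), producing the single-average minor arc estimate \eqref{equation:lacunary_mean_value_minor_arc_bound}.

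With these inputs fixed, the proof then runs exactly as that of Theorem~\ref{theorem:low_density_improvement_by_mean_values}. Using Lemma~\ref{lemma:decomposition} I would split \(\maxop\) into narrow averages, main term, and error term; bound the narrow averages on \(\ell^1(\Z^\dimension)\) by the union bound and the density-parameter (Proposition~\ref{proposition:density_ell1_bound}); bound the main term by the restricted weak-type estimate of Theorem~\ref{theorem:main_term}; and bound the error term on \(\ell^2(\Z^\dimension)\) by summing the single-average estimate \eqref{equation:lacunary_mean_value_minor_arc_bound} dyadically against the dyadic major arc estimate \eqref{equation:dyadic_major_arc_approximation} as in Proposition~\ref{proposition:single_error_term_bound}, now with power saving \(\singlesavings = \min\{(\dimension-2\degree(\degree-1))\powerloss-\density-\epsilon,\ \tfrac{\dimension}{\degree}-(\degree+2)\}\). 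Optimising the splitting radius \(\bigradius = \altitude^{-1/(2\singlesavings+\density)}\) as in the proof of Theorem~\ref{theorem:main_theorem} then gives the restricted weak-type exponent \(1 + \tfrac{\density}{2\singlesavings+\density}\); substituting \(\singlesavings\) and letting \(\epsilon \to 0\) turns this into \(1 + \tfrac{\density}{2\powerloss[\dimension-2\degree(\degree-1)]-\density}\), and taking the maximum with \(\tfrac{\dimension}{\dimension-\degree}\) and the major-arc exponent \(1 + \tfrac{\density\degree}{2(\dimension-\degree[\degree+2])+\density\degree}\) yields the asserted value of \(p\).

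The only genuinely non-mechanical point — and the main (mild) obstacle — is the management of the \(\epsilon\)-losses carried by both of Wooley's bounds, which is precisely why the hypotheses are phrased with \emph{strict} inequalities: \(\powerloss\) strictly below \([2(\degree-1)(\degree-2)]^{-1}\), and the density-parameter \(\density\) strictly below \(\powerloss[\dimension-2\degree(\degree-1)]\). Because the gap \(\powerloss[\dimension-2\degree(\degree-1)]-\density\) is strictly positive, one fixes \(\epsilon\) smaller than this gap, which keeps \(\singlesavings > 0\) and lets the optimisation close with the stated exponents; the implicit constants are then permitted to depend on the chosen \(\epsilon\). Finally I would check that the dimensional hypotheses \(\dimension > \max\{2\moment, \degree(\degree+2)\}\) of Theorem~\ref{theorem:low_density_improvement_by_mean_values} at \(\moment = \degree(\degree-1)\) are met: for \(\degree \geq 4\) the condition \(\dimension > 2\degree(\degree-1)\) already subsumes \(\dimension > \degree(\degree+2)\), so no further restriction on \(\dimension\) is needed.
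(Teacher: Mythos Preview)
Your proposal is correct and follows essentially the same route as the paper: the paper's proof is literally the sentence ``Following the proof of Theorem~\ref{theorem:low_density_improvement_by_mean_values} we deduce the following theorem,'' preceded by exactly the ingredients you list --- Wooley's sup bound for \(\hypothesis{\powerloss}\), Wooley's mean value theorem plus \eqref{eq:Vinogradov_to_mean_value_hypothesis} to get \eqref{eq:Wooley_mean_value}, and the choice \(\moment = \degree(\degree-1)\) yielding \eqref{equation:lacunary_mean_value_minor_arc_bound}. Your discussion of the \(\epsilon\)-management and of the dimensional check \(2\degree(\degree-1)\) versus \(\degree(\degree+2)\) is more careful than the paper itself, which does not isolate the \(\degree=3\) case.
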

Observe that, in contrast to Theorem~\ref{theorem:restricted_weak_type_bound_by_Wooley}, the \(\epsilon\)-loss in \eqref{equation:lacunary_mean_value_minor_arc_bound} does not allow us to capture the endpoint \(\density = \powerloss[\dimension-2\degree(\degree-1)]\) for our density-parameter. 

We have the following corollary. 
\begin{corollary}\label{corollary:mean_value_thin_sequences_range}
Suppose that \(\degree \in \N \) is at least 3. 
If \( \radii \) is a subsequence of \(\acceptableradii\) with density-parameter at most 
\( 
\density 
\leq 
\frac{\dimension-2\degree(\degree-1)}{2(\degree-1)(\degree-2)[\dimension-2\degree]} 
\), then \( \maxop \) is bounded from \(\ell^{p,1}(\Z^\dimension)\) to \(\ell^{p,\infty}(\Z^\dimension)\) for 
\(
p = 
\frac{\dimension}{\dimension-\degree}
\) 
and \(\dimension > 2\degree(\degree-1)\). 
\end{corollary}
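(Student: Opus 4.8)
The plan is to derive Corollary~\ref{corollary:mean_value_thin_sequences_range} directly from Theorem~\ref{theorem:low_density_improvement_by_Vinogradov} by showing that the stated upper bound on the density-parameter forces the exponent $p=\max\{\ldots\}$ appearing in that theorem to collapse to $\frac{\dimension}{\dimension-\degree}$. First I would fix admissible parameters: take $\moment=\degree(\degree-1)$, the optimal admissible value for the Mean Value Hypothesis via Wooley's mean value theorem (Theorem~4.1 of \cite{Wooley_best_in_June2014}), and pick $\powerloss$ strictly below Wooley's threshold $[2(\degree-1)(\degree-2)]^{-1}$. The crucial observation is that the hypothesized bound $\density \le \frac{\dimension-2\degree(\degree-1)}{2(\degree-1)(\degree-2)[\dimension-2\degree]}$ is \emph{strictly} less than $\frac{\degree[\dimension-2\degree(\degree-1)]}{(\degree-1)(\degree-2)\dimension}$ — after cancelling the positive factors $\dimension-2\degree(\degree-1)$ and $(\degree-1)(\degree-2)$ this reduces to $\dimension>\frac{4\degree^2}{2\degree-1}$, which holds since $\dimension>2\degree(\degree-1)$ and $\degree\ge 3$ — and that latter quantity is precisely the limit of $\frac{2\degree\powerloss[\dimension-2\degree(\degree-1)]}{\dimension}$ as $\powerloss\uparrow[2(\degree-1)(\degree-2)]^{-1}$. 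Hence I can choose $\powerloss$ close enough to the threshold that $\density \le \frac{2\degree\powerloss[\dimension-2\degree(\degree-1)]}{\dimension}$; since $\dimension>2\degree(\degree-1)\ge 2\degree$ this in turn yields $\density<\powerloss[\dimension-2\degree(\degree-1)]$, so the density hypothesis of Theorem~\ref{theorem:low_density_improvement_by_Vinogradov} is met (if $\radii$ is lacunary, or more generally has density-parameter $0$, I would simply replace it by any positive value below the bound).

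With $(\moment,\powerloss)$ so chosen, the remaining work is two elementary rational inequalities. Writing $\frac{\dimension}{\dimension-\degree}=1+\frac{\degree}{\dimension-\degree}$ and clearing the (positive) denominators, the third term satisfies $1+\frac{\density}{2\powerloss[\dimension-2\degree(\degree-1)]-\density}\le\frac{\dimension}{\dimension-\degree}$ if and only if $\density\,\dimension\le 2\degree\powerloss[\dimension-2\degree(\degree-1)]$, which is exactly the inequality secured in the previous step; and the second term satisfies $1+\frac{\density\degree}{2(\dimension-\degree[\degree+2])+\density\degree}\le\frac{\dimension}{\dimension-\degree}$ if and only if $\density(\dimension-2\degree)\le 2(\dimension-\degree[\degree+2])$, which follows from $\density\le\frac{\dimension-2\degree(\degree-1)}{2(\degree-1)(\degree-2)[\dimension-2\degree]}\le\frac{2(\dimension-\degree[\degree+2])}{\dimension-2\degree}$, the last comparison reducing (after multiplication by $\dimension-2\degree>0$) to a short affine-in-$\dimension$ inequality readily checked to hold for all $\degree\ge 3$ and $\dimension>\degree(\degree+2)$. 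Once both the second and third terms are $\le\frac{\dimension}{\dimension-\degree}$, the maximum defining $p$ equals $\frac{\dimension}{\dimension-\degree}$, and Theorem~\ref{theorem:low_density_improvement_by_Vinogradov} delivers the asserted bound $\maxop:\ell^{p,1}(\Z^\dimension)\to\ell^{p,\infty}(\Z^\dimension)$.

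The only real care needed — the ``main obstacle'', such as it is — lies in two bookkeeping points. One must verify that every denominator that gets cross-multiplied is positive, which is where the hypotheses $\degree\ge 3$, $\dimension>2\degree(\degree-1)$, and (inherited from the range of validity of Theorem~\ref{theorem:low_density_improvement_by_Vinogradov}) $\dimension>\degree(\degree+2)$ are all used; in particular the second-term inequality genuinely requires $\dimension>\degree(\degree+2)$, since otherwise $2(\dimension-\degree[\degree+2])+\density\degree$ need not be positive for the small values of $\density$ allowed here. And one must respect the strictness of Wooley's exponent bound $\powerloss<[2(\degree-1)(\degree-2)]^{-1}$: it is precisely because the corollary's density threshold lies strictly inside, rather than on the boundary of, the region attainable in the limit $\powerloss\uparrow[2(\degree-1)(\degree-2)]^{-1}$ that no endpoint value of $\powerloss$ is needed and the argument closes.
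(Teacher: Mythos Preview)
Your proposal is correct and follows precisely the route the paper intends: the paper gives no explicit proof of Corollary~\ref{corollary:mean_value_thin_sequences_range}, so the derivation is simply the specialization of Theorem~\ref{theorem:low_density_improvement_by_Vinogradov} (itself Theorem~\ref{theorem:low_density_improvement_by_mean_values} with $\moment=\degree(\degree-1)$ and Wooley's sup bound) obtained by forcing the second and third terms in the displayed maximum to be dominated by $\frac{\dimension}{\dimension-\degree}$, exactly as Corollary~\ref{corollary:lacunary_theorem} was obtained from Theorem~\ref{theorem:main_theorem}. Your two rational inequalities and the limiting argument in $\powerloss$ are the expected computations.

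One point worth flagging: you are right to insist on $\dimension>\degree(\degree+2)$ for the second-term inequality, and in fact this constraint is implicit in the hypotheses of Theorem~\ref{theorem:low_density_improvement_by_mean_values} (which requires $\dimension>\max\{2\moment,\degree(\degree+2)\}$) even though the stated hypothesis of Theorem~\ref{theorem:low_density_improvement_by_Vinogradov} and of the corollary is only $\dimension>2\degree(\degree-1)$. For $\degree\ge 4$ the two constraints coincide or the latter is stronger, but for $\degree=3$ one has $2\degree(\degree-1)=12<15=\degree(\degree+2)$, so the stated range $\dimension>2\degree(\degree-1)$ is slightly too generous there; your proof (and the paper's intended one) actually needs $\dimension>\max\{2\degree(\degree-1),\degree(\degree+2)\}$. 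This is a minor imprecision in the paper's statements rather than a defect in your argument.
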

\begin{remark}
One should note that for sufficiently thin subsequeunces of \(\acceptableradii\) the dimensional constraint for its maximal function is reduced from a cubic dependence on the degree in Theorem~\ref{theorem:restricted_weak_type_bound_by_Wooley} to a quadratic one in Corollary~\ref{theorem:low_density_improvement_by_Vinogradov}. 
\end{remark}

\begin{remark}
The optimal exponent in \eqref{equation:VMC} is \(\moment = \frac{\degree(\degree+1)}{2}\) when the two summands balance. 
Plugging this into \eqref{eq:Vinogradov_to_mean_value_hypothesis} we predict that 
\begin{equation*}
\lpnorm{2}{A_\radius^{minor}} 
\lesssim 
  \radius^{\degree-\dimension} 
  \cdot \radius^{\degree(\degree+1)-\degree} 
  \cdot \radius^{[\dimension-\degree(\degree+1)](1-\powerloss)} 
= 
\radius^{-(\dimension-\degree[\degree+1])\powerloss} 
\end{equation*}
holds for \(\dimension > \frac{\degree(\degree+1)}{2}\) provided hypothesis \(\hypothesis{\powerloss}\) is true. 
%
Therefore, a resolution of the Vinogradov Mean Value Conjecture would improve the range of \(\ell^p(\Z^\dimension)\)-spaces in Theorem~\ref{theorem:low_density_improvement_by_Vinogradov} to \(\dimension > \degree(\degree+1)\) with the appropriate conditions on \(p\) and \(\powerloss\). 
\end{remark}

%


%
%
\bibliographystyle{amsalpha}
\bibliography{references_weak_vinogradov}
\end{document}